\crefname{algocf}{alg.}{algs.}
\Crefname{algocf}{Algorithm}{Algorithms}
\def\bs{\boldsymbol}
\def\g{\gamma}
\def\d{\delta}
\def\e{\epsilon}
\def\l{\lambda}
\def\s{\sigma}
\def\O{\Omega}
\def\ie{\textit{i.e., }}
\def\NN{\mathbb N}
\def\RR{\mathbb R}
\def\bf1{\mathbf 1}
\def\bsw{\boldsymbol w}
\def\bxi{\boldsymbol\xi}
\def\bXi{\boldsymbol\Xi}
\def\bmu{\boldsymbol\mu}
\def\bt{\boldsymbol\theta}
\def\btheta{\boldsymbol\theta}
\def\bTheta{\boldsymbol\Theta}
\def\bDelta{\boldsymbol\Delta}
\def\bY{\boldsymbol Y\!}
\def\bX{\boldsymbol X}
\def\bZ{\boldsymbol Z}
\def\bL{\boldsymbol L}
\def\bfA{\mathbf A}
\def\bfM{\mathbf M}
\def\bfQ{\mathbf Q}
\def\bfU{\mathbf U}
\def\bfI{\mathbf I}
\def\bfJ{\mathbf J}
\def\bfX{\mathbf X}
\def\bfY{\mathbf Y}
\def\bfPi{\mathbf\Pi}
\def\bfT{\mathbf\Theta}
\def\bu{\boldsymbol u}
\def\bm{\boldsymbol{m}}
\def\fcar{\mathds{1}}
\def\pinf{{+\infty}}
\def\esp{\mathbf E}
\def\prob{\mathbf P}
\def\calM{\mathcal M}
\def\calI{\mathcal I}
\def\simiid{\overset{\text{iid}}{\sim}}
\def\hat{\widehat}
\def\tilde{\widetilde}
\def\trace{\mathbf{tr}}
\def\sgn{{\rm sgn}}
\theoremstyle{plain}
\newtheorem{theorem}{Theorem}
\newtheorem{lemma}{Lemma}
\newtheorem{proposition}{Proposition}
\newtheorem*{theorem*}{Theorem}
\newtheorem*{lemma*}{Lemma}
\newtheorem*{proposition*}{Proposition}
\newtheorem*{corollary*}{Corollary}
\theoremstyle{remark}
\newtheorem{remark}{Remark}
\newtheorem*{remark*}{Remark}
\newtheorem*{note*}{Note}
\theoremstyle{definition}
\newtheorem*{definition*}{Definition}
\begin{document}

\begin{frontmatter}

\title{\LARGE Minimax estimation of a multidimensional linear functional in sparse Gaussian models and robust estimation of the mean}
\runtitle{Estimation of multidimensional linear functionals}

\begin{aug}
\author{\fnms{Olivier} \snm{Collier,}\ead[label=e3]{}}
\author{\fnms{Arnak S.} \snm{Dalalyan}\ead[label=e2]{arnak.dalalyan@ensae.fr}}

\runauthor{O.\ Collier and A.\ Dalalyan}

\affiliation{Modal'X, Universit\'e Paris-Nanterre and CREST, ENSAE}

\address{Modal'X, UPL, Univ Paris Nanterre, F92000 Nanterre France
~\\ ENSAE-CREST, 5 Avenue Le Chatelier, 91120 Palaiseau, France.}
\end{aug}

\begin{abstract}
We consider two problems of estimation in high-dimensional Gaussian models.
The first problem is that of estimating a linear functional of the means of
$n$ independent  $p$-dimensional Gaussian vectors, under the assumption that
most of these means are equal to zero. We show that, up to a logarithmic
factor, the minimax rate of estimation in squared Euclidean norm is
between $(s^2\wedge n) +sp$ and $(s^2\wedge np)+sp$. The estimator that attains
the upper bound being computationally demanding, we investigate suitable
versions of group thresholding estimators that are efficiently computable even
when the dimension and the sample size are very large. An interesting new phenomenon
revealed by this investigation is that the group thresholding leads to a
substantial improvement in the rate as compared to the element-wise thresholding.
Thus, the rate of the group
thresholding is $s^2\sqrt{p}+sp$, while the element-wise thresholding has an
error of order $s^2p+sp$. To the best of our knowledge, this is the first known setting in which leveraging the group structure leads to a polynomial
improvement in the rate.

The second problem studied in this work is the estimation of the common
$p$-dimensional mean of the inliers among $n$ independent Gaussian vectors.
We show that there is a strong analogy between this problem and the first one.
Exploiting it, we propose new strategies of robust estimation that are computationally tractable and have better rates of convergence than the other
computationally tractable robust (with respect to the presence of the
outliers in the data) estimators studied in the literature. However, this tractability comes with a loss of the minimax-rate-optimality in some regimes.

\end{abstract}

\begin{keyword}[class=MSC]
\kwd[Primary ]{62J05}
\kwd[; secondary ]{62G05}
\end{keyword}

\begin{keyword}
\kwd{Column-sparsity}
\kwd{Minimax estimation}
\kwd{Group-sparsity}
\kwd{Linear transformation}
\kwd{High-dimensional inference}
\kwd{Robust estimation}
\end{keyword}

\end{frontmatter}

\section{Introduction}

Linear functionals are of central interest in statistics. The problems of
estimating a function at given points, predicting the value of a future
observation, testing the validity of a hypothesis, finding a dimension
reduction  subspace are all examples of statistical inference on linear
functionals. The primary goal of this paper is to investigate the problem
of estimation of a particular form of linear functional defined as the sum
of the observed multidimensional signals. Although this problem is of
independent interest on its own, one of our motivations for studying it
is its tight relation with the problem of robust estimation.

Various aspects of the problem of estimation of a linear functional of an
unknown high-dimensional or even infinite-dimensional parameter were studied
in the literature, mostly focusing on the case of a functional taking real
values (as opposed to the vector valued functional considered in the present work).
Early results  for smooth functionals were obtained by \cite{Koshevnik}.
Minimax estimation of linear functionals over various classes and models
were thoroughly analyzed by \cite{donoho1987minimax, klemela2001,
Efromovich1994,Gol04,cai2004,cai2005, laurent2008,butucea2009,juditsky2009}.
There is also a vast literature on studying the problem of estimating quadratic
functionals \citep{DONOHO1990290,laurent2000,cai2006,BickelRitov}. Since the
estimators of (quadratic) functionals can be often used as test statistics, the
problem of estimating functionals has close relations with the problem of testing
that were successfully exploited in \citep{comminges2012, comminges2013,CD11c,
Lepski1999}. The problem of estimation of nonsmooth functionals was also tackled
in the literature, see \citep{cai2011}.

Some statistical problems related to  functionals of high-dimensional parameters 
under various types of sparsity constraints were recently addressed in several papers. The case of real valued
linear and quadratic functionals was studied by \cite{collier2017} and \cite{ColComTV}, focusing on the Gaussian sequence model. \cite{Gassiat} analyzed the problem
of the signal-to-noise ratio estimation in the linear regression model under
various assumptions on the design. In a companion paper of the present submission,
\cite{CollierDal3} considered the problem of a vector valued linear functional
estimation when the observations are drawn from a Poisson distribution. It turns
out that the result established in the present work for the group (hard and soft)
thresholding estimators are valid for the Poisson model as well, but it is not the
case for the results on the greedy estimator studied in \Cref{subsection_GSS}.

We first investigate the order of magnitude of the worst-case risk of 
three types of estimators of a linear functional: the greedy subset selection (GSS), 
the group (hard and soft) thresholding (GHT and GST) and the component-wise
thresholding (HT). We then establish a non-asymptotic lower bound on the minimax 
risk that shows its dependence on the three main parameters of the model: the
sample size $n$, the dimension $p$ and the (column-)sparsity $s$. This
lower bound implies that the greedy subset selection is minimax rate optimal
in the sparse regime $s= O(p\vee\sqrt{n})$, whereas the group thresholding
is minimax rate optimal in the super-sparse case $s= O(\sqrt{p})$. The advantage of the
group thresholding as compared to the greedy subset selection is that the former
is computationally efficient, whereas the latter is not. In all these 
considerations, we neglect logarithmic factors. \Cref{tab:1} summarizes 
our main contributions related to the problem of linear functional estimation.
\begin{table}[ht]
\begin{tabular}{c|c|c|cc}
\toprule
Estimator  & Risk Bound  & 
	Computationally  & Stated in &\\
  & (up to log factors) & 
	 efficient &  &\\	
\midrule
GSS & $sp + s^2\wedge np$ & No & \Cref{theorem_GSS} &\\
GHT & $sp + s^2\sqrt{p}\wedge np$ & Yes & \Cref{theorem_HT} &\\
GST & $sp + s^2\sqrt{p}\wedge np$ & Yes & \Cref{theorem_ST} &\\
HT & $sp + s^2p\wedge np$ & Yes & \Cref{theorem_BT} &\\
\midrule
Lower bound & $sp + s^2\wedge n$ & $-$ &\Cref{theorem_lowerbound} &\\
\bottomrule
\end{tabular}
\caption{A summary of our results related to the estimation of a 
linear functional. The risk is normalized by $\sigma^2$, the variance
of the noise, and the bounds of the second column hide logarithmic 
factors and multiplicative universal constants.}
\label{tab:1}
\end{table}

In particular, one can observe that the ratio of the worst-case risk of the 
group thresholding procedure and that of the component-wise thresholding
might be as small as  $O(p^{-1/2})$. To the best of our knowledge, this is the 
first known setting in which leveraging the group structure leads to
such an important improvement of the rate. In previous results, the improvement was
of at most logarithmic order. Another interesting remark is
that the group soft thresholding estimator we investigate here has a
data-dependent threshold\footnote{Although we do not have a formal proof of that, 
but all the computations we did make us believe that it is impossible to get such a small risk bound for the group
soft thresholding estimator based on a threshold that does not depend on
data.}. Finally, note that while the thresholding 
estimators are natural candidates for solving the problem under consideration
in the sparsity setting, the greedy subset selection is a new procedure
introduced in this paper to get the best known upper bound on the minimax
risk.

A second problem studied in this work is the robust estimation of the mean of
a Gaussian vector. As explained in forthcoming sections, this problem has close
relations to that of estimation of a linear functional. In order to explain this
relation, let us recall that one of the most popular mathematical framework for
analyzing robust estimators is the Huber contamination model \citep{huber1964}. 
It assumes that
there is a reference distribution $\prob_{\bmu}$, parameterized by $\bmu\in\mathcal M$, 
the precise value of which is unknown, and a contamination distribution $\bfQ$, 
which is completely unknown. The data points $Y_i$, $i=1,\ldots,n$ are independent random variables drawn from the mixture distribution $\mathbf P_{\epsilon,\bmu,\bfQ} = 
(1-\epsilon)\prob_{\bmu}+\epsilon\,\bfQ$, where $\epsilon\in[0,1]$ is the rate
of contamination. The goal is then to estimate the parameter $\bmu$, see the 
papers \citep{Gao2015,chen2016} for some recent results. This means that among
the $n$ observations, there are $s$ inliers drawn from $\prob_{\bmu}$  and
$(n-s)$ outliers drawn from $\bfQ$, all these observations being independent and
$s$ being a binomial random variable with parameters $n$ and $(1-\epsilon)$. 
Thus, the specificity of the model is that all the outliers are assumed to be 
drawn from the same distribution, $\bfQ$. 

We suggest here to consider an alternative model for the outliers. In the general
setting, it corresponds to considering the number of outliers, $s$, as a 
deterministic value and to assuming that the outliers $\{Y_i:i\in O\}$ (where
$O\subset [n]$ is of cardinality $s$) are independent and satisfy $Y_i\sim 
\prob_{\bmu_i}$. Thus, we do not assume in this model that the outliers are all generated by the same random mechanism. This model and the Huber model are two
different frameworks for assessing the quality of the estimators. It is quite
likely that in real world applications none of these two models are true. However, 
both of them are of interest for comparing various outlier-robust estimators and
investigating optimality properties.

To explain the connection between the robust estimation and the problem of estimation
of a linear functional, let us consider the contamination model of the previous paragraph. That is, we assume  that the observations $\bY_i$ are independent and drawn
from $\prob_{\bmu_i}$, with $\bmu_i=\bmu$ for every inlier $i\in O^c = 
\{1,\ldots,n\}\setminus O$. In addition, let $\bmu$ be the mean of $\prob_{\bmu}$ and the family $\{\prob_{\bmu}\} $ be translation invariant (meaning that for every 
vector $\bs a$, the random variable $\bY_i-\bs a$ is drawn from $\prob_{\bmu_i-\bs a}$). If we have an 
initial estimator $\hat\bmu_0$ of $\bmu$, which is consistent but not necessarily 
rate-optimal, then we can define the centered observations $\bY'_i = \bY_i-\hat\bmu_0$. 
Each observation $\bY'_i$ will have a distribution close to $\prob_{\btheta_i}$, where 
$\{\btheta_i\triangleq \bmu_i-\bmu,i\in[n]\}$ is a sparse set of vectors, 
so that $\frac1n\sum_{i\in[n]} \bY_i$ is a natural estimator of $\bmu+\frac1n\sum_{i\in[n]} \btheta_i$. The strategy we propose here
is to use an estimator $\hat\bL_n$---based on the transformed observations $\bY_i'$--- of the linear functional $\bL_n = 
\frac1n\sum_{i\in[n]} \btheta_i$ and then to update the estimator of $\bmu$ 
by the formula
$\hat\bmu_1 = \frac1n\sum_{i\in[n]} \bY_i - \hat\bL_n$. This procedure can be
iterated using $ \hat\bmu_1$ as an initial estimator of $\bmu$. We elaborate on
this approach in the case of the normal distribution, $\prob_{\bmu} = 
\mathcal N_p(\bmu,\sigma^2\bfI_p)$, in the second part of the present work. 

\subsection{Organization}
The rest of the paper is organized as follows. \Cref{section_linear} is
devoted to the problem of linear functional estimation. It contains the
statements of the main results concerning the risk bounds of different
relevant estimators and some lower bounds on the minimax risk. The problem
of robust estimation is addressed in \Cref{section_robust}. We summarize our
findings and describe some directions of future research in \Cref{section_conc}.
The proofs of main theorems are postponed to \Cref{section_proofs1}, whereas
the proofs of technical lemmas are gathered in \Cref{section_proofs2}.
Some well-known results frequently used in the present work are recalled
in~\Cref{section_tails}.

\subsection{Notation}
We denote by $[k]$ the set of integers $\{1,\ldots,k\}$. The $k$-dimensional
vectors containing only ones and only zeros are denoted by $\mathbf1_k$ and
$\mathbf 0_k$, respectively. As usual, $\|\bu\|_2$ stands for the Euclidean norm
of a vector $\bu\in\RR^k$. The $k\times k$ identity matrix is denoted by $\bfI_k$.
For every $p\times n$ matrix $\bfM$ and every $T\subset [n]$, we denote by $\bfM_T$
the submatrix of $\bfM$ obtained by removing the columns with indices lying outside
$T$. The Frobenius norm of $\bfM$, denoted by $\|\bfM\|_F$, is defined by
$\|\bfM\|_F^2 = \trace(\bfM^\top\bfM)$. We will use the notation $\bL(\bfM)$ for
the linear functional $\bfM\mathbf 1_n$ equal to the sum of the columns of $\bfM$.

\section{Estimation of a linear functional}\label{section_linear}

We assume that we are given a $p\times n$ matrix $\bfY$ generated by the following model:
\begin{equation}\label{model1}
	\bfY = \bfT+\sigma\bXi, \quad \xi_{i,j} \simiid \mathcal{N}(0,1).
\end{equation}
This means that the deterministic matrix $\bfT$ is observed in Gaussian white noise of variance $\sigma^2$.
Equivalently, the columns $\bY_i$ of $\bfY$ satisfy
\begin{equation}
	\bY_i = \bt_i+\sigma\bxi_i, \quad \bxi_i \simiid \mathcal{N}(\mathbf 0_p,\bfI_p),\quad i=1,\ldots,n.
\end{equation}
Our goal is to estimate the vector $\bL(\bfT)\in\RR^p$, where $\bL:\RR^{p\times n}\to\RR^p$ is the
linear transformation defined by
\begin{equation}\label{L}
	\bL(\bfT) = \sum_{i=1}^n \bt_{i} = \bfT\bf1_n.
\end{equation}
Let us first explain that this is a nontrivial statistical problem, at least when both
$p$ and $n$ are large. In fact, the naive solution to the aforementioned problem consists
in replacing in \eqref{L} the unknown matrix $\bfT$ by the noisy observation $\bfY$.
This leads to the estimator $\hat\bL = \bfY\bf1_n$, the risk of which can be easily
shown to be
\begin{equation}\label{risk1}
	\esp_\bfT \|\hat\bL-\bL(\bfT)\|_2^2 = \sigma^2 np.
\end{equation}
When the matrix $\bfT$ has at most $s$ nonzero columns with $s$ being much smaller than $n$, it is possible to design estimators that perform much better than the naive estimator $\hat\bL_n$. Indeed, an oracle who knows the sparsity pattern $S = \{i\in[n] : \bt_i\not=0\}$ may use the oracle-estimator $\hat\bL_S = \bL(\bfY_S)$ which has a risk equal to $\sigma^2s p$. It is not difficult to show that there is no estimator having a smaller risk uniformly over all the matrices $\bfT$ with a given sparsity
pattern $S$ of cardinality $s$. Thus, we have two benchmarks: the very slow rate $\sigma^2np$ attained by the naive estimator and the fast rate $\sigma^2 sp$ attained by the oracle-estimator that is unavailable in practice. The general question that we study in this work is the following: what is the best possible rate in the range $[\sigma^2 sp, \sigma^2 np]$ that can be obtained by an estimator that does not rely on the knowledge of $S$?

In what follows, we denote by $\calM(p,n,s)$ the set of all $p\times n$ matrices with real entries having at most $s$ nonzero columns:
\begin{equation}\label{calM}
\calM(p,n,s) = \Big\{\bfT\in \RR^{p\times n} : \sum_{i=1}^n \mathds 1(\|\bt_i\|_2>0) \le s\Big\}.
\end{equation}

\subsection{Greedy subset selection}\label{subsection_GSS}

Let us consider a greedy estimator that tries to successively recover various pieces of the sparsity pattern $S$. We start by setting $I_1 = [n]$ and $\calI_1 = \big\{J\subseteq I_1 : \|\bL(\bfY_{J})\|_2^2 \ge 12\sigma^2(|J|p + \l|J|^2)\big\}$. If $\calI_1$ is empty, then we set $\hat J_1 = \varnothing$ and terminate. Otherwise, \ie when $\calI_1$ is not empty, we set $\hat J_1 = \text{arg}\min\big\{|J| : J\in\calI_1\big\}$ and $I_2 = I_1\setminus \hat J_1$. In the next step, we define $\calI_2$, $\hat J_2$ and $I_3$ in the same way using as starting point $I_2$ instead of $I_1$. We repeat this procedure until we get $\hat J_\ell = \varnothing$ or $I_{\ell+1} = \varnothing$. Then we set
\begin{equation}\label{est:greedy}
\hat S = \hat J_1\cup \ldots\cup \hat J_\ell
\quad\text{and}\quad \hat\bL{}^{\rm GSS} = \bL(\bfY_{\hat S}).
\end{equation}
The detailed pseudo-code for this algorithm is given in \Cref{algo:1} below.

\begin{algorithm}[tbh]
\setstretch{1.2}
\SetAlgoLined
\SetKwInOut{Input}{input}\SetKwInOut{Output}{output}
\Input{matrix $\bfY$ and noise variance $\s$, threshold $\l$.}
\Output{vector $\hat\bL{}^{\rm GSS}$.}
initialization $I \gets [n]$ and $\hat S \gets \varnothing$.\;
\Repeat{$I$ is empty or $\hat J$ is empty}{
Set $\calI \gets \big\{J\subseteq I : \|\bL(\bfY_{J})\|_2^2
\ge 12\sigma^2 (|J|p + \l |J|^2)\big\}$.\;
\eIf{$\calI =\varnothing$}{$\hat J\gets\varnothing$\;
}{
Set $\hat J \gets \text{arg}\min\big\{|J| : J\in\calI\big\}$.\;
}
Update $\hat S \gets \hat S \cup \hat J$.\;
Update $I \gets I\setminus \hat J$.\;
}
\Return{$\hat\bL{}^{\rm GSS}\gets \sum_{i\in\hat S} \bY_i$}
\caption{Greedy subset selection algorithm}
\label{algo:1}
\end{algorithm}

\begin{theorem}\label{theorem_GSS}
\label{GSS}
Let $\d\in(0,1)$ be a prescribed tolerance level. The greedy subset selection estimator
with $\l = \nicefrac32\log(\nicefrac{2n}{\d})$ satisfies
\begin{equation}\label{GSS:1}
\sup_{\bfT\in \calM(p,n,s)}\prob_\bfT\Big(\|\hat\bL{}^{\rm GSS}
-\bL(\bfT)\|_2^2
    \le 60\sigma^2{s(p+\l s)}\Big) \ge 1-\d.
\end{equation}
\end{theorem}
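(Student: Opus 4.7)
The plan is to work on a high-probability event $\mathcal E$ on which a uniform bound is available for the noise contribution of every subset of columns, and then to exploit the algorithm's minimality rule to control separately the cardinality of the selected set $\hat S$ and the missed signal $\bL(\bfT_{S\setminus \hat S})$, where $S=\{i:\bt_i\neq 0\}$. I take
\[
\mathcal E \;=\; \Big\{\|\bL(\bXi_J)\|_2^2 \leq c_0\,\big(|J|p+\lambda|J|^2\big)\ \text{for every non-empty }J\subseteq[n]\Big\},
\]
for a small absolute constant $c_0$ (say $c_0=3$). Since $\bL(\bXi_J)=\sum_{i\in J}\bxi_i\sim\mathcal N(\mathbf 0_p,|J|\bfI_p)$, the Laurent--Massart tail for $\chi^2_p$ gives $\prob(\|\bL(\bXi_J)\|_2^2\geq |J|(p+2\sqrt{pt}+2t))\leq e^{-t}$. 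Choosing $t$ of order $\lambda|J|$, using $2\sqrt{pt}\leq p+t$, and taking a union bound over the $\binom{n}{k}\leq(en/k)^k$ subsets of each size $k$ and then over $k\in[n]$, the choice $\lambda=\tfrac32\log(2n/\delta)$ ensures $\prob(\mathcal E)\geq 1-\delta$.

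The key estimate is an upper bound $|\hat J_j|\leq 4\,|\hat J_j\cap S|$ for every non-empty block $\hat J_j$ produced by the algorithm, which exploits the fact that $\hat J_j$ is of minimal cardinality in $\calI_j$. On $\mathcal E$, the threshold condition $\|\bL(\bfY_{\hat J_j})\|_2^2\geq 12\sigma^2(|\hat J_j|p+\lambda|\hat J_j|^2)$ together with the triangle inequality yields
\[
\|\bL(\bfT_{\hat J_j})\|_2 \;\geq\; \big(\sqrt{12}-\sqrt{c_0}\big)\sigma\sqrt{|\hat J_j|p+\lambda|\hat J_j|^2}\;>\;0,
\]
so $s_j:=|\hat J_j\cap S|\geq 1$. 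Using $\bt_i=0$ for $i\notin S$, one has $\bL(\bfY_{\hat J_j\cap S})=\bL(\bfY_{\hat J_j})-\sigma\bL(\bXi_{\hat J_j\setminus S})$; if, for contradiction, $|\hat J_j|\geq 4s_j$, then $|\hat J_j|p+\lambda|\hat J_j|^2\geq 4(s_jp+\lambda s_j^2)$ and the same triangle argument combined with $\mathcal E$ gives $\|\bL(\bfY_{\hat J_j\cap S})\|_2^2\geq 12\sigma^2(s_jp+\lambda s_j^2)$, i.e.\ $\hat J_j\cap S\in\calI_j$. But $|\hat J_j\cap S|=s_j<|\hat J_j|$, contradicting the minimality of $|\hat J_j|$. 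Summing over $j$ and using that the $\hat J_j\cap S$ are disjoint subsets of $S$, $|\hat S|=\sum_j|\hat J_j|\leq 4\sum_j s_j\leq 4s$.

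For the missed signal, note that at termination either $\hat S=[n]$ (so $S\setminus\hat S=\varnothing$) or the final family is empty, in which case $S\setminus\hat S$, as a subset of the final $I$, fails the threshold: $\|\bL(\bfY_{S\setminus\hat S})\|_2^2<12\sigma^2(sp+\lambda s^2)$. Writing $\bL(\bfT_{S\setminus\hat S})=\bL(\bfY_{S\setminus\hat S})-\sigma\bL(\bXi_{S\setminus\hat S})$ and using $\|u+v\|_2^2\leq(1+\eta)\|u\|_2^2+(1+\eta^{-1})\|v\|_2^2$ together with $\mathcal E$ for the noise part controls $\|\bL(\bfT_{S\setminus\hat S})\|_2^2$ by a constant times $\sigma^2(sp+\lambda s^2)$. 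The decomposition $\hat\bL{}^{\rm GSS}-\bL(\bfT) = -\bL(\bfT_{S\setminus\hat S})+\sigma\bL(\bXi_{\hat S})$ then combines this with $\sigma^2\|\bL(\bXi_{\hat S})\|_2^2\leq c_0\sigma^2(|\hat S|p+\lambda|\hat S|^2)\leq 16c_0\sigma^2(sp+\lambda s^2)$ and, after an optimized application of Young's inequality, gives the claimed bound $60\sigma^2 s(p+\lambda s)$.

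The main obstacle is the minimality argument in the second paragraph: it is the only place where the algorithm's specific rule of picking the \emph{smallest-cardinality} superthreshold block is used, and it requires the noise constant $c_0$ on $\mathcal E$ to be a definite factor smaller than the threshold constant $12$. Without it, one cannot prevent the greedy procedure from absorbing many spurious indices, and the resulting bound would carry an extra polynomial factor in $p$ akin to the block-thresholding estimators of \Cref{theorem_HT,theorem_ST}.
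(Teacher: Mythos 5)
Your proposal follows essentially the same route as the paper's proof: the same uniform noise event $\{\|\bL(\bXi_J)\|_2^2\le c_0(|J|p+\l|J|^2),\ \forall J\subseteq[n]\}$ controlled by a $\chi^2$ tail plus union bound (which is what fixes $\l=\nicefrac32\log(\nicefrac{2n}{\d})$), the same use of the minimal-cardinality rule to show that each selected block $\hat J_j$ cannot be dominated by indices outside $S$ (hence $|\hat S|\lesssim s$), and the same termination argument giving $\|\bL(\bfY_{S\setminus\hat S})\|_2^2\le 12\sigma^2 s(p+\l s)$; your decomposition $\hat\bL{}^{\rm GSS}-\bL(\bfT)=\sigma\bL(\bXi_{\hat S})-\bL(\bfT_{S\setminus\hat S})$ is in fact slightly cleaner than the paper's three-term triangle inequality, and your variant of the minimality contradiction (showing $\hat J_j\cap S$ itself passes the threshold) is a legitimate alternative to the paper's (which compares $\hat J_j$ with $\hat J_j\setminus J_0$ for a half-sized null subset $J_0$).

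The one concrete defect is the final constant. With your choices $c_0=3$ and $|\hat J_j|\le 4|\hat J_j\cap S|$ (so $|\hat S|\le 4s$), the three available bounds are $\sigma^2\|\bL(\bXi_{\hat S})\|_2^2\le 48\sigma^2 s(p+\l s)$, $\|\bL(\bfY_{S\setminus\hat S})\|_2^2\le 12\sigma^2 s(p+\l s)$ and $\sigma^2\|\bL(\bXi_{S\setminus\hat S})\|_2^2\le 3\sigma^2 s(p+\l s)$, and the best you can extract is $(\sqrt{48}+\sqrt{12}+\sqrt{3})^2=147$, not $60$: an ``optimized Young's inequality'' cannot beat the squared triangle inequality here, so the last sentence of your argument does not go through as stated. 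Moreover, your version of the minimality argument cannot be tuned to reach $60$ with $\l=\nicefrac32\log(\nicefrac{2n}{\d})$, since making the factor $4$ into a $2$ via $(\sqrt{12}-\sqrt{c_0})^2\cdot 2\ge 12$ would force $c_0\approx 1$, for which the union bound no longer holds at this value of $\l$. The paper reaches $60$ by taking the event constant $2$ and proving that at least half of each $\hat J_j$ lies in $S$ (so $|\hat S|\le 2s$), which gives $(\sqrt{8}+\sqrt{12}+\sqrt{2})^2\le 60$. So the structure of your proof is right and proves the theorem with a larger universal constant; to obtain the stated constant you should tighten the event to $c_0=2$ and replace your cardinality step by the paper's half-subset comparison.
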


This result tells us that the worst-case rate of convergence of the GSS estimator over the class $\calM(p,n,s)$ is $\sigma^2 s(p+s\log n)$. As a consequence, the minimax risk of estimating the functional $\bL(\bTheta)$
over the aforementioned class is at most of order $\sigma^2 s(p+s\log n)$.
As we will see below, this rate is optimal up to a logarithmic factor.

However, from  a practical point of view, the GSS algorithm has limited
applicability because of its high computational cost. It is therefore
appealing to look for other estimators that can be computed efficiently
even though their estimation error does not decay at the optimal rate
for every possible configuration on $(p,n,s)$. Let us note here that
using standard tools it is possible to establish an upper bound similar
to \eqref{GSS:1} that holds in expectation.

\subsection{Group hard thresholding estimator}

A natural approach to the problem of estimating $\bL(\bTheta)$ consits
in filtering out all the signals $\bY_i$ that have a large norm and by
computing the sum of the remaining signals. This is equivalent to solving
the following optimization problem
\begin{equation}\label{definition_HT}
	\hat{\bfT}{}^{\rm GHT} =
	\arg\min_{\mathbf T} \Big\{ \|\bfY-{\mathbf T}\|_F^2 + \lambda^2 \sum_{i=1}^n
	\fcar_{\boldsymbol t_i\neq\mathbf 0} \Big\},
\end{equation}
where $\l>0$ is a tuning parameter. The estimator $\hat{\bfT}{}^{\rm GHT}$,
hereafter referred to as group hard thresholding, minimizes the negative log-likelihood penalized by the number of non-zero columns in $\bTheta$.
One easily checks that the foregoing optimization problem can be solved
explicitly and the resulting estimator is
\begin{equation}
	\hat{\bt}{}^{\rm GHT}_i = \bY_i \fcar_{\|\bY_i\|_2 \ge \lambda},\qquad
	i\in[n].
\end{equation}
Using the group hard thresholding estimator of $\bTheta$ and the method of
substitution, we can estimate $\bL(\bfT)$ by
\begin{equation}\label{definition_LHT}
	\hat{\bL}{}^{\rm GHT}=\bL(\hat{\bfT}{}^{\rm GHT}).
\end{equation}
It is clear that this estimator is computationally far more attractive than
the GSS estimator presented above. Indeed, the computation of the GHT
estimator requires at most $O(pn)$ operations. However, as stated in the
next theorem, this gain is achieved at the expense of a higher
statistical error.

\begin{theorem}\label{theorem_HT}
	Let $\hat{\bL}{}^{\rm GHT}$ be the estimator defined in~\eqref{definition_LHT} with the tuning parameter
	\begin{equation}
		\lambda^2/\sigma^2 = p+4\big\{\log(1+n/s^2)\vee p^{1/2}\log^{1/2}(1+n^2p/s^4)\big\}.
	\end{equation}
	There exists a universal constant $c>0$ such that, for every
	$\bfT\in\calM(p,n,s)$, it holds
	\begin{equation}
		\esp_\bfT\big[\big\| \hat{\bL}{}^{\rm GHT}-\bL(\bfT) \big\|_2^2\big] \le 
		c\sigma^2\Big(s^2p^{1/2}\log^{1/2}(1+n^2p/s^4)+s^2\log(1+n/s^2)+sp\Big).
	\end{equation}
\end{theorem}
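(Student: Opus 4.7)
The plan is to perform a bias--variance decomposition exploiting the independence of the per-column estimators $\hat\bt_i^{\rm GHT}$, and then to control each piece via chi-square tail bounds tuned to the specific threshold $\lambda$. Let $D_i=\fcar_{\|\bY_i\|_2\ge\lambda}$, $\alpha_i=\prob_\bfT(\|\bY_i\|_2<\lambda)$, $b_i=\esp_\bfT\hat\bt_i^{\rm GHT}-\bt_i$, and $u=\log(1+n/s^2)\vee p^{1/2}\log^{1/2}(1+n^2p/s^4)$, so that $\lambda^2=\sigma^2(p+4u)$. The independence of $\bY_1,\ldots,\bY_n$ yields the identity
\begin{equation*}
\esp_\bfT\|\hat\bL^{\rm GHT}-\bL(\bfT)\|_2^2 = \Big\|\sum_{i=1}^n b_i\Big\|_2^2 + \sum_{i=1}^n \trace\bigl(\cov_\bfT(\hat\bt_i^{\rm GHT})\bigr),
\end{equation*}
which is the starting point.

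For the variance, indices $i\in S^c$ contribute $\sigma^2\esp[\|\bxi_i\|_2^2\fcar_{\sigma\|\bxi_i\|_2\ge\lambda}]$, since the spherical symmetry of $\bxi_i$ forces $\esp[\bxi_iD_i]=\mathbf 0$ and hence $b_i=\mathbf 0$. Applying a Laurent--Massart chi-square tail bound at the threshold $p+4u$, together with the representation $\esp[X\fcar_{X\ge c}]=c\prob(X\ge c)+\int_c^\infty\prob(X\ge x)\,dx$, bounds this sum of $(n-s)$ equal terms by a constant times $\sigma^2\bigl(s^2\log(1+n/s^2)+s^2 p^{1/2}\log^{1/2}(1+n^2p/s^4)\bigr)$. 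For $i\in S$, the identity $D_i(1-D_i)=0$ gives
\begin{equation*}
\esp\|\hat\bt_i^{\rm GHT}-\bt_i\|_2^2 = \sigma^2\esp[\|\bxi_i\|_2^2 D_i] + \|\bt_i\|_2^2\alpha_i \le \sigma^2 p + \|\bt_i\|_2^2\alpha_i,
\end{equation*}
producing the $sp$ baseline and leaving a residual $\sum_{i\in S}\|\bt_i\|_2^2\alpha_i$ to be handled.

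For the bias, rotational symmetry of $\bxi_i$ around the axis through $\bt_i$ forces $b_i$ to be parallel to $\bt_i$; projecting on that axis and applying Cauchy--Schwarz to the standard normal component $\bxi_i\cdot\bt_i/\|\bt_i\|_2$ yields $\|b_i\|_2\le \|\bt_i\|_2\alpha_i+\sigma\sqrt{\alpha_i}$ for $i\in S$. Since $b_i=\mathbf 0$ on $S^c$, a second Cauchy--Schwarz over the index set gives
\begin{equation*}
\Big\|\sum_{i=1}^n b_i\Big\|_2^2 \le s\sum_{i\in S}\|b_i\|_2^2 \le 2s\sum_{i\in S}\|\bt_i\|_2^2\alpha_i + 2s^2\sigma^2,
\end{equation*}
so both the variance and the bias reduce to a uniform bound on $\sum_{i\in S}\|\bt_i\|_2^2\alpha_i$.

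The main obstacle and technical crux of the proof is the pointwise estimate $\|\bt\|_2^2\,\prob(\|\bt+\sigma\bxi\|_2<\lambda)\le C\sigma^2(u+p^{1/2})$ valid for every $\bt\in\RR^p$. In the easy regime $\|\bt\|_2^2\le 4u\sigma^2$, the trivial $\alpha\le 1$ gives $\|\bt\|_2^2\alpha\le 4u\sigma^2$ directly. In the hard regime $\|\bt\|_2^2>4u\sigma^2$, the mean $p+\|\bt\|_2^2/\sigma^2$ of the non-central $\chi^2_p(\|\bt\|_2^2/\sigma^2)$ distribution of $\|\bY\|_2^2/\sigma^2$ exceeds $\Lambda^2=p+4u$, and a non-central chi-square lower-tail inequality of Laurent--Massart type (recalled in \Cref{section_tails}) yields exponential decay of $\alpha$ in $(\|\bt\|_2^2/\sigma^2-4u)^2/(p+\|\bt\|_2^2/\sigma^2)$; optimizing the scalar function $x\mapsto x\exp\bigl(-(x-4u)^2/[c(p+x)]\bigr)$ caps $\|\bt\|_2^2\alpha$ at a constant multiple of $\sigma^2(\sqrt{p}+u)$. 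Summing over $S$ gives $\sum_{i\in S}\|\bt_i\|_2^2\alpha_i\le Cs\sigma^2(\sqrt{p}+u)$; plugging this into the bias and variance estimates, absorbing $\sqrt{p}$ into $p^{1/2}\log^{1/2}(1+n^2p/s^4)$, and combining with the chi-square tail contribution from $S^c$ delivers the stated inequality.
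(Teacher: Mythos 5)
Your argument is correct in substance but follows a genuinely different route from the paper's. You use the exact bias--variance identity across the independent columns and reduce everything to the pointwise estimate $\|\bt\|_2^2\,\prob(\|\bt+\sigma\bxi\|_2<\lambda)\lesssim \sigma^2(u+p^{1/2})$, proved via a lower-tail bound for the non-central $\chi^2_p(\|\bt\|_2^2/\sigma^2)$ law of $\|\bY_i\|_2^2/\sigma^2$. The paper instead splits the error as $\sigma\bL(\bXi_{S\cap S_\lambda})-\bL(\bTheta_{S\setminus S_\lambda})+\sigma\bL(\bXi_{S_\lambda\setminus S})$, bounds the on-support stochastic term by $\sqrt{s}\,\|\bXi_S\|$ and the operator-norm moment bound of \Cref{lemma_Wishart2} (yielding $3s^2+3sp$, where your independence argument gives the cleaner $sp$ plus a residual), and handles the thresholded-signal term purely deterministically: on $\{\|\bY_i\|_2<\lambda\}$ one has $\|\bt_i\|_2\le(\lambda^2-\sigma^2p)^{1/2}+2\sigma|\eta_i|+\sigma\big|\|\bxi_i\|_2^2-p\big|^{1/2}$, so only central $\chi^2$ and Gaussian moments are needed. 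Both proofs treat the $S^c$ contribution identically through $\esp[\|\bxi_i\|_2^2\fcar_{\|\bxi_i\|_2^2\ge p+4u}]$ and \Cref{lemme_esperancechideux}. One caveat: your parenthetical claim that the non-central lower-tail inequality is ``recalled in \Cref{section_tails}'' is wrong --- that section contains only central $\chi^2$, maximum, and operator-norm bounds --- so you must either import a standard external inequality (Birg\'e/Laurent--Massart type for non-central $\chi^2$) or replace that step by the paper's elementary expansion of $\|\bY_i\|_2^2$, which delivers the same cap on $\|\bt_i\|_2^2\alpha_i$; this is a citation slip, not a mathematical gap. Finally, note that your residual terms $\sigma^2 s^2$ and $\sigma^2 s^2\sqrt{p}$, absorbed ``into $p^{1/2}\log^{1/2}(1+n^2p/s^4)$'', strictly require the logarithmic factors to be bounded from below; the paper's own proof produces exactly the same terms (from $2\sigma s+\sigma s(2p)^{1/4}$ in its bias bound) and performs the same tacit absorption, so on this point you match, rather than fall short of, the paper's level of rigor.
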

Using the fact that $\log(1+x)\le x$, we infer from this theorem that the rate of the 
group hard thresholding for fixed $\s$ is of order $s^2\sqrt{p}\wedge np+sp$, up to 
a logarithmic factor. Moreover, the rate obtained in this theorem can not be improved, 
up to logarithmic factors, as stated in the next theorem.
\begin{proposition}\label{theorem_HT_lowerbound}
Let us denote by $\hat{\bL}{}^{\rm GHT}_\l$ the estimator defined in~\eqref{definition_LHT} with a threshold $\l>0$. There are two universal constants $p_0\in\NN$ and $c>0$, such that for any $p\ge p_0$ and $s\le n/61$, the following lower bound holds
	\begin{equation}
		\inf_{\l>0}\sup_{\bfT\in\calM(p,n,s)} \esp_\bfT \big\| \hat{\bL}{}^{\rm GHT}_\l-\bL(\bfT) \big\|_2^2 \ge c\sigma^2\Big((s^2p^{1/2})\wedge (np)+s^2\log(1+n/s^2)+sp\Big).
	\end{equation}
\end{proposition}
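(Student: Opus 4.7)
The lower bound is a sum of three nonnegative quantities, so (up to a factor $3$) it suffices to show that for every $\lambda > 0$ we can exhibit a configuration $\bfT \in \calM(p,n,s)$ forcing $\esp_\bfT\|\hat\bL^{\rm GHT}_\lambda - \bL(\bfT)\|_2^2$ to be at least a universal constant times $\sigma^2$ times each of the summands $sp$, $s^2\log(1+n/s^2)$, and $(s^2\sqrt p) \wedge (np)$ separately.

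\textbf{The $sp$ term.} For any $\lambda$, take $\bfT$ with $s$ identical columns equal to a vector $\bt$ of norm much larger than $\lambda + \sigma\sqrt p$. By Gaussian concentration, with probability close to $1$ every signal column passes the threshold. On this event, $\hat\bL^{\rm GHT}_\lambda - \bL(\bfT) = \sigma\sum_{i \in S}\bxi_i + \sum_{i \notin S}\bY_i\fcar_{\|\bY_i\|_2 \ge \lambda}$; the two summands are independent with mean zero (the second by spherical symmetry), so the expectation of the squared norm is at least that of the first, which equals $sp\sigma^2$.

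\textbf{The remaining two terms.} Introduce a critical threshold $\lambda_\star^2 = \sigma^2(p + c_0\sqrt p)$ with $c_0$ a small absolute constant. If $\lambda \le \lambda_\star$, take $\bfT = \mathbf 0$: by the spherical symmetry of the Gaussian noise each $\bY_i\fcar_{\|\bY_i\|_2 \ge \lambda}$ has zero mean, so independence yields
\begin{equation*}
\esp_{\mathbf 0}\|\hat\bL^{\rm GHT}_\lambda\|_2^2 = \sum_{i=1}^n \esp\bigl[\|\bY_i\|_2^2\fcar_{\|\bY_i\|_2 \ge \lambda}\bigr] \ge n\lambda^2\,\prob(\chi^2_p \ge \lambda^2/\sigma^2) \ge c_1\, n p\, \sigma^2,
\end{equation*}
using that the median of $\chi^2_p$ is at least $p-1$ for $p\ge p_0$ and that $\lambda^2 \ge \sigma^2 p/2$ can be assumed without loss of generality. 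Since $s \le n/61$ and $\log(1+n/s^2) \le n/s^2$, we have $np \ge s^2\log(1+n/s^2)$, so this bound majorizes both remaining target terms. If $\lambda > \lambda_\star$, I consider two configurations: $\bfT^{(1)}$ with $s$ identical columns of squared norm $u_1^2 = c_2(\lambda^2 - \sigma^2 p)$, and $\bfT^{(2)}$ with $s$ identical columns of squared norm $u_2^2 = c_2\sigma^2\log(1+n/s^2)$, the latter activated only when $\lambda^2/\sigma^2 > p + 2\log(1+n/s^2)$. For each configuration, a noncentral chi-square concentration bound shows $q := \prob(\|\bY_1\|_2 \ge \lambda) \le 1/2$; decomposing $\bxi$ along and orthogonal to $\bt$, one then obtains
\begin{equation*}
\bigl\|\esp_\bfT[\hat\bL^{\rm GHT}_\lambda - \bL(\bfT)]\bigr\|_2^2 \ge c_3\, s^2\, u^2,
\end{equation*}
because the Stein-type correction $\sigma q\, \esp[\bxi_1 \mid \|\bY_1\|_2 \ge \lambda]$ is, for the small $u$ in play, dominated by the principal missing-signal term $s\bt(1-q)$. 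Jensen's inequality transfers this to the full $L^2$ risk and yields the two desired bounds $\gtrsim \sigma^2 s^2\sqrt p$ and $\gtrsim \sigma^2 s^2 \log(1+n/s^2)$ respectively; combined with the $\lambda \le \lambda_\star$ case, taking the infimum over $\lambda$ produces $(s^2\sqrt p)\wedge(np)$ and $s^2\log(1+n/s^2)$.

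\textbf{Main obstacle.} The delicate step is the small-signal case: one has to show that the conditional mean of $\bY_i$ given $\|\bY_i\|_2 \ge \lambda$ does not drift so far in the direction of $\bt$ as to cancel the bias from columns that fail the threshold. Handling this requires sharp Birgé--Massart-type tail estimates for the noncentral chi-square, together with the rotational symmetry of the Gaussian noise in the hyperplane orthogonal to $\bt$, and a careful calibration of the constants $c_0$ and $c_2$ so that every $\lambda > 0$ is covered by either the zero-signal or one of the two small-signal configurations.
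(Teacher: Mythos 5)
Your architecture for the new $(s^2\sqrt p)\wedge(np)$ term is essentially the paper's: split at a critical threshold $\lambda_\star^2\asymp\sigma^2(p+\sqrt p)$; for $\lambda\le\lambda_\star$ the pure-noise columns force a risk of order $\sigma^2np$ (the paper does the same computation, keeping its $s$ signal columns and using the $n-s$ noise columns); for $\lambda>\lambda_\star$ signal columns of norm $\asymp\sigma p^{1/4}$ are missed and create a bias of order $su$ per the Euclidean norm. The paper uses one fixed configuration (all entries of the first $s$ columns equal to $\sigma p^{-1/4}$) instead of your $\lambda$-dependent norm, which is cosmetic. The genuine gap is that your key inequality $\|\esp_\bfT[\hat{\bL}{}^{\rm GHT}_\lambda-\bL(\bfT)]\|_2^2\ge c_3 s^2u^2$ is precisely the step you defer as the ``main obstacle'': the cancellation between the missing-signal term $-\bt(1-q)$ and the Stein-type drift $\sigma\esp[\bxi_1\fcar_{\|\bY_1\|_2\ge\lambda}]$ is never actually controlled, so the argument is a plan rather than a proof. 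For the $s^2\sqrt p$ configuration this obstacle is in fact mild (the drift lies along $\bt$ and has magnitude at most $\sigma\esp|\bxi_1^\top\bt|/\|\bt\|_2\le\sigma\sqrt{2/\pi}$, negligible against $u\asymp\sigma p^{1/4}$ once $p\ge p_0$), and no noncentral chi-square tail estimates are needed. The paper avoids conditional-mean computations altogether: it writes the error as $\sigma\bL(\bXi_{S\cap S_\lambda})-\bL(\bTheta_{S\setminus S_\lambda})+\sigma\bL(\bXi_{S_\lambda\setminus S})$, applies $(a-b)^2\ge a^2/2-b^2$, drops the cross term by independence of the off-support noise, applies Jensen only to $\sum_{i\in S}\bt_i\fcar_{\|\bY_i\|_2<\lambda}$, lower bounds $\prob_\bfT(\|\bY_i\|_2<\lambda)\ge\tfrac12\prob(\|\bxi_1\|_2^2<p)\ge\tfrac14$ via the sign-symmetry of $\bxi_i^\top\bt_i$ given $\|\bxi_i\|_2$, and subtracts the true-positive noise through its second moment $\le 3s^2+3sp+12s$.

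The second weakness is your treatment of the $sp$ and $s^2\log(1+n/s^2)$ terms. Both follow at once from \Cref{theorem_lowerbound}, which holds for every estimator and hence for $\hat{\bL}{}^{\rm GHT}_\lambda$ at every $\lambda$; this is how the paper obtains them, and it is also how the paper handles the regime where $s^2\sqrt p\lesssim sp$, in which the constructed configuration only yields a bound of the form $c(s^2\sqrt p\wedge np)-3sp$ that may be vacuous (this is where the dichotomy $s^2\sqrt p\gtrless 196\,sp$ and the hypothesis $s\le n/61$ enter; your write-up never addresses this regime). Your bespoke configuration for the $\log$ term, with $u_2^2\asymp\sigma^2\log(1+n/s^2)$, hits the cancellation problem for real: when $\log(1+n/s^2)=O(1)$ the signal norm is of order $\sigma$, the Stein drift is no longer negligible relative to $u_2(1-q)$, and the sharp tail analysis you invoke would have to be carried out in earnest. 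Likewise, the $sp$ argument conditions on the event that all signal columns pass the threshold and then asserts zero means and independence on that event, which is not correct as stated since the indicators are correlated with $\bxi_i$, $i\in S$; it can be repaired, but quoting \Cref{theorem_lowerbound} is both shorter and what the proof of the proposition actually requires.
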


The proofs of these theorems being deferred to \Cref{section_proofs1}, let us
comment on the stated results. At first sight the presence of the sparsity $s$
in the definition of the threshold $\l$ in \Cref{theorem_HT} might seem
problematic, since this quantity is unknown in most practical situations.
However, one can easily modify the claim of \Cref{theorem_HT} replacing $n/s^2$
and $np^{1/2}/s^2$ respectively by $n$ and $np^{1/2}$ both in the definition of
$\l$ and the subsequent risk bound.

A second remark concerns the rate optimality. If we neglect the logarithmic factors in this discussion, the rate of the GHT estimator is shown to be at
most of order $\sigma^2(s^2\sqrt{p}\wedge np +sp)$. This coincides with the optimal rate
(and the one of the GSS estimator) when $s = O(\sqrt{p})$ and has an extra
factor $p$ in the worst-case $p = O(s^4/n^2)$. When there is a limit on the
computational budget, that is when the attention is restricted to the
estimators computable in polynomial (in $s,p,n$) time, we do not know whether
such a deterioration of the risk can be avoided.

An inspection of the proof of \Cref{theorem_HT} shows that if all the nonzero
signals $\bt_i$ are large enough, that is when $\min_{i\in S}\|\bt_i\|_2^2\ge cp $
for some constant $c>0$, the extra factor $\sqrt{p}$ disappears and the GHT
achieves the optimal rate. Put differently, the signals at which the GHT estimator
fails to achieve the optimal rate are those having an Euclidean norm of order $p^{1/4}$. This is closely related to the minimax rate of separation in hypotheses
testing. It is known that the separation rate for testing $H_0:\btheta=\mathbf 0$
against $H_1:\|\bt\|_2\ge \rho$, when one observes $\bY\sim \mathcal N(\bt, \sigma^2\bfI_p)$ is of order $\sigma p^{1/4}$.

Our last remark on \Cref{theorem_HT} concerns the relation with element-wise
hard thresholding. The idea is the following: any column-sparse matrix
$\bTheta$ is also sparse in the most common sense of sparsity. That is,
the number of nonzero entries of the matrix $\bTheta$ is only a small fraction
of the total number of entries. Therefore, one can estimate the entries
of $\bTheta$ by thresholding those of $\bfY$ and then estimate $\bL(\bTheta)$
by the method of substitution. The statistical complexity of this estimator
is quantified in the next theorem, the proof of which is similar to the
corresponding theorem in \citep{collier2017}.
\begin{theorem}\label{theorem_BT}
	Let $\hat{\bL}{}^{\rm HT}$ be the element-wise hard thresholding estimator
	defined by $\hat{\bL}{}^{\rm HT}_i = \sum_{j=1}^n \bY_{i,j} \fcar_{\bY_{i,j}>\lambda}$
	for $i\in [p]$. If the threshold $\l$  is chosen so that $\lambda^2=2\sigma^2\log(1+n/s^2)$, then
	\begin{equation}
		\sup_{\bfT\in\calM(p,n,s)}\esp_\bfT\big[\|\hat{\bL}{}^{\rm HT}-\bL(\bfT)\|^2 \big]
		\le c\sigma^2s^2p\log(1+n/s^2),
	\end{equation}
	where $c>0$ is a universal constant.
\end{theorem}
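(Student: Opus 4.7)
The plan is to reduce the $p$-dimensional error to a sum of $p$ one-dimensional risks and to bound each of them by standard estimates for Gaussian hard thresholding. Since $\bfT\in\calM(p,n,s)$ means at most $s$ columns of $\bfT$ are nonzero, each row of $\bfT$ has at most $s$ nonzero entries; writing $S_i=\{j:T_{i,j}\neq 0\}$, we have $|S_i|\le s$ for every $i\in[p]$. Treating the interior indicator as the natural symmetric hard-threshold (i.e.\ $\fcar_{|Y_{i,j}|>\lambda}$, as usual in this literature), we get the decomposition
\begin{equation*}
\hat{\bL}{}^{\rm HT}_i-\bL(\bfT)_i
=\underbrace{\sum_{j\in S_i}\bigl(Y_{i,j}\fcar_{|Y_{i,j}|>\lambda}-T_{i,j}\bigr)}_{A_i}
+\underbrace{\sum_{j\notin S_i}Y_{i,j}\fcar_{|Y_{i,j}|>\lambda}}_{B_i}.
\end{equation*}

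For $A_i$, I would apply Cauchy--Schwarz to write $A_i^2\le s\sum_{j\in S_i}(Y_{i,j}\fcar_{|Y_{i,j}|>\lambda}-T_{i,j})^2$ and then invoke the classical (uniform-in-$\theta$) risk bound for single-coordinate Gaussian hard thresholding, $\esp[(Y\fcar_{|Y|>\lambda}-\theta)^2]\le C(\sigma^2+\lambda^2)$, which follows by splitting on $|\theta|\le\lambda$ vs.\ $|\theta|>\lambda$ and using a Gaussian tail bound in the latter case. Substituting $\lambda^2=2\sigma^2\log(1+n/s^2)$ yields $\esp A_i^2\le C\sigma^2 s^2\log(1+n/s^2)$.

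For $B_i$, since the summands involve only independent centered Gaussian noise and a symmetric threshold, each term has zero expectation, so $\esp B_i^2=\var B_i=\sum_{j\notin S_i}\var(\sigma\xi_{i,j}\fcar_{|\sigma\xi_{i,j}|>\lambda})\le n\sigma^2\esp[\xi^2\fcar_{|\xi|>\lambda/\sigma}]$. Using the Gaussian tail identity $\esp[\xi^2\fcar_{|\xi|>t}]\le Ct\phi(t)$ valid for $t\ge 1$, and the choice $t^2=2\log(1+n/s^2)$ which gives $\phi(t)=\frac{1}{\sqrt{2\pi}}\cdot\frac{s^2}{s^2+n}$, one obtains $\esp B_i^2\le C\sigma^2\,\frac{n s^2}{s^2+n}\sqrt{\log(1+n/s^2)}\le C\sigma^2 s^2\sqrt{\log(1+n/s^2)}$, which is of smaller order than the bound for $\esp A_i^2$.

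Combining the two pieces via $(A_i+B_i)^2\le 2A_i^2+2B_i^2$ gives $\esp[(\hat{\bL}{}^{\rm HT}_i-\bL(\bfT)_i)^2]\le C\sigma^2 s^2\log(1+n/s^2)$, and summing over $i\in[p]$ yields the announced bound. The only real obstacle is ensuring the signal-side per-coordinate bound $C(\sigma^2+\lambda^2)$ is indeed uniform in $T_{i,j}\in\RR$, which requires a short case analysis but is entirely standard (cf.\ \citep{collier2017}); the key economy is that one does not need the finer $\lambda^2\wedge\theta^2$ estimate because the Cauchy--Schwarz step already costs the factor $s$, and a further factor $s$ arises from $|S_i|$ terms, producing the $s^2$ in the final rate.
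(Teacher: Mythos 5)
Your decomposition into a signal term $A_i$ (columns in the row support $S_i$, $|S_i|\le s$) and a pure-noise term $B_i$ is the right start, and matches the coordinate-wise strategy of the argument in \citep{collier2017} to which the paper defers. But the treatment of $A_i$ has a genuine gap in the regime $s\gtrsim\sqrt{n}$, where $\log(1+n/s^2)$ is no longer bounded away from zero. Your chain gives $\esp[A_i^2]\le s\sum_{j\in S_i}\esp[(Y_{i,j}\fcar_{|Y_{i,j}|>\lambda}-T_{i,j})^2]\le C s^2(\sigma^2+\lambda^2)=C\sigma^2 s^2\bigl(1+2\log(1+n/s^2)\bigr)$, and the additive $\sigma^2 s^2$ cannot be absorbed into $c\,\sigma^2 s^2\log(1+n/s^2)$ once $s^2\gg n$, since then $s^2\log(1+n/s^2)\asymp n\ll s^2$. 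Moreover this loss is intrinsic to the Cauchy--Schwarz step, not to sloppy constants: if all $s$ nonzero entries of row $i$ are very large, each summand has second moment close to $\sigma^2$, so $s\sum_{j\in S_i}\esp[(\cdot)^2]\approx s^2\sigma^2$; at $s=n$ your route can only give an order $\sigma^2 n^2 p$ bound, while the theorem claims order $\sigma^2 np$. So as written the proof establishes the statement only for $s=O(\sqrt n)$, whereas the theorem is asserted for all $s\le n$ (and the $s\approx n$ endpoint is exactly where the rate degenerates to the naive $\sigma^2 np$).

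The repair is to use independence across $j$ inside a row instead of Cauchy--Schwarz: write $\esp[A_i^2]=\sum_{j\in S_i}\var\bigl(Y_{i,j}\fcar_{|Y_{i,j}|>\lambda}\bigr)+\bigl(\sum_{j\in S_i}b_{i,j}\bigr)^2$ with $b_{i,j}=\esp[Y_{i,j}\fcar_{|Y_{i,j}|>\lambda}]-T_{i,j}=-\esp[Y_{i,j}\fcar_{|Y_{i,j}|\le\lambda}]$, so that $|b_{i,j}|\le\lambda$ and the bias part is at most $s^2\lambda^2=2\sigma^2 s^2\log(1+n/s^2)$, while the variance part is at most $Cs(\sigma^2+\lambda^2)$, which is $\le C'\sigma^2 s^2\log(1+n/s^2)$ because $s\log(1+n/s^2)\ge \log 2$ for every $1\le s\le n$. (Your uniform per-coordinate bound $\esp[(Y\fcar_{|Y|>\lambda}-\theta)^2]\le C(\sigma^2+\lambda^2)$ is fine, but prove it via $\theta^2\fcar_{|Y|\le\lambda}\le 2\lambda^2+2\sigma^2\xi^2$ rather than a Gaussian tail split, which needs $\lambda\gtrsim\sigma$ and again fails for large $s$.) The same regime also invalidates your $B_i$ computation: $\esp[\xi^2\fcar_{|\xi|>t}]\le Ct\phi(t)$ requires $t\ge1$, and here $t=\sqrt{2\log(1+n/s^2)}<1$ when $s^2\gtrsim n$; in that case simply use $\esp[\xi^2\fcar_{|\xi|>t}]\le 1$ together with $n\le C s^2\log(1+n/s^2)$. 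With these two changes the argument goes through for all $s\le n$ and yields the claimed bound.
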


A striking feature of the problem of linear functional estimation
uncovered by \Cref{theorem_HT} and \Cref{theorem_BT}, is that exploiting the
group structure leads to an improvement of the risk which may attain a factor
$p^{-1/2}$ (for the squared Euclidean norm). To the best of our knowledge, this
is the first framework in which the grouping is proved to have such a strong
impact. This can be compared to the problem of estimating the matrix $\bTheta$
itself under the same sparsity assumptions. Provable guarantees in such a setting
show only a logarithmic improvement due to the use of the sparsity structure
\citep{lounici2011,Bunea2014}.

\subsection{Group-soft-thresholding estimator}

A natural question is whether the results obtained above for the group hard
thresholding can be carried over a suitable version of the soft-thresholding
estimator. Such an extension could have two potential benefits. First, the soft
thresholding is defined as a solution to a convex optimization problem, whereas
hard thresholding minimizes a nonconvex cost function. This difference makes
the soft thresholding method more suitable to deal with various statistical
problems. The simplest example is the problem of linear regression: the extension
of the soft thresholding estimator to the case of non-orthogonal design is the lasso,
that can be computed even when the dimension is very large. In the same problem,
the extension of the hard thresholding is the BIC-type estimator, the computation
of which is known to be prohibitively complex when the dimension is large.

A second reason motivating our interest in the soft thresholding is its smooth
dependence on the data. This smoothness implies that the estimator is less sensitive to the changes in the data than the hard thresholding. Furthermore, it makes it
possible to design a SURE-type algorithm for defining an unbiased estimator of the
risk and, eventually, selecting the tuning parameter in a data-driven way.

In the model under consideration, the group soft thresholding estimator 
$\hat{\bfT}^{\rm GST}$
can be defined as the minimizer of the group-lasso cost function, that is
\begin{equation}\label{definition_ST}
	\hat{\bfT}^{\rm GST} = \arg\min_{\mathbf{T}}
	\Big\{ \sum_{i=1}^n \| \bY_i - \boldsymbol{t}_i \|_2^2 + \sum_{i=1}^n \l_i \|\boldsymbol{t}_i\|_2 \Big\}.
\end{equation}
This problem has an explicit solution given by
\begin{equation}\label{definition_STexplicite}
	\hat{\bt}{}^{\,\rm GST}_i = \Big(1-\frac{\l_i}{2\|\bY_i\|_2}\Big)_+ \bY_i.
\end{equation}
It is natural then to define the plug-in estimator as
$\hat{\bL}{}^{\rm GST} = \bL(\hat{\bfT}^{\rm GST})$. The next theorem establishes
the performance of this estimator.

\begin{theorem}\label{theorem_ST}
	The estimator $\hat{\bL}{}^{\rm GST}=\bL(\hat{\bfT}^{\rm GST})$ defined
	in~\eqref{definition_STexplicite} with\footnote{Note that $\lambda_i=+\infty$
	if $\|\bY_i\|^2_2\le\sigma^2p$. This reflects
	the fact that there is no need to fit the signals of very low magnitude.}
	\begin{equation}
	\l_i = \frac{2\sigma\gamma\|\bY_i\|_2}{\big(\|\bY_i\|^2_2-\sigma^2p\big)_+^{1/2}}, \quad
	\gamma^2 = 
	4\big\{\log(1+n/s^2)\vee p^{1/2}\log^{1/2}(1+n^2p/s^4)
	\big\}
	\end{equation}
	satisfies, for every $\bfT\in\calM(p,n,s)$,
	\begin{equation}
	\esp_\bfT \big[\big\|\hat{\bL}{}^{\rm GST}-\bL(\bfT)\big\|^2 \big]
	\le c\sigma^2\Big(s^2p^{1/2}\log^{1/2}(1+n^2p/s^4)+s^2\log(1+n/s^2)+sp\Big),
	\end{equation}
	where $c>0$ is some universal constant.
\end{theorem}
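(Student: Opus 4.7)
The plan is to reduce the analysis of the GST estimator to that of the GHT estimator via a column-wise comparison. A direct computation from \eqref{definition_STexplicite} shows that the column $\hat{\bt}{}^{\,\rm GST}_i$ is nonzero precisely on the event $A_i := \{\|\bY_i\|_2^2 > \sigma^2(p+\gamma^2)\}$, and this coincides with the retention event of the GHT estimator of \Cref{theorem_HT} at threshold $\lambda^2 = \sigma^2(p+\gamma^2)$. On $A_i$ one has $\hat{\bt}{}^{\,\rm GST}_i = \hat{\bt}{}^{\,\rm GHT}_i - D_i$ with
\begin{equation*}
D_i := \frac{\sigma\gamma\,\bY_i}{\bigl(\|\bY_i\|_2^2 - \sigma^2 p\bigr)^{1/2}}\,\fcar_{A_i}.
\end{equation*}
Applying $\|a+b\|^2 \le 2\|a\|^2 + 2\|b\|^2$ and invoking \Cref{theorem_HT} for the GHT term, the task reduces to proving that $\esp_\bfT\|\sum_{i=1}^n D_i\|_2^2$ is at most of the target order.

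Next I split the shrinkage sum according to whether $i\in S$ or $i\in S^c$. For $i\in S^c$, $\bY_i = \sigma\bxi_i$ has a rotationally symmetric law, so $\esp D_i = 0$; the $D_i$'s being independent, the variances add and $\esp\|\sum_{i\in S^c}D_i\|_2^2 = \sum_{i\in S^c}\esp\|D_i\|_2^2$. On $A_i$ the inequality $\|\bY_i\|_2^2/(\|\bY_i\|_2^2-\sigma^2 p) \le 1 + p/\gamma^2$ gives the deterministic bound $\|D_i\|_2^2 \le \sigma^2(p+\gamma^2)$, and the Laurent--Massart chi-square tail bound together with the definition of $\gamma^2$ yields $\prob(A_i) \lesssim s^4/(n^2 p) \wedge s^2/n$, so that the whole $S^c$ contribution is absorbed into the target in the sparsity regime $s^2\sqrt p \le np$ (the dense regime being handled by the trivial bound $\sigma^2 np$).

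For $i \in S$ I write $\esp\|\sum_{i\in S}D_i\|_2^2 = \|\sum_{i\in S}\esp D_i\|_2^2 + \sum_{i\in S}\trace\cov(D_i)$, using the independence of the $D_i$'s. The variance part is controlled by the same deterministic estimate as above: $\sum_{i\in S}\esp\|D_i\|_2^2 \le s\sigma^2(p+\gamma^2)$, which is absorbed in $\sigma^2(sp+s^2\gamma^2)$. For the bias, the spherical symmetry of $\bxi_i$ about $\mathbf 0$ forces $\esp D_i$ to be parallel to $\bt_i/\|\bt_i\|_2$, and the key claim is that $\|\esp D_i\|_2 \le c\sigma\gamma$ uniformly over $\bt_i$; the triangle inequality then yields a bias squared of at most $s^2\sigma^2\gamma^2$, which exactly matches the first two terms of the target by the definition of $\gamma^2$.

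The main technical obstacle is the uniform bound $\|\esp D_i\|_2 \le c\sigma\gamma$. Working in coordinates with $\bt_i$ along the first axis and writing $Y_\parallel = \|\bt_i\|_2 + \sigma g$, $\|Y_\perp\|_2^2 = \sigma^2 R$ with $g \sim \mathcal N(0,1)$ and $R \sim \chi^2_{p-1}$ independent, the bias equals $\esp[\sigma\gamma\,Y_\parallel/(Y_\parallel^2 + \sigma^2 R - \sigma^2 p)^{1/2}\,\fcar_{A_i}]$. For $\|\bt_i\|_2^2 \gg \sigma^2 p$ the denominator is close to $\|\bt_i\|_2$ and the bound is immediate; for $\|\bt_i\|_2^2 \ll \sigma^2\gamma^2$ the event $A_i$ has exponentially small probability by a noncentral $\chi^2$ tail bound, making the contribution negligible. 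The delicate intermediate regime $\sigma^2\gamma^2 \lesssim \|\bt_i\|_2^2 \lesssim \sigma^2 p$ is treated by conditioning on the typical event $\{|R - (p-1)| \le c\sqrt{p\,\log n}\}$, on which the denominator is close to its conditional mean and a direct computation gives the $\sigma\gamma$ bound; the contribution of the rare complement is absorbed thanks to the specific tuning $\gamma^2 = 4\{\log(1+n/s^2)\vee p^{1/2}\log^{1/2}(1+n^2 p/s^4)\}$, which is exactly calibrated for this cancellation to occur.
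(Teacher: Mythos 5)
Your overall route is genuinely different from the paper's. You compare GST with GHT at the matching threshold $\lambda^2=\sigma^2(p+\gamma^2)$, invoke \Cref{theorem_HT}, and then analyze the pure shrinkage sum $\sum_i D_i$ by a bias--variance split using independence and rotational symmetry (zero mean off the support, bias parallel to $\bt_i$ on it). The paper never makes this comparison: it decomposes the error directly into the approximation error on $S$, the stochastic error on $S$ and the stochastic error on $S^c$, and bounds the shrinkage bias on $S$ column by column in $L_2$, tolerating a per-column contribution of order $\sigma(\gamma+p^{1/4})$. Your treatment of the $S^c$ part and of the variance on $S$ is sound and close in spirit to the paper's $T_3$ and $T_2$.

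The gap is the key claim $\|\esp D_i\|_2\le c\sigma\gamma$ and the sketch offered for it. First, in the regime $\|\bt_i\|_2^2\ll\sigma^2\gamma^2$ the event $A_i$ is \emph{not} exponentially rare: for a null column, $\prob(A_i)=\prob(\chi^2_p>p+\gamma^2)$, and when $\gamma^2\asymp\sqrt{p\log(1+n^2p/s^4)}$ with $n^2p/s^4=O(1)$ this probability is of constant order, so ``small probability, hence negligible'' only gives the crude bound $\sigma\sqrt{p+\gamma^2}\,\prob(A_i)$, which can far exceed $\sigma\gamma$. (What does work there is that the denominator is at least $\sigma\gamma$ on $A_i$, whence $\|\esp D_i\|_2\le\|\bt_i\|_2+\sigma\esp|g|\lesssim\sigma\gamma+\sigma$.) Second, and more seriously, in the intermediate regime $\|\bt_i\|_2\asymp\sigma p^{1/4}$ your conditioning on $\{|R-(p-1)|\le c\sqrt{p\log n}\}$ does not make the denominator comparable to its mean: well inside this typical event, $R$ can be such that $\|\bY_i\|_2^2-\sigma^2p$ sits just above $\sigma^2\gamma^2$, where $|Y_\parallel|/(\|\bY_i\|_2^2-\sigma^2p)^{1/2}$ is of order $p^{1/4}/\gamma$, not $O(1)$; establishing the $c\sigma\gamma$ bound (if it holds at all uniformly) would require an averaging/anticoncentration argument over $R$ or $Y_\parallel$ near the retention boundary, which the sketch does not supply.

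Fortunately, you do not need this strength. The weaker bound $\|\esp D_i\|_2\le c\sigma(\gamma+p^{1/4})$ follows from the dichotomy the paper itself uses for its bias term: either $\sigma^2(p-R)\le Y_\parallel^2/2$, in which case $|Y_\parallel|/(\|\bY_i\|_2^2-\sigma^2p)^{1/2}\le\sqrt2$, or $|Y_\parallel|\le\sigma\sqrt{2(p-R)_+}$, whose expectation is $O(\sigma p^{1/4})$, combined with the lower bound $\sigma\gamma$ on the denominator over $A_i$. This yields a bias squared of order $s^2\sigma^2(\gamma^2+\sqrt p)$, which fits the stated bound to exactly the same extent as the paper's own estimate does (the paper's bound on its $T_1$ also contains an $s^2\sigma^2\sqrt p$ term with no logarithm). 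With the key lemma replaced by this weaker version, your reduction-to-GHT argument goes through.
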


The comments made after the statement of \Cref{theorem_HT} can be repeated here.
The dependence of $\g$ on $s$ is not crucial; one can replace $s$ by 1 in the expression
for $\g$, this will not have a strong impact on the risk bound. The bound in expectation can be
complemented by a bound in deviation. The rate obtained for the soft thresholding is
exactly of the same order as the obtained in \Cref{theorem_HT} for the group 
hard thresholding. A notable difference, however, is that in the case of soft thresholding the tuning
parameter $\l$ suggested by the theoretical developments is data dependent.

%%  This remark turns out to be wrong
%%
%\begin{remark}\label{rem:1}
%Both \Cref{theorem_HT} and \Cref{theorem_ST} remain valid if we replace the condition
%$\bxi_i\simiid \mathcal{N}(0,\bfI_p)$ by the following one: the vectors
%$\bxi_i$ are independent (not necessarily identically distributed) Gaussian
%$\mathcal{N}(\mathbf 0, \boldsymbol\Sigma_i)$ with $\max_{i\in[n]}\|\boldsymbol\Sigma_i\|\le 1$.
%\end{remark}

\subsection{Lower bounds and minimax rate optimality}\label{section_mmx}

We now address the question of the optimality of our estimators. In~\citep{collier2017}, the case $p=1$ was solved with lower and upper bounds matching  up to a constant. In particular, Theorem~1 in \citep{collier2017} yields the following proposition.
\begin{proposition}
	Assume that $s\in[n]$, then there is a universal constant $c>0$ such that
	\begin{equation}
		\inf_{\hat{\bL}} \sup_{\bfT\in\calM(1,n,s)} \esp_\bfT \big( \hat{\bL}-\bL(\bfT) \big)^2 \ge c \sigma^2 s^2 \log(1+n/s^2).
	\end{equation}
\end{proposition}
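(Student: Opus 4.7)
The plan is to carry out a Bayesian two-point argument of Le Cam type, where the difficulty of distinguishing two suitably chosen priors forces a lower bound on the estimation error of the linear functional.

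First, I would fix a value $\mu>0$ to be calibrated later and define two priors on $\calM(1,n,s)\subset\RR^n$: the prior $\pi_0$ concentrated at the zero vector, and $\pi_1$ equal to the uniform law on the set $\{\btheta\in\RR^n:\btheta=\mu\bf1_S,\ S\subset[n],\ |S|=s\}$. Under both priors the value of the functional $\bL(\btheta)=\sum_i\theta_i$ is deterministic: it equals $0$ under $\pi_0$ and $s\mu$ under $\pi_1$. A standard two-point reduction then gives, for every estimator $\hat\bL$,
\begin{equation*}
\sup_{\btheta\in\calM(1,n,s)}\esp_\btheta(\hat\bL-\bL(\btheta))^2
\ \ge\ \tfrac14(s\mu)^2\bigl(1-d_{\rm TV}(\bar P_0,\bar P_1)\bigr),
\end{equation*}
where $\bar P_j=\int P_\btheta\,d\pi_j(\btheta)$ are the Bayesian marginals. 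It thus suffices to exhibit $\mu$ proportional to $\sigma\sqrt{\log(1+n/s^2)}$ such that $d_{\rm TV}(\bar P_0,\bar P_1)$ is bounded away from $1$.

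The second step is to bound the total variation via chi-square: using $d_{\rm TV}^2\le\tfrac14\chi^2(\bar P_1,\bar P_0)$ and the Gaussian identity, if $\btheta,\btheta'$ are i.i.d.\ under $\pi_1$ with supports $S,S'$ then
\begin{equation*}
1+\chi^2(\bar P_1,\bar P_0)
= \esp\exp\!\Bigl(\langle\btheta,\btheta'\rangle/\sigma^2\Bigr)
= \esp\exp\!\Bigl(\tfrac{\mu^2}{\sigma^2}\,H\Bigr),
\end{equation*}
where $H=|S\cap S'|$ is hypergeometric with parameters $(n,s,s)$. The core computation is to control the moment generating function of $H$: by stochastic comparison with a binomial $\text{Bin}(s,s/n)$ (or by direct combinatorics) one obtains an upper bound of the form
\begin{equation*}
\esp\exp(tH)\ \le\ \Bigl(1+\tfrac{s}{n}(e^t-1)\Bigr)^{s}
\ \le\ \exp\!\Bigl(\tfrac{s^2}{n}(e^t-1)\Bigr).
\end{equation*}
Choosing $t=\mu^2/\sigma^2=c_0\log(1+n/s^2)$ for a sufficiently small absolute constant $c_0$ then makes the right-hand side bounded by, say, $5/4$, which keeps $\chi^2\le1/4$ and hence $d_{\rm TV}\le1/4$.

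Plugging this calibration back into the two-point inequality produces
\begin{equation*}
\sup_{\btheta\in\calM(1,n,s)}\esp_\btheta(\hat\bL-\bL(\btheta))^2
\ \ge\ \tfrac{3}{16}\,s^2\mu^2
\ \ge\ c\,\sigma^2 s^2\log(1+n/s^2),
\end{equation*}
which is the announced bound. The main technical obstacle is the hypergeometric moment generating-function estimate, since this is precisely where the logarithmic factor $\log(1+n/s^2)$ enters and determines the optimal scale of $\mu$; every other step is an essentially routine reduction.
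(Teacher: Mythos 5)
Your argument is correct. One important point of comparison: the paper does not actually prove this proposition --- it is imported directly from Theorem~1 of \citep{collier2017}, so there is no in-paper argument; what you have written is a self-contained reconstruction of the classical lower-bound proof behind that citation. Your route --- a two-prior (fuzzy hypotheses) reduction, the Ingster--Suslina identity $1+\chi^2(\bar P_1,\bar P_0)=\esp\exp(\langle\btheta,\btheta'\rangle/\sigma^2)$, and control of the hypergeometric moment generating function by Hoeffding's convex-order domination with $\mathrm{Bin}(s,s/n)$ --- is sound: taking the prior uniform on supports of size exactly $s$ keeps $\pi_1$ inside $\calM(1,n,s)$ and so avoids the truncation step that an i.i.d.\ Bernoulli-type sparse prior would require, at the price of exactly that hypergeometric MGF bound, which you handle correctly; the calibration $(1+x)^{c_0}-1\le c_0 x$ for $c_0\in(0,1]$ indeed gives $\chi^2\le e^{c_0}-1\le 1/4$ and $d_{\rm TV}\le 1/4$ for $c_0$ a small absolute constant. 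Two minor quibbles, neither of which affects the claim since $c$ is unspecified: the standard Markov/Neyman--Pearson reduction yields $\sup_{\btheta}\esp_\btheta(\hat\bL-\bL(\btheta))^2\ge\tfrac18(s\mu)^2(1-d_{\rm TV})$ rather than the $\tfrac14$ you state (the separation is $s\mu=2r$, and one loses a further factor from bounding the sum or max of the two error probabilities), and it would be worth one sentence confirming that the hypergeometric variable is $|S\cap S'|$ with $S'$ an independent uniform $s$-subset, which is what makes Hoeffding's comparison applicable.
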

Note that when $n=s$, this rate is of the order of $\sigma^2s$. It is straightforward that this rate generalizes to $\sigma^2sp$ in the
multidimensional case. Furthermore, if we knew in advance the sparsity pattern $S$, then we could restrict the matrix of observations to the indices in $S$, and we would get the oracle rate $\sigma^2sp$. These remarks are made formal in the following theorem.
\begin{theorem}\label{theorem_lowerbound}
	Assume that $1\le s\le n$, then there is a universal constant $c>0$ such that
	\begin{equation}
		\inf_{\hat{\bL}} \sup_{\bfT\in\calM(p,n,s)} \esp_\bfT \big\| \hat{\bL}-\bL(\bfT) \big\|^2 \ge c \Big[ \sigma^2 s^2 \log(1+n/s^2)+\sigma^2 sp \Big].
	\end{equation}
\end{theorem}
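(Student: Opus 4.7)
My plan is to lower bound the minimax risk by a constant times each of the two summands separately, using two distinct subfamilies of $\calM(p,n,s)$, and then combine them via $a+b\le 2\max(a,b)$.

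For the $\sigma^2 s^2\log(1+n/s^2)$ term, I would reduce to the one-dimensional case covered by the preceding proposition. Given any $\bfT^{(1)}\in\calM(1,n,s)$, build an embedded matrix $\bfT\in\calM(p,n,s)$ by placing $\bfT^{(1)}$ in the first row and zeros in the remaining $p-1$ rows, so that $\bL(\bfT)=(\bL(\bfT^{(1)}),0,\ldots,0)^\top$. Any estimator $\hat{\bL}$ of the multidimensional functional induces a real-valued estimator $\hat{\bL}_1$ of the 1D functional, and since rows $2,\ldots,p$ of $\bfY$ are pure noise independent of $\bfT$ they can be integrated out without loss. Hence
\[
  \sup_{\bfT\in\calM(p,n,s)}\esp_\bfT\|\hat{\bL}-\bL(\bfT)\|^2
  \;\ge\; \sup_{\bfT^{(1)}\in\calM(1,n,s)}\esp_{\bfT^{(1)}}\bigl(\hat{\bL}_1-\bL(\bfT^{(1)})\bigr)^2,
\]
and the one-dimensional proposition delivers the lower bound $c\sigma^2 s^2\log(1+n/s^2)$.

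For the $\sigma^2 sp$ term, I would use a Bayesian argument on an \emph{oracle} subfamily. Restrict attention to $\bfT$ whose sparsity pattern is $\{1,\ldots,s\}$ and endow the first $s$ columns with the independent prior $\bt_i\simiid\calN(\mathbf{0}_p,\tau^2\bfI_p)$, while fixing $\bt_{s+1}=\cdots=\bt_n=\mathbf{0}_p$. Under this prior, $\bfT\in\calM(p,n,s)$ almost surely and $\bL(\bfT)=\sum_{i=1}^{s}\bt_i$. By Gaussian conjugacy the posteriors $\bt_i\mid\bY_i\sim\calN\!\bigl(\tfrac{\tau^2}{\tau^2+\sigma^2}\bY_i,\tfrac{\tau^2\sigma^2}{\tau^2+\sigma^2}\bfI_p\bigr)$ are independent across $i\le s$, so the posterior variance of $\bL$ equals $s\tfrac{\tau^2\sigma^2}{\tau^2+\sigma^2}\bfI_p$ and the Bayes risk is $sp\cdot\tfrac{\tau^2\sigma^2}{\tau^2+\sigma^2}$. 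Since the Bayes risk lower-bounds the minimax risk whenever the prior is supported in the parameter set, letting $\tau\to\infty$ yields the bound $sp\sigma^2$.

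Combining the two estimates via $a+b\le 2\max(a,b)$ yields the claim. I do not foresee a serious technical obstacle: the 1D embedding, the Gaussian posterior computation, and the combination step are all standard. The only point that requires care is checking that both constructions indeed take values in $\calM(p,n,s)$, which is immediate in each case since Gaussian measure assigns mass zero to the event $\{\bt_i=\mathbf{0}_p\}$.
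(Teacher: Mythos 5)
Your proposal is correct and follows essentially the argument the paper itself relies on: the $\sigma^2 s^2\log(1+n/s^2)$ part by reduction to the one-dimensional lower bound of the preceding proposition (via embedding into the first row), and the $\sigma^2 sp$ part as the parametric/oracle rate for a known sparsity pattern, which you formalize with a standard Gaussian conjugate-prior Bayes risk computation before combining the two bounds. No gaps worth flagging.
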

Therefore, the greedy subset selector in \Cref{subsection_GSS} is
provably rate-optimal in the case $s=O(\sqrt{n})$. A question that
remains open is the rate optimality when $\sqrt{n}=O(s)$. The lower bound
of \Cref{theorem_lowerbound} is then of order $\sigma^2(n+sp)$, whereas the
upper bound of \Cref{theorem_GSS} is of order $\sigma^2(s^2+sp)$. Taking into
account the fact that the naive estimator $\bL(\bfY)$ has a risk of order
$\sigma^2 np$, we get that the minimax risk is upper bounded by $\sigma^2(s^2\wedge
np+sp)$.Thus, there is a gap of order $p$ when $p+\sqrt{n} = O(s)$.

\begin{center}
\begin{minipage}{0.95\textwidth}
\centering
\begin{algorithm}[H]
\setstretch{1.2}
\SetAlgoLined
\SetKwInOut{Input}{input}\SetKwInOut{Output}{output}
\Input{matrix $\bfY$, noise variance $\sigma^2$ and confidence level $\d$.}
\Output{vector $\hat\bL{\!\!\!}^{\rm adGSS}$.}
Set ${\rm dist} \gets \sigma^{-1}\|\hat\bL{}^{\rm GSS}-\bL(\bfY)\|_2$.\;
Set $\l \gets \nicefrac32 \log(\nicefrac{4n}{\d})$.\;
Set
$\hat s \gets \min\Big\{k\in[n]: {\rm dist}\le \sqrt{60k(p+\l k)}+\sqrt{n(2p+3\log(2/\delta))}\Big\}$
(if the set is empty, set $\hat s \gets n$)\;
\eIf{$60\hat s(p+\l \hat s)\le 2np+3n\log(2/\delta)$}
{$\hat\bL{}^{\rm adGSS}\gets \hat\bL{}^{\rm GSS}$\;}
{$\hat\bL{}^{\rm adGSS}\gets \bL(\bfY)$\;}
\Return{$\hat\bL{}^{\rm adGSS}$}
\caption{Adaptive GSS}
\label{algo:2}
\end{algorithm}
\end{minipage}
\end{center}

Note that none of the estimators discussed earlier in this work attain the
upper bound $\sigma^2(s^2\wedge np+sp)$; indeed, the latter is obtained as the
minimum of the risk of two estimators. Interestingly, one can design a single
estimator that attains this rate. Previous sections contain all the necessary
ingredients for this. We will illustrate the trick in the case of the GSS
estimator, but similar technique can be applied to any estimator for which an
``in deviation'' risk bound is established.

The idea is to combine the GSS estimator and the naive estimator
$\hat\bL=\bL(\bfY)$, with the aim of choosing the ``best'' one.
The combination can be performed using the Lepski method \citep{Lepski91},
also known as intersection of confidence intervals \citep{Golden}.
The method is described in \Cref{algo:2}. The construction is
based on the following two facts:
\begin{enumerate}
    \item The true value $\bL(\bTheta)$ lies with probability $1-\d/2$ in the
    ball $\mathcal B(\bL(\bfY);r_1)$ with $(r_1/\s)^2={2np+3n\log(2/\delta)}$.
    \item The true value $\bL(\bTheta)$ lies with probability $1-\d/2$ in the
    ball $\mathcal B(\hat\bL{}^{\rm GSS};r_2)$ with $(r_2/\s)^2=60s(p+\l s)$ (cf.\ \Cref{theorem_GSS}).
\end{enumerate}
These two facts imply that with probability at least $1-\d$ the balls
$\mathcal B(\bL(\bfY);r_1)$ and $\mathcal B(\hat\bL{}^{\rm GSS};r_2)$
have nonempty intersection. As a consequence, in this event, we have
$\|\bL(\bfY)-\hat\bL{}^{\rm GSS}\|_2 \le r_1+r_2$ and, therefore, $\hat s\le s$.
Now, if $60\hat s(p+\l \hat s)\le 2np+3n\log(2/\delta)$, then
$\hat\bL{}^{\rm adGSS}=\hat\bL{}^{\rm GSS}$ and we have
\begin{align}
    \|\hat\bL{}^{\rm adGSS}-\bL(\bTheta)\|_2 =  \|\hat\bL{}^{\rm GSS}-\bL(\bTheta)\|_2 \le r_2
\end{align}
along with
\begin{align}
    \|\hat\bL{}^{\rm adGSS}-\bL(\bTheta)\|_2
        &=  \|\hat\bL{}^{\rm GSS}-\bL(\bfY)\|_2 +\|\bL(\bfY)-\bL(\bTheta)\|_2\\
        &\le \{\sigma\sqrt{60\hat s(p+\l\hat s)}+ r_1\} +r_1\le 3r_1.
\end{align}
Thus, $\|\hat\bL{}^{\rm adGSS}-\bL(\bTheta)\|_2 \le 3 (r_1\wedge r_2)$. In the second case, $60\hat s(p+\l \hat s)\ge 2np+3n\log(2/\delta)$, we have
$\|\hat\bL{}^{\rm adGSS}-\bL(\bTheta)\|_2 =  \|\bL(\bfY)-\bL(\bTheta)\|_2 \le r_1=r_1\wedge r_2$, where the last equality follows from the fact that
$\hat s\le s$. Thus, we have established the following result.

\begin{proposition}
Let $\d\in(0,1)$ be a prescribed confidence level. With probability at least
$1-\d$, the adaptive greedy subset selection estimator
$\hat\bL{}^{\rm adGSS}$ defined in \Cref{algo:2} satisfies
$\|\hat\bL{}^{\rm adGSS}-\bL(\bTheta)\|_2 \le 3\s
\big\{(60sp+90 s^2\log(4n/\delta))\wedge (2np+3n\log(2/\delta))\big\}^{1/2}$.
\end{proposition}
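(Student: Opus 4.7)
The plan is to prove this by combining two ``in deviation'' confidence statements and then analyzing the two branches of \Cref{algo:2}, in the spirit of the Lepski method. First I would establish the two facts labeled in the excerpt. Fact~1 follows from the observation that $\bL(\bfY)-\bL(\bTheta)=\sigma\bXi\mathbf 1_n$ is a centered Gaussian vector in $\RR^p$ with covariance $n\sigma^2\bfI_p$, so $\|\bL(\bfY)-\bL(\bTheta)\|_2^2/(n\sigma^2)\sim\chi^2_p$, and a Laurent--Massart type tail bound (recalled in \Cref{section_tails}) produces $\|\bL(\bfY)-\bL(\bTheta)\|_2\le r_1:=\sigma\sqrt{n(2p+3\log(2/\delta))}$ with probability at least $1-\delta/2$. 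Fact~2 is a direct application of \Cref{theorem_GSS} at tolerance level $\delta/2$: with the choice $\lambda=\tfrac32\log(4n/\delta)$ prescribed in \Cref{algo:2}, this gives $\|\hat\bL{}^{\rm GSS}-\bL(\bTheta)\|_2\le r_2:=\sigma\sqrt{60s(p+\lambda s)}$ with probability at least $1-\delta/2$.

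On the intersection event, of probability at least $1-\delta$ by the union bound, I would exploit the triangle inequality $\|\hat\bL{}^{\rm GSS}-\bL(\bfY)\|_2\le r_1+r_2$ to deduce that the quantity $\mathrm{dist}$ computed in \Cref{algo:2} does not exceed $\sigma^{-1}(r_1+r_2)=\sqrt{60s(p+\lambda s)}+\sqrt{n(2p+3\log(2/\delta))}$. Since the threshold in the definition of $\hat s$ takes exactly this value at $k=s$, monotonicity forces $\hat s\le s$. This key intermediate inequality drives the remainder of the argument.

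The last step is the case analysis dictated by the algorithm. If $60\hat s(p+\lambda\hat s)\le 2np+3n\log(2/\delta)$, then $\hat\bL{}^{\rm adGSS}=\hat\bL{}^{\rm GSS}$: Fact~2 together with $\hat s\le s$ bounds the error by $r_2$, while writing $\|\hat\bL{}^{\rm GSS}-\bL(\bTheta)\|_2\le\|\hat\bL{}^{\rm GSS}-\bL(\bfY)\|_2+\|\bL(\bfY)-\bL(\bTheta)\|_2$ and using the case hypothesis with Fact~1 yields a bound by $3r_1$; combining the two gives $3(r_1\wedge r_2)$. In the complementary case, $\hat\bL{}^{\rm adGSS}=\bL(\bfY)$, and Fact~1 alone gives a bound $r_1$; the case hypothesis combined with $\hat s\le s$ forces $r_1\le r_2$, so $r_1=r_1\wedge r_2$. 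Substituting the chosen $\lambda$ gives $r_2^2=\sigma^2(60sp+90 s^2\log(4n/\delta))$, which identifies $3(r_1\wedge r_2)$ with the bound stated in the proposition.

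The argument is essentially deterministic once the two concentration facts are in place. The only subtlety I expect is the bookkeeping: choosing the tolerance split so that the threshold defining $\hat s$ in \Cref{algo:2} aligns exactly with the sum of the deviation radii $r_1$ and $r_2$, which is what ensures both $\hat s\le s$ and the coincidence of the two case-wise bounds with the uniform quantity $3(r_1\wedge r_2)$.
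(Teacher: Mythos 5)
Your proposal is correct and follows essentially the same route as the paper: the two deviation facts (chi-squared bound for $\bL(\bfY)$ and \Cref{theorem_GSS} at level $\delta/2$), the deduction $\hat s\le s$ from the nonempty intersection of the two confidence balls, and the case analysis yielding the bound $3\,(r_1\wedge r_2)$ with $r_2^2=\sigma^2(60sp+90s^2\log(4n/\delta))$. The only point worth making explicit is that the $3r_1$ bound in the first case also uses the defining inequality ${\rm dist}\le\sqrt{60\hat s(p+\l\hat s)}+\sqrt{n(2p+3\log(2/\delta))}$ at $k=\hat s$, which is implicit in your argument.
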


Let us  summarize the content of this section. We have established a
lower bound on the minimax risk, showing that the latter is at least of
order $sp+s^2\wedge n$, up to a logarithmic factor. We have also
obtained upper bounds, which imply that the minimax risk is at most
of order $sp+s^2\wedge (np)$. Furthermore, this rate can be attained by a single estimator (adaptive greedy subset selection).

%\newpage
\section{The problem of robust estimation}\label{section_robust}

The problem of linear functional estimation considered in the previous section
has multiple connections with the problem of robust estimation of a Gaussian mean.
In the latter problem, the observations $\bY_1,\ldots,\bY_n$ in $\RR^p$ are assumed
to satisfy
\begin{equation}\label{definition_observation}
\bY_i = \bmu +\btheta_i+\sigma\bxi_i, \quad \bxi_i\simiid \mathcal{N}(\mathbf 0,\bfI_p),
\end{equation}
where $\bfI_p$ is the identity matrix of dimension $p\times p$.
We are interested in estimating the vector $\bmu$, under the assumption that
most vectors $\btheta_i$ are equal to zero. All the observations $\bY_i$ such that
$i\in S=\{\ell:\|\btheta_\ell\|_2=0\}$ are considered as inliers, while all the others
are outliers. In this problem, the
vectors $\bt_i$ are unknown, but their estimation is not our primary aim. They are
rather considered as nuisance parameters. In some cases, it might be helpful to
use the matrix notation of \eqref{definition_observation}:
\begin{equation}\label{definition_observation1}
\bfY = \bmu\mathbf 1_n^\top +\bTheta+\sigma\bXi.
\end{equation}
The obvious connection with the problem considered in the previous section is
that if we know that $\bmu=\mathbf 0_p$ in \eqref{definition_observation}, then
we recover model \eqref{model1}. This can be expressed in a more formal way as
shown in the next proposition.

\begin{proposition}
The problem of estimating the linear functional $\bL_n(\bTheta) = (\nicefrac1n)\sum_{i\in[n]}
\bt_i$ in model \eqref{definition_observation1} is not easier, in the minimax sense, than that of estimating $\bmu$. More precisely, we have
\begin{align}
\frac{\sigma^2 p}{n}\le \inf_{\hat\bmu}\sup_{\bmu,\bTheta} \esp[\|\hat\bmu-\bmu\|_2^2]\le
2\inf_{\hat\bL_n}\sup_{\bTheta} \esp[\|\hat\bL_n-\bL_n(\bTheta)\|_2^2]
+\frac{2\sigma^2 p}{n},
\end{align}
where the sup in the left-hand side and in the right-hand side are taken,
respectively, over all $\bTheta\in\calM(p,n,s)$ and over all $(\bmu,\bTheta)\in
\RR^p\times\calM(p,n,s)$.
\end{proposition}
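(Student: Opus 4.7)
The statement combines a lower bound and an upper bound on the minimax risk of estimating $\bmu$, which I plan to treat separately. For the lower bound, the idea is to restrict the supremum to the sub-family $\{\bTheta = \mathbf 0\}$; under this restriction, model~\eqref{definition_observation1} collapses to $n$ iid observations $\bY_i \sim \calN(\bmu, \sigma^2\bfI_p)$, for which the minimax risk of estimating $\bmu$ is the classical quantity $\sigma^2 p/n$ attained by the sample mean. Since restricting the parameter set can only shrink the supremum, the lower bound follows immediately.

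For the upper bound, the plan is an explicit reduction that manufactures an estimator of $\bmu$ out of any estimator $\hat\bL_n$ of $\bL_n(\bTheta)$. Given such a $\hat\bL_n$ computed from the data $\bfY$, define
\begin{equation*}
\hat\bmu := \bar\bY - \hat\bL_n, \qquad \bar\bY := \tfrac1n\,\bfY\bf1_n .
\end{equation*}
A direct computation yields $\bar\bY = \bmu + \bL_n(\bTheta) + \bar\bxi$ with $\bar\bxi \sim \calN(\mathbf 0_p, (\sigma^2/n)\bfI_p)$, so that $\hat\bmu - \bmu = -\bigl(\hat\bL_n - \bL_n(\bTheta)\bigr) + \bar\bxi$. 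Combining the elementary inequality $\|u+v\|_2^2 \le 2\|u\|_2^2 + 2\|v\|_2^2$ with $\esp[\|\bar\bxi\|_2^2] = \sigma^2 p/n$ gives
\begin{equation*}
\esp\bigl[\|\hat\bmu-\bmu\|_2^2\bigr] \le 2\,\esp\bigl[\|\hat\bL_n-\bL_n(\bTheta)\|_2^2\bigr] + \frac{2\sigma^2 p}{n}
\end{equation*}
pointwise in $(\bmu,\bTheta)$. Taking the supremum over the parameter set on both sides, then the infimum over $\hat\bL_n$ on the right, and using the fact that every $\hat\bL_n$ produces a valid candidate $\hat\bmu$ for the left-hand infimum, completes the proof of the upper bound.

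The argument is essentially bookkeeping; the only insight required is that $\bar\bY$ is an unbiased estimator of the known combination $\bmu + \bL_n(\bTheta)$ with residual Gaussian error of variance $\sigma^2/n$ per coordinate, so subtracting any estimator of $\bL_n(\bTheta)$ automatically yields an estimator of $\bmu$ whose excess risk over that of $\hat\bL_n$ amounts only to this Gaussian term plus the multiplicative factor~$2$ inherited from the triangle inequality squared. No genuine obstacle arises; at most one needs to be careful with the exact parameter sets entering the two suprema, but the bound is stable under any interpretation consistent with model~\eqref{definition_observation1}.
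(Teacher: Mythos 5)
Your proposal is correct and follows essentially the same route as the paper: the lower bound by restricting to $\bTheta=\mathbf 0$ and invoking the minimaxity of the sample mean in the Gaussian location model, and the upper bound by setting $\hat\bmu=\bL_n(\bfY)-\hat\bL_n$, decomposing the error as $-(\hat\bL_n-\bL_n(\bTheta))+\sigma\bL_n(\bXi)$ and applying $\|u+v\|_2^2\le 2\|u\|_2^2+2\|v\|_2^2$ together with $\esp[\|\bL_n(\bXi)\|_2^2]=p/n$. Even your closing caveat about which parameters enter the two suprema mirrors the (slightly loose) bookkeeping in the paper's own argument, so nothing further is needed.
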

\begin{proof}
The first inequality is a consequence of the fact that when all the entries of
$\bTheta$ are zero, the optimal estimator of $\bmu$ in the minimax sense is the
sample mean of $\bY_i$'s. To prove the second inequality, let $\hat\bL_n$ be an
estimator of $\bL_n(\bTheta)$. We can associate with $\hat\bL_n$
the following estimator of $\bmu$: $\hat\bmu(\hat\bL_n) = \bL_n(\bfY)
-\hat\bL_n$. These estimators satisfy
\begin{align}
\esp[\|\hat\bmu(\hat\bL_n)-\bmu\|_2^2]
		&= \esp[\|\bL_n(\bfY)-\hat\bL_n-n\bmu\|_2^2]\\
		&=\esp[\|\bL_n(\bTheta)+\sigma\bL_n(\bXi)-\hat\bL_n\|_2^2]\\
		&\le 2\esp[\|\bL_n(\bTheta)-\hat\bL_n\|_2^2] + {2\sigma^2}\esp[\|\bL_n(\bXi)\|_2^2].
\end{align}
Since $\bL_n(\bXi)$ is drawn from the Gaussian distribution $\mathcal N_p(\mathbf 0
,(\nicefrac1n)\bfI_p)$, we have  $\esp[\|\bL_n(\bXi)\|_2^2] = p\sigma^2/n$ and the claim of the
proposition follows.
\end{proof}

Another important point that we would like to mention here is the relation between
model \eqref{definition_observation} and the Huber contamination model \citep{huber1964}
frequently studied in the statistical literature (we refer the reader to
\cite{Gao2015,chen2016} for recent overviews). Recall that in Huber's contamination model, 
the observations $\bX_1,\ldots,\bX_n$ are $n$ iid $p$-dimensional vectors drawn 
from the mixture distribution $(1-\frac{s}{n}) \mathcal N_p(\bmu,\bfI_p) + 
\frac{s}{n}\,\bfQ$. The particularity of this model is that it assumes all the 
outliers to be generated by the same distribution $\bfQ$; the latter, however, 
can be an arbitrary distribution on $\RR^p$. In contrast with this, our model 
\eqref{definition_observation} allows for a wider heterogeneity of the outliers. 
On the downside, our model assumes that the outliers are blurred by a
Gaussian noise that has the same covariance structure as the noise that corrupts the
inliers. The relation between these two models is formalized in the next result.

\begin{proposition}
Let $\hat\bmu:\RR^{p\times n}\to\RR^p$ be an estimator of $\bmu$ that can be applied
both to the data matrix $\bfX = [\bX_1,\ldots,\bX_n]$ from Huber's model and to $\bfY$ 
from our model \eqref{definition_observation}. Then, we have
\begin{align}
\underbrace{\sup_{\bfQ}\esp_{\bmu,\bfQ}[\|\hat\bmu(\bfX)-\bmu\|_2^2]}
_{\text{\rm risk in the Huber model}}\le
\esp_{\hat s\sim \mathcal B(n,s/n)}
\bigg[\underbrace{\sup_{\bTheta\in\calM(n,p,\hat s)}
\esp_{\bmu,\bTheta}\big[\|\hat\bmu(\bfY)-\bmu\|_2^2\big]
}_{\text{\rm risk in our model \eqref{definition_observation}}}\bigg].
\end{align}
The supremum of the left-hand side is over all probability distributions $\bfQ$ on $\RR^p$
such that\footnote{We denote by $*$ the convolution of the distributions.} 
$\bfQ = \bfQ_0 * \mathcal N_p(\mathbf 0, \sigma^2\bfI_p)$, 
while the notation $\mathcal B(n,s/n)$ stands for the binomial distribution.
\end{proposition}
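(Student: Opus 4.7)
The plan is to realize the Huber-contaminated data $\bfX$ as a particular instance of model \eqref{definition_observation} by introducing latent Bernoulli indicators that flag the outliers, and then argue by conditioning. First, I would exploit the convolution hypothesis $\bfQ = \bfQ_0 * \mathcal N_p(\mathbf 0, \sigma^2 \bfI_p)$ to construct an explicit coupling: draw $Z_1,\ldots,Z_n$ iid Bernoulli with parameter $s/n$, independent of $\bs T_1',\ldots,\bs T_n'$ drawn iid from $\bfQ_0$, themselves independent of $\bxi_1,\ldots,\bxi_n \simiid \mathcal N_p(\mathbf 0,\bfI_p)$. Setting $\bt_i := Z_i(\bs T_i' - \bmu)$ and $\bX_i := \bmu + \bt_i + \sigma\bxi_i$ yields data with the correct Huber mixture law: $\bX_i \sim \mathcal N_p(\bmu,\sigma^2\bfI_p)$ when $Z_i=0$, and $\bX_i$ has law $\bfQ_0 * \mathcal N_p(\mathbf 0,\sigma^2\bfI_p) = \bfQ$ when $Z_i=1$, with the $\bX_i$'s iid across $i$.

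Second, I would condition on the latent pair $(\bZ,\bs T')=(Z_1,\ldots,Z_n,\bs T_1',\ldots,\bs T_n')$. This fixes the nuisance matrix $\bTheta=[\bt_1,\ldots,\bt_n]$, whose number of non-zero columns is exactly $\hat s := \sum_{i=1}^n Z_i \sim \mathcal B(n,s/n)$. Hence $\bTheta\in\calM(n,p,\hat s)$ almost surely, and the conditional law of $\bfX$ given $(\bZ,\bs T')$ coincides with the law of $\bfY$ in model \eqref{definition_observation1} with this very $\bTheta$.

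Third, a tower-property calculation together with a uniform bound over admissible nuisance matrices yields
\begin{align*}
\esp_{\bmu,\bfQ}[\|\hat\bmu(\bfX)-\bmu\|_2^2]
   &= \esp_{(\bZ,\bs T')}\Big[\esp_{\bxi}\big[\|\hat\bmu(\bfX)-\bmu\|_2^2 \,\big|\, \bZ,\bs T'\big]\Big]\\
   &\le \esp_{(\bZ,\bs T')}\bigg[\sup_{\bTheta''\in\calM(n,p,\hat s)}\esp_{\bmu,\bTheta''}\big[\|\hat\bmu(\bfY)-\bmu\|_2^2\big]\bigg].
\end{align*}
Since the bracketed quantity depends on $(\bZ,\bs T')$ only through $\hat s$, the outer expectation reduces to an expectation with respect to $\hat s\sim \mathcal B(n,s/n)$, which is precisely the right-hand side of the proposition. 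Taking the supremum over admissible $\bfQ$ (equivalently over $\bfQ_0$) on the left-hand side then closes the argument.

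The main obstacle is conceptual rather than technical: one has to share a single Gaussian noise $\bxi_i$ between the inlier and outlier regimes so that both fit into the additive form $\bmu + \bt_i + \sigma\bxi_i$. This coupling is made possible \emph{exactly} by the convolution assumption $\bfQ = \bfQ_0 * \mathcal N_p(\mathbf 0,\sigma^2\bfI_p)$; without it the outlier distribution need not decompose in this way and the inequality could fail. Once the coupling is in place, the remaining steps are routine conditioning and Fubini, and no further probabilistic estimates (e.g.\ concentration of $\hat s$) are needed because the statement retains the Binomial expectation as is.
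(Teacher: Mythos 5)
Your argument is correct, and since the paper leaves this proposition as an exercise, your coupling construction—writing $\bX_i=\bmu+Z_i(\bs T_i'-\bmu)+\sigma\bxi_i$ with $Z_i\sim\mathcal B(1,s/n)$ and $\bs T_i'\sim\bfQ_0$, which is exactly where the assumption $\bfQ=\bfQ_0*\mathcal N_p(\mathbf 0,\sigma^2\bfI_p)$ is needed—followed by conditioning on $(\bZ,\bs T')$ and bounding by the supremum over $\bTheta\in\calM(p,n,\hat s)$ with $\hat s=\sum_i Z_i\sim\mathcal B(n,s/n)$, is precisely the intended argument. No gaps: the conditional law of $\bfX$ given the latent variables coincides with model \eqref{definition_observation1} for the realized $\bTheta$, and the tower property plus the fact that the inner supremum depends on the latent variables only through $\hat s$ give the claimed bound.
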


The proof of this proposition is a simple exercise and is left to the reader.
Although some statistical problems of robust estimation in a framework of the
same spirit as \eqref{definition_observation} have been already tackled in
the literature \citep{DK12,DalalyanC12,BalDal15b,Nguyen, Klopp2017, CherapanamjeriG16}, the entire picture in terms
of matching upper and lower bounds is not yet available. On the other side,
it has been established in \citep{Gao2015} that the minimax rate of estimating
$\bmu$ in Huber's contamination model is
\begin{align}
r^{\rm all}_{\rm mmx}({n,p,s}) = \sigma^2\Big(\frac{p}{n}\vee \frac{s^2}{n^2}\Big).    
\end{align}
It is shown that this rate is achieved by the Tukey median, \textit{i.e.}, 
the minimizer of Tukey's depth. An important observation is that the evaluation of
Tukey's median is a hard computational problem: there exists no algorithm to 
date capable of approximating Tukey's median in a number of operations that scales 
polynomially in $p,n$ and the approximation precision. The best known
computationally tractable robust estimator, the element-wise median, 
has a rate of order \citep[Prop.\ 2.1]{Gao2015}
\begin{align}
\sigma^2\Big(\frac{p}{n}\vee \frac{s^2p}{n^2}\Big).    
\end{align}
We shall show in this section that a suitable adaptation of the
group soft thresholding estimator presented in the previous section
leads to a rate that can be arbitrarily close to
\begin{align}
\sigma^2\Big(\frac{p}{n}\vee\frac{s^2}{n^2}\vee \frac{s^4p}{n^4}\Big).
\end{align}
This shows that if we restrict our attention to the estimators that
have a computational complexity that is at most polynomial, the minimax
rate satisfies, for every $\nu\in(0,1/\log p)$, 
\begin{align}
\sigma^2\Big(\frac{p}{n}\vee \frac{s^2}{n^2}\Big)\lesssim 
r^{\rm poly}_{\rm mmx}({n,p,s})\lesssim 
\sigma^2\Big(\frac{p}{n}\vee\frac{s^2}{n^2}\vee \Big\{\frac{s^4p}{n^4}\Big\}^{1-\nu}\Big),
\end{align}
where $\lesssim$ means inequality up to logarithmic factors.

\subsection{Maximum of profile likelihood with group lasso penalty}

A computationally tractable estimator that allows to efficiently deal 
with structured sparsity and has provably good statistical complexity
is the group lasso \citep{Yuan_Lin_2006,Lin_2006,Hebiri_2008,Meier_2009,lounici2011}. 
We define the group-lasso estimator by
\begin{align}\label{definition_hatmu}
(\hat\bmu,\hat\bfT) \in \arg\min_{\bm,\mathbf{T}}
\Big\{ \sum_{i=1}^n \| \bY_i -\bm - \boldsymbol{t}_i \|_2^2 + \sum_{i=1}^n \l_i \|\boldsymbol{t}_i\|_2 \Big\}.
\end{align}
where the $\l_i$ are some positive numbers to be defined later. The estimator $\hat\bmu$ can
be seen as the maximum of a profile penalized likelihood, where the penalty is proportional 
to the $\ell_{2,1}$ norm (also known as the group lasso penalty) of the nuisance parameter 
$\bTheta$. The above optimization problem is convex and can be solved numerically even when 
the dimension and the sample size are large. It is also well known that $\hat\bmu$ from 
\eqref{definition_hatmu} is exactly the Huber M-estimator \citep[Section 6]{Donoho2016}. 
In addition, these estimators can
also be written as
\begin{align}
	\hat{\bmu} &= \frac1n \sum_{i=1}^{n} \big( \bY_i - \hat{\bt}_i \big)
	=\bL_n(\bfY)-\bL_n(\hat\bTheta), \\
	\hat\bfT &\in \arg\min_{\mathbf{T}} \Big\{ \sum_{j=1}^p \|\bfPi(\bY^j-\boldsymbol{t}^j) \|_2^2 + \sum_{i=1}^n \l_i \|\boldsymbol{t}_i\|_2 \Big\}, \label{definition_groupestimator}
\end{align}
where $\bfPi$ denotes the orthogonal  projection in $\RR^n$ 
onto the orthogonal complement of the constant vector $\mathbf 1_n$. 
Unfortunately, we were unable to establish a risk bound for this estimator
that improves on the element-wise median. The best result that we get is the following.
\begin{theorem}\label{theoreme_bornesuperieure1}
	Consider the estimators of $\bTheta$ and $\bmu$ defined in~\eqref{definition_hatmu} with $\lambda^2 = 32\sigma^2 p + 256\sigma^2 \log(n/\delta)$.
	Then, with probability at least $1-\d$ and provided that 
	$s\le n/32$, we have
	\begin{align}\label{theoreme_bornesuperieure1_1}
	\|\bTheta-\hat\bTheta\|_F^2 &\le 9 s\lambda^2,\quad 
	\|\bL_n(\hat\bTheta)-\bL_n(\bTheta)\|_2^2 \le \frac{288 s^2\lambda^2}{n^2}\\
	\|\hat\bmu-\bmu\|_2^2 &\le \frac{288 s^2\lambda^2}{n^2}+\frac{4\sigma^2 p}{n}
	+\frac{8\sigma^2\log(2/\delta)}{n}.
	\end{align}
\end{theorem}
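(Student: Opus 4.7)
The plan is to work with the profile formulation~\eqref{definition_groupestimator} of the estimator, namely $\hat\bfT\in\arg\min_{\mathbf T}\bigl\{\|(\bfY-\mathbf T)\bfPi\|_F^2 + \lambda\sum_i\|\bt_i\|_2\bigr\}$, with $\hat\bmu = \bL_n(\bfY) - \bL_n(\hat\bfT)$ coming from the first-order condition in $\bm$. The crucial cancellation $\bmu\mathbf 1_n^\top \bfPi = \mathbf 0$ removes the unknown mean from the residual: $(\bfY-\bTheta)\bfPi = \sigma\bXi\bfPi$. Comparing the objective at $\hat\bfT$ to its value at $\bTheta$ and expanding the quadratic term produces the basic inequality
\begin{align*}
\|\bDelta\bfPi\|_F^2 \le 2\sigma\sum_{i=1}^n \langle\bDelta_i,\bxi_i-\bL_n(\bXi)\rangle + \lambda\bigl(\|\bDelta_S\|_{2,1}-\|\bDelta_{S^c}\|_{2,1}\bigr),
\end{align*}
where $\bDelta = \hat\bfT - \bTheta$, $S\subseteq[n]$ is the support of $\bTheta$ (so $|S|\le s$), and the centred noise $\bxi_i-\bL_n(\bXi)$ appears as the $i$-th column of $\bXi\bfPi$ since $\bfPi = \bfI_n - (1/n)\mathbf 1_n\mathbf 1_n^\top$.

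Next I control the noise term on a high-probability event. Since each $\bxi_i - \bL_n(\bXi)$ is Gaussian with covariance $(1-1/n)\bfI_p$, the $\chi^2_p$ deviation inequalities of \Cref{section_tails} and a union bound over $i\in[n]$ give $\max_i\|\bxi_i - \bL_n(\bXi)\|_2^2 \le 2p + 16\log(n/\delta)$ with probability at least $1-\delta/2$. The choice $\lambda^2 = 32\sigma^2 p + 256\sigma^2\log(n/\delta)$ is precisely calibrated so that $4\sigma\max_i\|\bxi_i-\bL_n(\bXi)\|_2\le \lambda$ on this event, whence H\"older's inequality yields $2\sigma\sum_i\langle\bDelta_i,\bxi_i-\bL_n(\bXi)\rangle \le (\lambda/2)\|\bDelta\|_{2,1}$. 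Substituting into the basic inequality and splitting $\|\bDelta\|_{2,1}=\|\bDelta_S\|_{2,1}+\|\bDelta_{S^c}\|_{2,1}$ gives
\begin{align*}
\|\bDelta\bfPi\|_F^2 \le \tfrac{3\lambda}{2}\|\bDelta_S\|_{2,1} - \tfrac{\lambda}{2}\|\bDelta_{S^c}\|_{2,1}.
\end{align*}
Non-negativity of the left-hand side then delivers the cone condition $\|\bDelta_{S^c}\|_{2,1}\le 3\|\bDelta_S\|_{2,1}$, and Cauchy-Schwarz yields $\|\bDelta\|_{2,1}\le 4\sqrt{s}\|\bDelta\|_F$.

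The last step is to convert $\|\bDelta\bfPi\|_F^2$ into $\|\bDelta\|_F^2$. The identity $\|\bDelta\|_F^2 = \|\bDelta\bfPi\|_F^2 + n\|\bL_n(\bDelta)\|_2^2$ combined with $n\|\bL_n(\bDelta)\|_2^2 \le \|\bDelta\|_{2,1}^2/n \le 16s\|\bDelta\|_F^2/n$ yields $\|\bDelta\bfPi\|_F^2 \ge \tfrac12 \|\bDelta\|_F^2$ whenever $s\le n/32$. Combined with $\|\bDelta\bfPi\|_F^2 \le (3\lambda\sqrt{s}/2)\|\bDelta\|_F$, this produces the first bound $\|\bDelta\|_F^2 \le 9s\lambda^2$; the second bound is then immediate from $\|\bL_n(\bDelta)\|_2\le \|\bDelta\|_{2,1}/n \le 4\sqrt{s}\|\bDelta\|_F/n$. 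For the mean, I would write $\hat\bmu-\bmu = -\bL_n(\bDelta) + \sigma\bL_n(\bXi)$, apply $\|a+b\|_2^2\le 2\|a\|_2^2+2\|b\|_2^2$, and control $\|\bL_n(\bXi)\|_2^2 \sim (1/n)\chi^2_p$ by a further deviation bound valid with probability $\ge 1-\delta/2$; the three summands of the third estimate then arise directly. Intersecting the two concentration events delivers the announced probability $1-\delta$.

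The main obstacle is the passage from the projected Frobenius norm $\|\bDelta\bfPi\|_F^2$, which is the only thing naturally controlled by the profile objective, to the unprojected $\|\bDelta\|_F^2$ appearing in the first claim: \emph{a priori} $\bfPi$ annihilates the direction $\mathbf 1_n$ and the basic inequality carries no information about $\bL_n(\bDelta)$. The cone condition combined with the column sparsity bound $\|\bDelta\|_{2,1}\le 4\sqrt{s}\|\bDelta\|_F$ is precisely what saves the day, but this trick works only when $s\le n/32$, which is exactly the hypothesis of the theorem.
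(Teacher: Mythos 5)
Your proposal is correct and follows essentially the same route as the paper: the basic inequality from the profiled objective, the event $\max_i\|(\bXi\bfPi)_i\|_2^2\le 2p+16\log(n/\delta)$ calibrated so that $\lambda\ge 4\sigma\max_i\|(\bXi\bfPi)_i\|_2$, the cone condition with constant $3$, and the comparison $\|\bDelta\bfPi\|_F^2\ge\tfrac12\|\bDelta\|_F^2$ under $s\le n/32$, which is exactly Lemma~\ref{lemme_Pi} (with $a=3$) that you re-derive inline via $\|\bDelta\|_F^2=\|\bDelta\bfPi\|_F^2+n\|\bL_n(\bDelta)\|_2^2$. The only differences are at the level of constants and the $\delta/2+\delta/2$ probability splitting (which, if anything, is slightly tidier than the paper's accounting), so nothing substantive to change.
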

This result, proved in \Cref{proofs_robust}, shows that the rate of the profiled 
penalized likelihood estimator of $\bmu$, with a group lasso penalty, converges at the
rate $\sigma^2\big(\frac{s^2p}{n^2}\vee \frac{p}{n}\big)$, which coincides with the one
obtained\footnote{To be precise, \citep{Gao2015} establish only a lower bound for
the element-wise median, but a matching upper bound can be proved as well.} by \citep{Gao2015}.
In the rest of this section, we will propose an estimator which improves on this rate. 
To this end, we start with obtaining a simplified expression for the group lasso
estimator $\hat\bTheta$.

First, using the fact that 
$\bfPi\boldsymbol{t}^j = \boldsymbol{t}^j - (\bfI_n-\bfPi)\boldsymbol{t}^j$, we get 
$\|\bfPi(\bY^j-\boldsymbol{t}^j) \|_2^2 = \|\bfPi\bY^j-\boldsymbol{t}^j \|_2^2 -
(\nicefrac1n) \big(\mathbf 1_n^\top\boldsymbol{t}^j\big)^2$, so that
\begin{equation}
	\hat{\bfT} \in \arg\min \Big\{ \sum_{i=1}^n \| (\bfY\bfPi)_i-\boldsymbol{t}_i \|_2^2 - \frac1n \Big\| \sum_{i=1}^n \boldsymbol{t}_i \Big\|_2^2 + \sum_{i=1}^n \l_i \|\boldsymbol{t}_i\|_2 \Big\}.
\end{equation}
Recall that $\bL_n(\bTheta) = (\nicefrac1n) \bL(\bTheta)$. The first-order necessary 
conditions imply that, for every $i$ such that $\hat{\bt}_i\neq\mathbf 0_p$,
\begin{equation}
	-2\big( (\bfY\bfPi)_i-\hat{\bt}_i \big) - 2 \bL_n(\hat{\bTheta}) + \frac{\l_i \hat{\bt}_i}{\|\hat{\bt}_i\|_2} =\mathbf  0_p.
\end{equation}
Furthermore, $\hat\bt_i=\mathbf 0_p$ if and only if
$\big\|2 (\bfY\bfPi)_i+ 2\bL_n(\hat{\bTheta}) \big\|_2\le \l_i$. 
We infer that
\begin{equation}
	\hat{\bt}_i = \frac{(\bfY\bfPi)_i+\bL_n(\hat{\bTheta}) }{\big\|(\bfY\bfPi)_i+
	\bL_n(\hat{\bTheta}) \big\|_2}~\Big( \big\|(\bfY\bfPi)_i+\bL_n(\hat\bTheta) \big\|_2 - \frac{\l_i}2 \Big)_+
\end{equation}
for every $i$. Finally, denoting $\bZ_i=(\bfY\bfPi)_i+\bL_n(\hat{\bTheta})$,  we get
\begin{equation}\label{equation_grouplasso}
	\hat{\bt}_i = \bZ_i \Big( 1-\frac{\l_i}{2\|\bZ_i \|_2} \Big)_+.
\end{equation}
This formula shows the clear analogy between the group lasso estimator $\hat\bTheta$ 
and the soft thresholding estimator studied in the previous section. This analogy
suggests to choose the tuning parameters in a data driven way; namely, it is tempting
to set 
\begin{align} \label{Z:0}
\l_i = \frac{2\gamma\sigma\|\bZ_i\|_2}{(\|\bZ_i\|_2^2-\sigma^2p)_+^{1/2}}
\qquad\Longrightarrow
\qquad
\hat{\bt}_i = \bZ_i \Bigg( 1-\frac{\gamma\sigma}{(\|\bZ_i\|_2^2-\sigma^2p)_+^{1/2}} \Bigg)_+.
\end{align}
Unfortunately, such a choice is impossible to realize since this $\l_i$ depends on the 
solution $\hat\bTheta$ of the optimization problem, which in turn is defined through $\l_i$. 
To circumvent this problem, we suggest to use an iterative algorithm that starts from
an initial estimator $\hat\bL_n$ of $\bL_n(\bTheta)$, defines the vectors 
$\bZ_i = (\bfY\bfPi)_i+\hat\bL_n$ and then updates $\hat\bL_n$ by the formula
$\hat\bL_n = \bL_n(\hat\bTheta)$, where the columns of the matrix $\hat\bTheta$ are 
defined by the second equality in \eqref{Z:0}. This algorithm, called iterative
soft thresholding, is described in \Cref{algo:3}. 
\begin{algorithm}[hbt]
\setstretch{1.2}
\SetAlgoLined
\SetKwInOut{Input}{input}\SetKwInOut{Output}{output}\SetKwInOut{Parameter}{parameters}
\Input{matrix $\bfY$, noise variance $\s$, number of outliers $s$.}
\Parameter{number of iterations $N$, confidence level $\d$.}
\Output{vectors $\hat\bL_n{\!\!\!}^{\rm IST}$ and $\hat\bmu^{\rm IST}$.}
initialization\; 
\qquad $\hat\bTheta \gets $ solution of \eqref{definition_groupestimator} with 
$\lambda^2 = 32\sigma^2(p+8\log(n/\delta))$\;
\qquad $\epsilon\gets  \sqrt{288}\,\displaystyle\frac{s\lambda}{n\sigma}$\;
\For{$k=1,\ldots,N$}{
Set $\hat\bL_n \gets \bL_n(\hat\bTheta)$\;
Set $\gamma^2\gets 8\epsilon^2+4\sqrt{4\epsilon^4+p\epsilon^2}$\;
\For{$i=1,\ldots,n$}{
Set $\bZ_i \gets (\bfY\bfPi)_i+\hat\bL_n$\;
Set 
$\displaystyle
\btheta_i \gets \bZ_i\bigg(1-\frac{\sigma\gamma}{(\|\bZ_i\|_2^2-
\frac{n-1}{n}\sigma^2 p)^{1/2}_+}\bigg)_+.
$
}
{
update\;
\qquad $\hat\bTheta \gets [\hat\btheta_1,\ldots,\hat\btheta_n]$.\;
\qquad $\epsilon\gets \nicefrac{4}{n}(\,s\gamma+ s + \sqrt{sp} + 
\{2s\log(4/\delta)\}^{1/2})$\;
}
}
\Return{$\hat\bL{}^{\rm IST}_n\gets \bL_n(\hat\bTheta)$ and 
$\hat\bmu{}^{\rm IST}=\bL_n(\bfY)-\hat\bL{}^{\rm IST}_n$.} 
\caption{Iterative Soft Thresholding}
\label{algo:3}
\end{algorithm}

Prior to stating the theorem that describes the statistical complexity of this estimator,
we present a result that explains why such an iterative algorithm succeeds in improving
the convergence rate.

\begin{proposition}\label{theoreme_bornesuperieure2}
	Let us set $\bZ_i = (\bfY\bfPi)_i+\tilde\bL_n$, where $\tilde\bL_n$ is a
	preliminary estimator of $\bL_n(\bTheta)$. Let $\d\in(0,1)$ be a tolerance level.
	Consider the estimator of $\bfT$ defined by concatenating the vectors
	\begin{align}\label{Z0}
	\hat\bt_i = \bZ_i \bigg(1-\frac{\sigma\gamma}{(\|\bZ_i\|_2^2-\frac{n-1}{n}
	\sigma^2 p)_+^{1/2}}\bigg)_+,
	\end{align}
	where $\gamma^2 > 4\log(4n/\delta)+4 \{p\log(4n/\delta)\}^{1/2}$ is a tuning parameter.
	Define the event
	\begin{align}
	\Omega_1 &= \Bigg\{\|\tilde\bL_n-\bL_n(\bTheta)\|_2	< 
	\frac{\sigma\gamma^2}{4\sqrt{p+\gamma^2}}\Bigg\}.
	\end{align}
	There is an event $\bar\O$ (the same for all estimators $\tilde\bL_n$) of probability 
	at least $1-\d$, such that on $\O_1\cap\bar\O$, we have
	\begin{align}
		\|\bL(\hat{\bTheta}-\bTheta) \|_2
		\le 4\sigma\Big(\,s\gamma+ s + \sqrt{sp} +  \{2s\log(4/\delta)\}^{1/2}\Big).
			\label{Z6}
	\end{align}
\end{proposition}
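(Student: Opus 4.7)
My plan is to expand $\bZ_i = \bt_i + \bs\delta + \bs\eta_i$, where $\bs\delta := \tilde\bL_n - \bL_n(\bTheta)$ is the preliminary error controlled on $\Omega_1$ and $\bs\eta_i := \sigma(\bXi\bfPi)_i = \sigma(\bxi_i - \bar\bxi)$ is the centered noise, and to rewrite the estimator as $\hat\bt_i = \alpha_i\bZ_i$ where $\alpha_i := (1-\tau_i)_+$ with $\tau_i = \sigma\gamma/(\|\bZ_i\|_2^2 - c)^{1/2}_+$ and $c := (n-1)\sigma^2 p/n$. The noise-only event $\bar\Omega$ (hence common to all preliminary $\tilde\bL_n$) will be the intersection of three standard Gaussian concentration bounds on $\bXi$, each absorbing at most $\delta/3$: (i) uniform two-sided $\chi^2_p$ bounds for $\|\bs\eta_i\|_2^2$ with deviation $t = \log(4n/\delta)$, which by the hypothesis on $\gamma$ give $|\|\bs\eta_i\|_2^2 - \sigma^2(1-1/n)p| \le \sigma^2\gamma^2/2$; (ii) uniform Gaussian tails $|\bt_i^\top \bs\eta_i| \le \sigma\|\bt_i\|_2\sqrt{2t}$ (valid because $\bt_i$ is a deterministic parameter); and (iii) an operator-norm bound on the $p\times s$ submatrix $\mathbf E_S$ with columns $\bs\eta_i$, $i\in S$, of the form $\|\mathbf E_S\|_{\mathrm{op}} \le \sigma(\sqrt p + \sqrt s + \sqrt{2\log(4/\delta)})$.

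The first substantive step is to show $\hat\bt_i = \mathbf 0_p$ for every inlier $i\in S^c$ on $\Omega_1 \cap \bar\Omega$. For such $i$, $\bZ_i = \bs\delta + \bs\eta_i$, so $\|\bZ_i\|_2 \le \|\bs\delta\|_2 + \|\bs\eta_i\|_2$; plugging in the bound from $\Omega_1$ and from~(i), a short algebraic manipulation using $\sqrt{1 - \gamma^2/(2(p+\gamma^2))} \le 1 - \gamma^2/(4(p+\gamma^2))$ gives $(\|\bs\delta\|_2 + \|\bs\eta_i\|_2)^2 \le c + \sigma^2\gamma^2$, which is exactly the threshold below which the soft-thresholding factor vanishes. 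Consequently $\bL(\hat\bTheta - \bTheta) = \sum_{i\in S}(\hat\bt_i - \bt_i)$, and using $\alpha_i + \min(1,\tau_i) = 1$ I decompose each summand as $\hat\bt_i - \bt_i = \alpha_i(\bs\delta + \bs\eta_i) - \min(1,\tau_i)\bt_i$, splitting the sum into a ``Gaussian'' and a ``bias'' contribution.

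The Gaussian part $\|\sum_{i\in S}\alpha_i(\bs\delta + \bs\eta_i)\|_2$ is at most $s\|\bs\delta\|_2 + \|\mathbf E_S\alpha_S\|_2 \le s\sigma\gamma/4 + \sqrt s\,\|\mathbf E_S\|_{\mathrm{op}}$, which by (iii) produces the $\sigma(\sqrt{sp} + s + \sqrt{2s\log(4/\delta)})$ terms of the stated bound. The bias part $\|\sum_{i\in S}\min(1,\tau_i)\bt_i\|_2 \le \sum_{i\in S}\min(1,\tau_i)\|\bt_i\|_2$ is the main obstacle: I would argue uniformly that $\min(1,\tau_i)\|\bt_i\|_2 = O(\sigma\gamma)$ by splitting outliers into a ``small'' regime $\|\bt_i\|_2 \le K\sigma\gamma$ (where the bound is immediate since $\min(1,\tau_i) \le 1$) and a ``large'' regime $\|\bt_i\|_2 > K\sigma\gamma$. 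In the large regime, the lower half of~(i) yields $\|\bs\delta+\bs\eta_i\|_2^2 \ge c - \sigma^2\gamma^2$, and (ii) together with $\Omega_1$ yields $|\bt_i^\top(\bs\delta+\bs\eta_i)| \le (1/\sqrt 2 + 1/4)\|\bt_i\|_2\sigma\gamma$; combining these in the expansion $\|\bZ_i\|_2^2 - c = \|\bt_i\|_2^2 + 2\bt_i^\top(\bs\delta+\bs\eta_i) + \|\bs\delta+\bs\eta_i\|_2^2 - c$ and choosing $K$ large enough forces $\|\bZ_i\|_2^2 - c \gtrsim \|\bt_i\|_2^2$, hence $\tau_i\|\bt_i\|_2 \lesssim \sigma\gamma$. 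Summing over the $\le s$ outliers yields the $O(s\sigma\gamma)$ bias; combining with the Gaussian bound gives the stated inequality. The most delicate part of the proof will be tracking absolute constants --- both in the choice of $K$ and in how $\delta$ is split across (i)--(iii) --- so that the final prefactor is the stated $4$ rather than a larger number.
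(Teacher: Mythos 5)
Your proposal is correct, and it reaches the stated bound by a route that differs from the paper's in the key step. The inlier-nulling part is essentially identical to the paper's Lemma~\ref{lemZ1} (triangle inequality, two-sided $\chi^2$ concentration for $\|\bxi_i-\bar\bxi\|_2^2$, and the bound on $\|\tilde\bL_n-\bL_n(\bTheta)\|_2$ from $\Omega_1$, with the same elementary inequality $\sqrt{q+\gamma^2}-\sqrt{q+\gamma^2/2}\ge \gamma^2/(4\sqrt{p+\gamma^2})$). Where you diverge is the control of the on-support error: the paper writes $\bZ_iw_i-\bt_i=(\bt_i+\tilde\bDelta_n)(w_i-1)+(\tilde\bDelta_n+\sigma\bar\bxi_iw_i)$ and bounds $\|\bt_i+\tilde\bDelta_n\|_2|w_i-1|$ through the algebraic inequality of Lemma~\ref{lemZ2}, which produces per-outlier terms $2\sigma|\eta_i|+\{2\sigma|\bar\bxi_i^\top\tilde\bDelta_n|\}^{1/2}+1.96\,\sigma\gamma$, the cross term then being aggregated by a H\"older step with $s^{3/4}\|\bXi_S\|^{1/2}\|\tilde\bDelta_n\|_2^{1/2}$ and a separate event for $\sum_{i\in S}|\eta_i|$; you instead isolate the shrinkage bias $\min(1,\tau_i)\|\bt_i\|_2$ and show it is $O(\sigma\gamma)$ per outlier by the small/large dichotomy on $\|\bt_i\|_2$, using a union-bound event for $|\bt_i^\top\bs\eta_i|$, while the noise contribution is absorbed through $\sqrt{s}\,\|\mathbf{E}_S\|_{\mathrm{op}}$ in both proofs. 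Your route is arguably more transparent (no $s^{3/4}$ interpolation, no opaque min-inequality), at the price of a union bound over the outliers — harmless here since the hypothesis on $\gamma$ already scales as $\log(4n/\delta)$ — and of heavier bookkeeping to land exactly on the prefactor $4$: as written, your $\delta/3$ split with deviation $t=\log(4n/\delta)$ in both (i) and (ii) overspends the budget slightly and needs rebalancing (e.g.\ two-sided $\chi^2$ at level $\delta/2$, Gaussian tails at $\sqrt{2\log(16n/\delta)}\le\gamma$, operator norm at $\delta/8$), after which a choice $K\approx 3$ gives a per-outlier bias below $3.5\,\sigma\gamma$ and the total fits under the constant $4$. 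One technical point to state explicitly: your matrix $\mathbf{E}_S$ has the \emph{centered} columns $\sigma(\bxi_i-\bar\bxi)$, so the operator-norm bound for i.i.d.\ standard Gaussian matrices does not apply verbatim; write $\mathbf{E}_S=\sigma(\bXi_S-\bar\bxi\mathbf 1_s^\top)$ and add the rank-one correction $\sigma\sqrt{s}\,\|\bar\bxi\|_2\lesssim\sigma\sqrt{sp/n}$, which is lower order (the paper's own proof glosses over the same issue).
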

It follows from this theorem that at each iteration of the algorithm 
we improve the precision of estimation of $\bL_n(\bTheta)$. Indeed, if $\epsilon_k$ 
is an upper bound on the error $\|\hat\bL_n\!{}^{(k)}-\bL_n(\bTheta)\|_2/\s$ at the $k$th 
iteration, then we get from the last theorem that
\begin{align}\label{eq:recursion}
\epsilon_{k+1} \le \frac{8s}{n}
\big(2\epsilon_k^2+(4\epsilon_k^4+p\epsilon_k^2)^{1/2}\big)^{1/2} +a,
\end{align}
with $a= (4/n)(s+\sqrt{sp}+\{2s\log(4/\delta)\}^{1/2})$.

\begin{lemma}\label{lemma:recursion}
If $\epsilon_0^2\le p$, $n\ge 33s$ and $a\le 0.5\sqrt{p}$, then
\begin{align}\label{recursion:claim}
\epsilon_k \le  \Big\{p^{1/2}\Big(\frac{33^2 s^2}{n^2}\Big)^{1-(\nicefrac12)^{k}}\Big\}
\vee 2a.
\end{align}
\end{lemma}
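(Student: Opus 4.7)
The plan is to prove \eqref{recursion:claim} by induction on $k$, after first reducing the one-step recursion \eqref{eq:recursion} to a cleaner form. Write $\rho = s/n\le 1/33$. If $\epsilon_k^2\le p$, then factoring $\epsilon_k$ out of the square root gives
\begin{equation*}
2\epsilon_k^2 + (4\epsilon_k^4+p\epsilon_k^2)^{1/2} = \epsilon_k\bigl(2\epsilon_k+\sqrt{4\epsilon_k^2+p}\bigr) \le (2+\sqrt{5})\sqrt{p}\,\epsilon_k,
\end{equation*}
and since a direct numerical check shows $8\sqrt{2+\sqrt{5}} < 33/2$, the recursion \eqref{eq:recursion} simplifies, under this sign constraint, to
\begin{equation*}
\epsilon_{k+1} \le \tfrac{33}{2}\,\rho\,p^{1/4}\sqrt{\epsilon_k}+a. \qquad (\dagger)
\end{equation*}
A preliminary one-line induction then shows that $\epsilon_k\le \sqrt{p}$ for every $k\ge 0$, which keeps $(\dagger)$ applicable at every step: if $\epsilon_k\le\sqrt{p}$, then $(\dagger)$ gives $\epsilon_{k+1}\le \tfrac{33}{2}\rho\sqrt{p}+a\le \tfrac12\sqrt{p}+\tfrac12\sqrt{p}=\sqrt{p}$, where I used $\rho\le 1/33$ and $a\le 0.5\sqrt{p}$.

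To establish \eqref{recursion:claim}, introduce $b_k=(33^2\rho^2)^{1-(1/2)^k}$ and $M_k = \sqrt{p}\,b_k\vee 2a$, so that the target becomes $\epsilon_k\le M_k$. The algebraic identity that drives the induction is
\begin{equation*}
b_{k+1} = 33\rho\cdot b_k^{1/2},
\end{equation*}
which is transparent after rewriting $b_k = (33\rho)^{2(1-(1/2)^k)}$. The base case $\epsilon_0\le M_0 = \sqrt{p}\vee 2a$ is immediate from the hypothesis $\epsilon_0^2\le p$. For the inductive step, apply $(\dagger)$ to $\epsilon_k\le M_k$ and split according to which side of the maximum defining $M_k$ is active.

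If $M_k = \sqrt{p}\,b_k$, the identity gives $\tfrac{33}{2}\rho\,p^{1/4}\sqrt{M_k} = \tfrac{33}{2}\rho\,p^{1/4}\cdot p^{1/4}b_k^{1/2} = \tfrac12\sqrt{p}\,b_{k+1}$, so $\epsilon_{k+1}\le \tfrac12\sqrt{p}\,b_{k+1}+a$; this is bounded by $\sqrt{p}\,b_{k+1}$ when $a\le \tfrac12\sqrt{p}\,b_{k+1}$ and by $2a$ otherwise, hence by $M_{k+1}$ in either sub-case. The more delicate situation is $M_k = 2a > \sqrt{p}\,b_k$: because the sequence $b_k$ decreases monotonically to $33^2\rho^2$, the condition $2a>\sqrt{p}\,b_k$ forces $a\ge \tfrac{33^2}{2}\rho^2\sqrt{p}$; and the inequality $\tfrac{33}{2}\rho\,p^{1/4}\sqrt{2a}\le a$ that I actually need, upon squaring, turns out to be equivalent to precisely this same lower bound on $a$. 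Combined with $\sqrt{2a}\le p^{1/4}$ (from $a\le 0.5\sqrt{p}$, which keeps us in the regime where $(\dagger)$ is valid), this yields $\epsilon_{k+1}\le a+a = 2a = M_{k+1}$, closing the induction. The main obstacle, and indeed the only step requiring any thought, is this last case: the tightness of the constants in the hypotheses $\rho\le 1/33$ and $a\le 0.5\sqrt{p}$ is exactly what makes the ``$2a$ dominates'' regime self-sustaining.
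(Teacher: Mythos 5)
Your proof is correct and takes essentially the same route as the paper's: an induction on $k$ that maintains $\epsilon_k\le\sqrt{p}$, uses $8\sqrt{2+\sqrt{5}}\le 16.5$ to reduce the recursion \eqref{eq:recursion} to $\epsilon_{k+1}\le \frac{33}{2}\frac{s}{n}\,p^{1/4}\sqrt{\epsilon_k}+a$, and then exploits the identity $b_{k+1}=33\,(s/n)\,b_k^{1/2}$ to close the induction. The only difference is presentational: you keep the additive $+a$ and spell out the case analysis (in particular the regime where $2a$ dominates), whereas the paper absorbs $a$ via $x+a\le 2x\vee 2a$ and compresses the final step into ``Applying inequality \eqref{recursion:claim}, we get the claim.''
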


Combining all these results, we arrive at the following risk bound for the
iterative soft thresholding estimator.

\begin{theorem}
Let $\d\in(0,1)$, $N\in\NN$ and let $\hat\bL{}^{\rm IST}_n(N)$ 
be the iterative soft thresholding estimator obtaind after $N$ iterations. 
Assume that $p\ge \log(8/\delta)$ and $N\ge \log\log p$. There are some universal 
strictly positive constants $c_1,c_2,c_3$ such that if the condition
\begin{align}
s\le c_1 n
\end{align}
is satisfied then, with probability at least $1-2\d$, the following inequalities hold
true:
\begin{align}
\|\hat\bL{}^{\rm IST}_n(N)-\bL_n(\bTheta)\|_2
	&\le c_2\sigma\bigg\{p^{1/2}\Big(\frac{s^2}{n^2}\Big)^{1-2^{-N}}+
	\frac{s+\sqrt{sp}}{n}\bigg\},\\
\|\hat\bmu{}^{\rm IST}-\bmu\|_2
	&\le c_3\sigma\bigg\{p^{1/2}\Big(\frac{s^2}{n^2}\Big)^{1-2^{-N}}+
	\frac{s}{n}+\Big(\frac{p}{n}\Big)^{1/2}\bigg\}.
\end{align}
\end{theorem}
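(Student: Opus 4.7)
\medskip\noindent\textbf{Proof plan.}
The plan is to assemble three ingredients already prepared in the text: the initial bound from \Cref{theoreme_bornesuperieure1}, the one-step deterministic bound from \Cref{theoreme_bornesuperieure2}, and the recursion control from \Cref{lemma:recursion}. First, I would work on the single high-probability event $\bar\O$ of \Cref{theoreme_bornesuperieure2} (having probability at least $1-\d$), intersected with the event of probability $1-\d$ on which the initial group-lasso estimator of \Cref{theoreme_bornesuperieure1} satisfies $\|\bL_n(\hat\bTheta^{(0)})-\bL_n(\bTheta)\|_2^2\le 288 s^2\lambda^2/n^2$ with $\lambda^2 = 32\sigma^2(p+8\log(n/\delta))$. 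Because the event $\bar\O$ from the proposition is stated to be the same for all preliminary estimators $\tilde\bL_n$, no union bound over the $N$ iterations is required, which is the key conceptual point allowing iteration. The total probability is thus at least $1-2\d$, matching the conclusion.

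Next, I would translate the algorithm's updates into the recursion on $\epsilon_k = \|\hat\bL_n^{(k)}-\bL_n(\bTheta)\|_2/\sigma$. For this, I need to verify that at each step the condition $\|\tilde\bL_n-\bL_n(\bTheta)\|_2 < \sigma\gamma^2/(4\sqrt{p+\gamma^2})$ defining the event $\O_1$ is implied by $\|\tilde\bL_n-\bL_n(\bTheta)\|_2 \le \sigma\epsilon_{k-1}$, given the algorithmic choice $\gamma^2 = 8\epsilon_{k-1}^2+4\sqrt{4\epsilon_{k-1}^4+p\epsilon_{k-1}^2}$. A short calculation shows that this choice is precisely tuned so that $\gamma^4\ge 16\epsilon_{k-1}^2(p+\gamma^2)$, so the hypothesis of \Cref{theoreme_bornesuperieure2} holds. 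Plugging $\gamma$ into \eqref{Z6} and dividing by $n$ then yields the recursion \eqref{eq:recursion} with $a = (4/n)(s+\sqrt{sp}+\{2s\log(4/\delta)\}^{1/2})$.

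Third, I would check the preconditions of \Cref{lemma:recursion}: the initialization $\epsilon_0^2 \le p$ follows from the bound of \Cref{theoreme_bornesuperieure1} together with the assumption $s\le c_1 n$ for a sufficiently small $c_1$ and the hypothesis $p\ge\log(8/\delta)$ (which absorbs the $\log(n/\delta)$ term in $\lambda^2$ into a constant multiple of $p$ after possibly enlarging $c_1$); similarly $n\ge 33 s$ and $a\le 0.5\sqrt{p}$ are consequences of $s\le c_1 n$ and $p\ge\log(8/\delta)$. Applying the lemma gives $\epsilon_N\le p^{1/2}(33^2s^2/n^2)^{1-2^{-N}}\vee 2a$. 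Since $N\ge\log\log p$, the constant $(33^2)^{1-2^{-N}}$ is absorbed in the universal constant $c_2$, and rewriting $2a$ in terms of $s/n$ and $\sqrt{sp}/n$ (and using $p\ge\log(8/\delta)$ to absorb the $\log(4/\delta)$ term into $\sqrt{sp}/n$) yields the first inequality of the theorem.

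Finally, the bound on $\hat\bmu^{\rm IST}$ follows from the identity $\hat\bmu^{\rm IST}-\bmu = \bL_n(\bTheta)-\hat\bL_n^{\rm IST}+\sigma\bL_n(\bXi)$, the triangle inequality, and a standard concentration bound for the Gaussian vector $\bL_n(\bXi)\sim\mathcal N_p(\mathbf 0,\bfI_p/n)$, which gives $\sigma\|\bL_n(\bXi)\|_2 \lesssim \sigma(\sqrt{p/n}+\sqrt{\log(1/\delta)/n})$ on an event of probability $1-\d$; the term $\sqrt{\log(1/\delta)/n}$ is dominated by $\sqrt{p/n}$ under $p\ge\log(8/\delta)$. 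The term $\sqrt{sp}/n$ from the bound on $\hat\bL_n^{\rm IST}$ is dominated by $\sqrt{p/n}$ under the same scaling. The main obstacle is the bookkeeping in Step~2: tracking the dependence of $\gamma_k$ on $\epsilon_{k-1}$ and checking that the condition defining $\O_1$ propagates through the iterations without requiring a union bound — this is made possible by the universal event $\bar\O$ of \Cref{theoreme_bornesuperieure2}, which is why the proposition is stated that way.
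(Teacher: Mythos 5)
Your proposal takes essentially the same route as the paper, which offers no separate proof of this theorem but presents it as the direct combination of \Cref{theoreme_bornesuperieure1} (initialization of $\epsilon_0$ via the group-lasso step), \Cref{theoreme_bornesuperieure2} (one-step contraction on the universal event $\bar\O$, with the algorithmic choice of $\gamma$ satisfying exactly $\gamma^4=16\epsilon_{k-1}^2(p+\gamma^2)$ so that $\O_1$ propagates without a union bound, yielding the recursion \eqref{eq:recursion}), and \Cref{lemma:recursion} to control the recursion, followed by the decomposition $\hat\bmu{}^{\rm IST}-\bmu=\bL_n(\bTheta)-\hat\bL{}^{\rm IST}_n+\sigma\bL_n(\bXi)$ and Gaussian concentration for the last term. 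Your write-up is correct at the paper's own level of rigor and fills in the details the paper leaves implicit (the tuning identity, the verification of the preconditions of \Cref{lemma:recursion}, and the absorption of lower-order terms).
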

This implies, in particular, that if $p\le C (n/s)^{2-\nu}$
for some $\nu\in(0,1/2)$ close to zero, then performing $N = \log_2(1/\nu)$ 
iterations of the IST algorithm we will recover the mean $\bmu$ of the inliers 
at an optimal rate $(s/n)^2\vee (p/n)$.

To complete this section, let us briefly note that one can use the Lepski method
as described in \Cref{section_mmx} for getting an estimator of $\bmu$ that does
not require the knowledge of $s$. This will only increase the error by a factor
at worst equal to $3$.

\begin{remark}
From an intuitive point of view, the algorithm described in \Cref{algo:3} can be
seen as an iterative approximation of the estimator
\begin{align}\label{Hub}
\hat\bmu^*\in\text{arg}\min_{\bmu\in\RR^p} \sum_{i=1}^n \rho_{\rm H}
\Bigg(\frac{(\|\bY_i-\bmu\|_2^2-\sigma^2p)^{1/2}_+}{\sigma\gamma}\Bigg),
\end{align}  
for an appropriately chosen tuning parameter $\g>0$, where $\rho_{\rm H}$
is the Huber function. Unfortunately, the cost function in the above  
minimization problem is not convex with respect to the parameter $\bmu$. 
This implies that general purpose guarantees available for approximating
solutions of convex programs are not applicable to \eqref{Hub}. To the best
of our knowledge, there is no efficient algorithm that provably approximates
$\hat\bmu^*$. 
\end{remark}

\section{Conclusion and perspectives}\label{section_conc}

In this work, we have studied two problems: the problem of estimating
a multidimensional linear functional and the one of estimating the mean 
of $p$-variate random vectors when the data is corrupted by outliers. 
In the first problem, we have obtained upper and lower bounds on the minimax
risk that match in most situations. More importantly, in both problems, we 
have studied computationally tractable estimators and have obtained the
best known rates of convergence. A surprising outcome of our work is that
exploiting the group structure of the sparsity is far more important in
the problem of linear functional estimation rather than in the problem
of the whole signal. We have also designed a new robust estimator of the
mean that iteratively performs group soft thresholding on a suitable
transformation of the data. 

There several questions related to the present work that remain open. 
First, it would be interesting to close the gap in the minimax rate of
estimation of a linear functional when $p+\sqrt{n}=O(s)$. Second, in both
problems studied in this work, a challenging question for future research
is to establish lower bounds on the minimax risk over computationally
tractable estimators. For the problem of robust estimation, one may use 
a suitable version of the median of means 
\citep{Oliveira,minsker2015,devroye2016,LL17}. We are not aware of any 
result establishing upper bounds on the risk of these methods in the 
models considered in the present work yielding a better rate than 
those presented herein.

%There is also a vast recent literature on computationally 
%tractable robust estimation in computer science, see for instance 
%\citep{LRV,DiakonikolasKKL16}. These papers typically prove that 
%some polynomial time algorithms succeed in estimating the mean of the
%Gaussian distribution with a quadratic risk scaling as $(s/n)^2$, up
%to logarithmic factors, provided that the proportion of outliers satisfies, 
%for some $C,\gamma>0$, $(s/n)^2\ge C(p/n)\log^\gamma (pn/s)$. Comparing 
%these results to those established in the present work, we can mention 
%the following main differences: 
%\begin{itemize}
%\item[(a)] The aforementioned results do not give information about the 
%statistical accuracy of the estimators in the regime of low $s$, namely 
%when the term $(s/n)^2$ is dominated by the parametric rate $(p/n)$. As 
%opposed to this, our result tells us that if $(s/n)^2\le C p^{-0.5-\gamma}$, 
%with some $C,\gamma>0$, then the computationally tractable iterative soft 
%thresholding estimator achieves the optimal parametric rate $p/n$.
%
%\item[(b)] Beforementioned papers need the observations to be benerated from
%the Huber model; this is, in particular, used for data splitting and using 
%independent subsets of data for different steps of the algorithms. No such
%assumption is required in our results.
 %
%\item[(c)] The computationally tractable robust estimator proposed in the 
%present work is much simpler than those analyzed in \citep{LRV,DiakonikolasKKL16}. 
%This is also true for the proof of the result, which is shorter than
%those of the aforecited papers. 
%
%\end{itemize}

\section{Proofs of the main results}\label{section_proofs1}

This section contains the proofs of the main theorems stated in the previous sections.

\subsection{Proofs of the theorems of \Cref{section_linear} }
\begin{proof}[Proof of \Cref{theorem_GSS}]
Using the triangle inequality several times, we get
\begin{align}
\|\hat\bL{}^{\rm GSS}-\bL(\bfT)\|_2
    &\le \|\bL(\bfY_{\hat S})-\bL(\bfY_{S})\|_2+ \|\bL(\bfY_{S})- \bL(\bfT_S)\|_2\\
    &\le \|\bL(\bfY_{\hat S\setminus S})\|_2+\|\bL(\bfY_{S\setminus \hat S})\|_2
			+ \sigma\|\bL(\bxi_{S})\|_2 \phantom{\Big(}\\
    &\le \sigma\|\bL(\bxi_{\hat S\setminus S})\|_2+\|\bL(\bfY_{S\setminus \hat S})\|_2
			+ \sigma\|\bL(\bxi_{S})\|_2.
    \label{line:3}
\end{align}
To upper bound the three terms of the right hand side, we introduce the event
\begin{equation}
\Omega_\l = \Big\{\|\bL(\bxi_J) \|_2^2\le 2|J|(p+\l |J|)\text{ for all }
J\subseteq [n]\Big\}.
\end{equation}
We will show that the following three claims are true for the tuning parameter $\l$ chosen as in the statement of the theorem.
\begin{description}\itemsep=5pt
\item[Claim 1:] On the event $\Omega_\l$, at least half of the elements of each $\hat J_\ell$
belong to the true sparsity pattern $S$. Thus $|\hat S|\le 2s$.
\item[Claim 2:] $\|\bL(\bfY_{S\setminus\hat S})\|_2^2\le 12\sigma^2 s(p+\l s)$.
\item[Claim 3:] The probability of $\Omega_\l$ is close to $1$.
\end{description}
Let us first show that these claims imply the claim of the theorem. Indeed,  the second term of the right hand side of \eqref{line:3} is bounded by $\sigma\sqrt{12s(p+\l s)}$ in view of Claim 2. The third term is bounded by $\sigma\sqrt{2s(p+\l s)}$ on the event $\O_\l$. Concerning the first term, we know that on $\O_\l$ it is bounded by $\sigma\sqrt{2|\hat S|(p+\l |\hat S|)}$. In view of Claim 1, $|\hat S|\le 2s$. All these inequalities imply that
\begin{align}
\|\hat\bL{}^{\rm GSS}-\bL(\bfT)\|_2^2
    &\le 60\sigma^2{s(p+\l s)}
\end{align}
on the event $\O_\l$. This is exactly the claim of the theorem.

Let us prove now Claims 1-3. To prove the first claim, let us assume that there is a set $J$ among $\hat J_1,\ldots,\hat J_\ell$ and a subset $J_0\subset J$ of cardinality\footnote{To avoid uninteresting and irrelevant technicalities, we assume here that $|J|$ is even.} $|J|/2$ such that $J_0\subset S^c$. This readily implies that $\|\bL(\bfY_{J})\|_2^2\ge 12\sigma^2 |J|(p+\l |J|)$ and $\|\bL(\bfY_{J\setminus J_0})\|_2^2< 6\sigma^2 |J|(p+\l |J|/2)$. Using the additivity of $\bL$ and the triangle inequality, we get
\begin{align}
\|\bL(\bfY_{J})\|_2^2
    & \le (\|\bL(\bfY_{J\setminus J_0})\|_2+ \sigma\|\bL(\bxi_{J_0})\|_2)^2 \phantom{\frac32} \\
    & \le \frac32\|\bL(\bfY_{J\setminus J_0})\|_2^2+ 3\sigma^2\|\bL(\bxi_{J_0})\|_2^2\\
    & < 9\sigma^2 |J|(p+\l |J|/2)+ 3\sigma^2 |J|(p+\l|J|/2) \phantom{\frac32}\\
    & < 12\sigma^2 |J|(p+\l|J|). \phantom{\frac32}
\end{align}
This is in contradiction with the fact that $J$ is one of the sets $\hat J_1,\ldots,\hat J_L$. So, Claim 1 is proved.

The proof of Claim 2 is simpler. By construction, the set $S\setminus \hat S$ is a subset of $I_L$, where $L$ is the number of steps performed by the algorithm. Since the algorithm terminated after the $L$th step, this means that $\calI$ was empty, which implies  that $\|\bL(\bfY_{S\setminus\hat S})\|_2^2\le 12\sigma^2 |S\setminus\hat S|(p+\l|S\setminus\hat S|) \le 12\sigma^2 s(p+\l s)$.

It remains to prove Claim 3. This can be done using the union bound and tail bounds for $\chi^2_p$-distributed random variables. Indeed, we have
\begin{align}
\prob_\bfT\big(\Omega_\lambda^c\big)
    &\le \sum_{k=1}^n \prob_\bfT\big(\exists J\subset [n]\text{ s.t. } |J|=k\text{ and }
            \|\bL(\bxi_J)\|_2^2> 2|J|(p+\l|J|)\big)\\
    &\le \sum_{k=1}^n {{n}\choose{k}} \max_{J:|J|=k}
            \prob_\bfT\big(\|\bL(\bxi_J)\|_2^2> 2k(p+\l k)\big)\\
    &\le \sum_{k=1}^n {{n}\choose{k}}
            \prob\big(\eta > 2p + 2 \l k\big)
\end{align}
where $\eta\sim \chi^2_p$. Using the well known bound on the tails of the $\chi^2_p$
distribution, we get
\begin{align}
\prob\big(\Omega_\lambda^c\big)
    &\le \sum_{k=1}^n {{n}\choose{k}} e^{-2\l k/3} = (1+e^{-2\l/3})^n-1.
\end{align}
Therefore, for $\lambda = \nicefrac32\log(\nicefrac{2n}{\delta})$, we obtain that  $\prob_\bfT\big(\Omega_\lambda^c\big)\le \delta$. This completes the proof of Claim~3 and of the theorem.
\end{proof}

\begin{proof}[Proof of \Cref{theorem_HT}]
	Recall that $\bXi =(\bxi_1, \ldots,\bxi_n)$. First, we decompose
	\begin{align}
		\hat{\bL}{}^{\rm GHT}-\bL(\bfT)
		&= \bL(\bY_S-\bTheta) - \bL(\bY_{S\setminus S_\lambda} )+ \bL(\bY_{S_\l\setminus S})\\
		&= \sigma\bL(\bXi_S) - \bL(\bY_{S\setminus S_\lambda} )+ \sigma\bL(\bXi_{S_\l\setminus S})\\
		&= \sigma\bL(\bXi_{S\cap S_\lambda}) - \bL(\bTheta_{S\setminus S_\lambda} )+ \sigma\bL(\bXi_{S_\l\setminus S}).
		\label{decomposition}
	\end{align}
	so that
	\begin{align}\label{ineq:2}
	\big\| \hat{\bL}{}^{\rm GHT}-\bL(\bfT) \big\|_2 &\le \sigma\|\bL(\bXi_{S\cap S_\lambda})\|_2 +
	\|\bL(\bTheta_{S\setminus S_\lambda})\|_2 + \sigma\|\bL(\bXi_{S_\l \setminus S})\|_2.
	\end{align}
	The first term corresponds to the stochastic error of estimating the signal vectors   
	that are correctly identified as nonzero. We can write
	\begin{equation}
	\|\bL(\bXi_{S\cap S_\lambda})\|_2 = \|\bXi_{S}\mathbf 1_{S\cap S_\lambda}\|_2\le
 	\sqrt{s}\|\bXi_{S}\|.
	\end{equation}
	The second-order moment of the spectral norm of the random matrix $\bXi_S$ can be
	evaluated using well-known upper bounds on the spectral norm of matrices with
	independent Gaussian entries, recalled in \Cref{lemma_Wishart2} below, so that
	\begin{equation}\label{esperance_norm}
	\esp_\bfT \big[\|\bL(\bXi_{S\cap S_\lambda})\|_2^2\big] \le 3s^2 + 3sp+12s.
	\end{equation}
 	Set $\eta_i=\btheta_i^\top\bxi_i/\|\btheta_i\|_2$. We can control the second term in \eqref{ineq:2} using the following inequality
    \begin{align}
	\|\bL(\bTheta_{S\setminus S_\lambda})\|_2&\le \sum_{i\in S} \|\bt_i\|_2 \fcar_{\|\bt_i\|^2<\lambda^2	-2\sigma\bt_i^\top \bxi_i-\sigma^2\|\bxi_i\|^2} \\
	&\le s(\lambda^2-\sigma^2p)^{1/2}+2\sigma\sum_{i\in S} |\eta_i| + \sigma\sum_{i\in S} \big| \|\bxi_i\|^2-p\, \big|^{1/2}.
	\end{align}
	This readily yields
	\begin{equation}\label{eval_on_support}
	\esp_\bfT \big[\|\bL(\bTheta_{S\setminus S_\lambda})\|_2^2]^{1/2}
	\le s(\lambda^2-\sigma^2p)^{1/2}+2\sigma s + \sigma s (2p)^{1/4}.
	\end{equation}
	The third term in \eqref{ineq:2}
	corresponds to the Type II error in the problem of support estimation. Denoting
	$t=(\lambda^2-\sigma^2p)/\sigma^2$, and using tail bounds for the chi-squared
	random variables (see \Cref{lemme_esperancechideux} below), we get
	\begin{align}
	\esp_\bfT[\|\bXi_{S_\l \setminus S}\|_2^2]
	    &= \esp_\bfT\bigg[\bigg\| \sum_{i\in S^c} \bxi_i\fcar(\|\bxi_i\|_2\ge \l/\s)\bigg\|_2^2\bigg]\\
	    &= \sum_{i\in S^c}\esp_\bfT[\|\bxi_i\|_2^2\fcar(\|\bxi_i\|_2^2\ge p+t)]\\
	    &\le 	2n \big( pe^{-t^2/32p}\,\fcar_{t< 4p} + t
	    e^{-t/4}\fcar_{t \ge 4p}\big).
	\end{align}
	Using the fact that $t= 4\log(1+n/s^2)\vee\big\{16p\log(1+n^2p/s^4)\big\}^{1/2}$
	we arrive at
	\begin{align}
	\esp_\bfT[\|\bXi_{S_\l \setminus S}\|_2^2]
	    &\le 	\Big(8s^2\log(1+n/s^2)\Big)\vee \Big(2s^2\sqrt{p}
			\wedge 2np\Big).
	\end{align}
	The result follows from the previous upper bounds and the choice of $\l$.
\end{proof}

\begin{proof}[Proof of \Cref{theorem_HT_lowerbound}]
	We define $\bfT$ as the matrix with entries $\e=\sigma p^{-1/4}$ in the first $s$ 
	columns and $0$ elsewhere. Using the inequality 
	$\forall a,b,~(a-b)^2\ge a^2/2-b^2$ and~\eqref{decomposition}, we get
	\begin{align}
	\esp_\bfT \big[\big\| \hat{\bL}{}^{\rm GHT}-\bL(\bfT) \big\|_2^2 \big]
			&\ge \frac12 \esp_\bfT \big[\big\| \sigma\bL(\bXi_{S_\l\setminus S}) -
			\bL(\bTheta_{S\setminus S_\lambda})\big\|_2^2\big] - \sigma^2\esp_\bfT 
			\big[\big\| \bL(\bXi_{S\cap S_\lambda}) \big\|_2^2\big].
	\end{align}
	Moreover, $\bL(\bXi_{S_\l\setminus S})$ being centered and independent of 
	$\bL(\bTheta_{S\setminus S_\lambda})$, we can develop 
	\begin{align}\label{decomposition_preuve_lowerboundHT}
	\esp_\bfT \big[\big\| \sigma\bL(\bXi_{S_\l\setminus S}) -
			\bL(\bTheta_{S\setminus S_\lambda})\big\|_2^2\big] &= 
			\sigma^2\esp_\bfT \big[\big\|  \bL(\bXi_{S_\l\setminus S})\big\|_2^2\big] +
			\esp_\bfT \big[\big\| \bL(\bTheta_{S\setminus S_\lambda})\big\|_2^2\big].
	\end{align}
	First assume that $\lambda^2\ge \sigma^2 p + \sigma^2\sqrt{p}$ and focus on the second term in 
	the right-hand side of the last display. Using Jensen's inequality, we have
	\begin{align}
	\esp_\bfT \big[\big\| \bL(\bTheta_{S\setminus S_\lambda})\big\|_2^2\big]
	&\ge \big\| \esp_\bfT \big[\bL(\bTheta_{S\setminus S_\lambda})\big]\big\|_2^2\\
	&=	p\e^2s^2 \max_{i\in S}\prob_\bfT\big(\|\bY_i\|_2<\l\big)^2\\
	&= s^2\sqrt{p}\max_{i\in S} \prob_\bfT\big(\|\bY_i\|_2<\l\big)^2.
	\end{align}
  On the other hand, since  $\sgn(\bxi_i^\top\btheta_i)$ 
	is a Rademacher random variable independent of $\|\bxi_i\|_2^2$, for every $i\in S$, 
	we have
	\begin{equation}
	\prob_\bfT\big(\|\bY_i\|_2^2<\lambda^2\big) 
		\ge 0.5\prob\big(\sigma^2\|\bxi_1\|_2^2-\sigma^2p<\sigma^2\sqrt{p}-p\e^2\big) 
		= 0.5\prob\big(\|\bxi_1\|_2^2-p<0\big).
	\end{equation}
	This last probability converges to $1/2$, so that it is larger than 
	$1/4$ for all $p$ large enough.
	
	In the other case, $\lambda^2< \sigma^2 p + \sigma^2\sqrt{p}$, we consider the first term:
	\begin{align}
	\esp_\bfT \big[\big\|  \bL(\bXi_{S_\l\setminus S})\big\|_2^2\big] 
		&= (n-s)p - (n-s)\esp [\|\bxi_1\|_2^2 \fcar_{\|\bxi_1\|_2\le\l/\sigma}] \\
		&\ge (n-s)p - (n-s)(p+\sqrt{p}) \prob\big(\|\bxi_1\|_2^2-p\le\sqrt{p}\big).
	\end{align}
	The probability in the right-hand side converges to $\Phi(2^{-1/2})\le 0.8$, 
	so that for $p\ge 64$ and $s\le n/2$, we have
	\begin{equation}
	\esp_\bfT \big[\big\|  \bL(\bXi_{S_\l\setminus S})\big\|_2^2\big] 
	\ge (n-s) (0.2p-0.8p^{1/2}) \ge 0.05 np.
	\end{equation}
	
	Finally, according to \eqref{esperance_norm}, we have for $p$ large enough
	\begin{align}
		\esp_\bfT \big[\|\bL(\bXi_{S\cap S_\lambda})\|_2^2\big] \le 3s^2 + 3sp+12s
		\le \frac{s^2\sqrt{p}}{65} + 3sp
	\end{align}	
	so that
	\begin{equation}
	\esp_\bfT \big[\big\| \hat{\bL}{}^{\rm GHT}-\bL(\bfT) \big\|_2^2\big] 
	\ge \frac{s^2\sqrt{p}}{65}\wedge(0.05np) - 3sp.
	\end{equation}
	We have to distinguish between two cases. If $s^2\sqrt{p}\le 196sp$, then the result holds in view of~\Cref{theorem_lowerbound}. In the opposite case, the result holds as long as $s\le n/61$.
\end{proof}

\begin{proof}[Proof of \Cref{theorem_ST}] To ease notation, for every random vector
$X$ we write $\|X\|_{L_2}$ for $(\esp[\|X\|_2^2])^{1/2}$. We first notice that
	\begin{align}
	\hat{\bL}{}^{\rm GST}-\bL(\bTheta) &=  \sum_{i\in S} \bt_i \bigg\{\bigg(1-\frac{\sigma\gamma}
	{(\|\bY_i\|^2_2-\sigma^2p)_+^{1/2}}\bigg)_+-1\bigg\} \qquad &(:=T_1)\\
	&+ \sigma\sum_{i\in S} \bxi_i \bigg(1-\frac{\sigma\gamma}{\big(\|\bY_i\|^2_2-\sigma^2p\big)_+^{1/2}}\bigg)_+ \qquad &(:=T_2)\\
	&+ \sigma\sum_{i\not\in S} \bxi_i \bigg(1-\frac{\sigma\gamma}{\sigma\big(\|\bxi_i\|^2_2-p\big)_+^{1/2}}\bigg)_+\qquad &(:=T_3),
	\end{align}	
	so that we only need to bound the expected squared norms of the three terms in the right-hand side. These three terms have the following meanings: the first one is the bias of estimation
or the approximation error, the second term is the stochastic error on the support $S$,
whereas the third term is the stochastic error on $S^\complement$.
	
\paragraph{Evaluation of the approximation error}
For the first term, we use the Minkowski inequality as follows
	\begin{align}\label{eqT1}
	\|T_1\|_{L_2}\le \Big\|\sum_{i\in S} \bt_i \fcar_{\|\bY_i\|_2^2\le\sigma^2 (p+\gamma^2)
	}\Big\|_{L_2} + \sigma\Big\|\sum_{i\in S}  \bt_i \frac{\gamma\,\fcar_{\|\bY_i\|_2^2>\sigma^2 
	(p+\gamma^2)}}{\big(\|\bY_i\|^2_2-\sigma^2p\big)^{1/2}}\Big\|_{L_2}.
	\end{align}
	The first part can be treated exactly as in \eqref{eval_on_support} of the proof 
	of~\Cref{theorem_HT}, \ie
	\begin{equation}\label{eqT1a}
		\Big\|\sum_{i\in S} \bt_i \fcar_{\|\bY_i\|_2^2\le\sigma^2 (p+\gamma^2)
	}\Big\|_{L_2} \le \sigma s \big(\gamma+2 +  (2p)^{1/4}\big).
	\end{equation}
	For assessing the second term in the right hand side of \eqref{eqT1}, 
	we set 
	\begin{align}
	T_{1,i} = \frac{\|\bt_i\|_2\gamma\,\fcar_{\|\bY_i\|_2^2>\sigma^2 (p+\gamma^2)}}
	{(\|\bt_i\|^2_2+2\sigma\bt_i^\top\bxi_i+\sigma^2\big(\|\bxi_i\|_2^2-p\big))^{1/2}} .
	\end{align}
	We consider two cases. The first case corresponds to 
	$\big|2\sigma\bt_i^\top\bxi_i+\sigma^2\big(\|\bxi_i\|_2^2-p\big)\big|<\|\bt_i\|_2^2/2$. 
	In this case one easily checks that  $T_{1,i}\le \sqrt{2}\,\gamma$. In the second case, 
	$\big|2\sigma\bt_i^\top\bxi_i+\sigma^2\big(\|\bxi_i\|_2^2-p\big)\big|\ge\|\bt_i\|_2^2/2$, 
	we have 
	\begin{equation}
		\|\bt_i\|_2^2/4 \le 2\s|\bt_i^\top\bxi_i| \quad \text{or} \quad \|\bt_i\|_2^2/4 \le 
		\sigma^2\big|\|\bxi_i\|_2^2-p\big|.
	\end{equation}
	This readily implies that 
	\begin{equation}
		T_{1,i}= \frac{\|\bt_i\|\gamma\,\fcar_{\|\bY_i\|_2^2>\sigma^2 (p+\gamma^2)}}
	  {\big(\|\bY_i\|^2_2-\sigma^2p\big)^{1/2}}\le \|\bt_i\| \le 8\sigma 
		\frac{|\bt_i^\top\bxi_i|}{\|\bt_i\|_2}+ 2\sigma\sqrt{|\|\bxi_i\|_2^2-p|}.
	\end{equation}
	Therefore, using the fact that $\esp[|\bt_i^\top\bxi_i|^2] = \|\bt_i\|_2^2$ 
	and $\esp[|\|\bxi_i\|_2^2-p|^2] = 2p$, we get	
	\begin{equation}
		\|T_{1,i}\|_{L_2} \le 8\sigma + 2\sigma (2p)^{1/4}.
	\end{equation}
	Combining this inequality with \eqref{eqT1} and \eqref{eqT1a}, we arrive at
	\begin{align}
		\|T_1\|_{L_2} &\le \sigma 
		s\gamma+2\sigma s + \sigma s(2p)^{1/4} + 8\sigma s+ 2\sigma s(2p)^{1/4}\\
		&= \sigma s\gamma+10\sigma s + 3\sigma s(2p)^{1/4}.\label{eqT1c}
	\end{align}
	
	\paragraph{Evaluation of the stochastic error on $S$} 
	The second term can be treated
	exactly as in the proof of Theorem~\ref{theorem_HT} using Wishart matrices, \ie
	\begin{equation}
	\|T_2\|_{L_2}^2 \le \sigma^2 s\esp[\|\bXi_S\|^2] \le \sigma^2(3s^2+3sp+12s).
	\end{equation}
	
	\paragraph{Evaluation of the stochastic error on $S^\complement$} For the third term,
	we write
	\begin{align}
		\|T_3\|_{L_2}^2=\sigma^2\bigg\| \sum_{i\not\in S} \bxi_i
		\bigg(1-\frac{\gamma}{\big(\|\bxi_i\|^2_2-p\big)_+^{1/2}}\bigg)_+\bigg\|_{L_2}^2
			&= \sigma^2\sum_{i\not\in S} \bigg\|\bxi_i\bigg(1-\frac{\gamma}{(\|\bxi_i\|^2_2
				-p)_+^{1/2}}\bigg)_+\bigg\|_{L_2}^2\\
			&\le \sigma^2\sum_{i\not\in S} \esp\big[\|\bxi_i\|_2^2 \fcar_{\|\bxi_i\|_2^2> p+
			\gamma^2}\big].
	\end{align}
  We conclude by~\Cref{lemme_esperancechideux} that the last term satisfies
	\begin{equation}
		\|T_3\|_{L_2}^2\le 
		2n\sigma^2  \big( pe^{-\gamma^4/32p}\,\fcar_{\gamma^2< 4p} +
		\gamma^2e^{-\gamma^2/4}\fcar_{\gamma^2\ge 4p}\big).
	\end{equation}
	Using the fact that $\gamma^2= 4\log(1+n/s^2)\vee\big\{16p\log(1+n^2p/s^4)
	\big\}^{1/2}$ we arrive at
	\begin{align}
	\|T_3\|_{L_2}^2
	    &\le \sigma^2	\Big(8s^2\log(1+n/s^2)\Big)\vee \Big(2s^2\sqrt{p}
			\wedge 2np\Big).
	\end{align}
This completes the proof of the theorem. \end{proof}

\subsection{Proofs of the theorems of \Cref{section_robust}}\label{proofs_robust}

This section gathers the proofs all of the results concerning the problem of robust
estimation of a Gaussian mean. 

\begin{proof}[Proof of \Cref{theoreme_bornesuperieure1}]
	In view of~\eqref{definition_groupestimator}, we have
	\begin{equation}
	\big\| (\bTheta-\hat{\bTheta})\bfPi + \sigma\bXi\bfPi \big\|_F^2 + \l \sum_{i=1}^n \|\hat\bt_i\|_2 \le \sigma^2 \| \bXi\bfPi \|_F^2 + \l \sum_{i=1}^n \|\bt_i\|_2.
	\end{equation}
	Developing the left-hand side, this yields
	\begin{align}
	\| (\bTheta-\hat{\bTheta})\bfPi \|_F^2 &\le 2\sigma\sum_{n=1}^n 
	(\bXi\bfPi)_i^\top(\hat\bt_i-\bt_i) + \l \sum_{i=1}^n \big\{ \|\bt_i\|_2 - 
	\|\hat\bt_i\|_2\big\}.
	\end{align}
	Using the Cauchy-Schwarz inequality, we have on the event
	\begin{equation}\label{definition_evenementA}
	\mathcal{A} = \Big\{ \max_{i\in [n]} \big\| (\bXi\bfPi)_i \big\|_2 \le \frac{\lambda}{4\sigma} \Big\}
	\end{equation}
	that
	\begin{align}
	\|(\bTheta-\hat{\bTheta})\bfPi\|_F^2
	&\le \frac\l2 \sum_{i=1}^n \Big\{ \|\hat\bt_i-\bt_i\|_2
			+ 2 \|\bt_i\|_2 - 2 \|\hat\bt_i\|_2 \Big\} \\
	&\le \frac{3\lambda}2 \sum_{i\in S} \|\hat\bt_i-\bt_i\|_2 -
			\frac\l2 \sum_{i\not\in S} \|\hat\bt_i-\bt_i\|_2,
	\end{align}
	where we used the triangular inequality. The last inequality implies that
	\begin{align}\label{theoreme_bornesuperieure1_2}
	\sum_{i\not\in S} \|\hat\bt_i-\bt_i\|_2\le 3\sum_{i\in S} \|\hat\bt_i-\bt_i\|_2.
	\end{align}
	Furthermore, using the Cauchy-Schwarz inequality, we get
	\begin{equation}
	\|(\bTheta-\hat\bTheta)\bfPi\|_F^2 \le \frac{3\lambda}2 \sqrt{s} \|\bTheta-\hat\bTheta\|_F.
	\end{equation}
	We can then apply Lemma~\ref{lemme_Pi}, since~(\ref{theoreme_bornesuperieure1_2}) 
	ensures that the condition is satisfied with $a=3$: provided that $s\le n/16$, we have
	\begin{equation}
	\frac12 \|\hat\bTheta-\bTheta\|_F^2 \le \frac{3\lambda}2 \sqrt{s} \|\hat\bTheta-\bTheta\|_F,
	\end{equation}
	so that
	\begin{equation}
	\|\hat\bTheta-\bTheta\|_F \le 3 \l \sqrt{s}.
	\end{equation}
	The first claim follows now from~\Cref{lemme_probabiliteA}. To show the second inequality,
	it suffices to remark that
	\begin{align}
	\|\hat\bmu-\bmu\|_2^2
			& \le \frac2{n^2}\|(\hat\bTheta-\bTheta)\mathbf 1_n\|_2^2 +2\sigma^2\|\bL_n(\bXi)\|_2^2\\
			& \le \frac{2}{n^2}\bigg(\sum_{i\in [n]}\|\hat\btheta_i-\btheta_i\|_2\bigg)^2 
				+2\sigma^2\|\bL_n(\bXi)\|_2^2\\
			& \le \frac{2}{n^2}\bigg(4\sum_{i\in S}\|\hat\btheta_i-\btheta_i\|_2\bigg)^2 
				+2\sigma^2\|\bL_n(\bXi)\|_2^2\\
			& \le \frac{32s}{n^2}\sum_{i\in S}\|\hat\btheta_i-\btheta_i\|_2^2 
				+2\sigma^2\|\bL_n(\bXi)\|_2^2\\				
			& \le \frac{288s^2\lambda^2}{n^2}
				+2\sigma^2\|\bL_n(\bXi)\|_2^2.								
	\end{align}
To complete the proof, we use the fact that $n\|\bL_n(\bXi)\|_2^2$ is a $\chi^2(p)$
random variable, which implies that with probability at least $1-\d/2$ it is bounded
from above by $2p+4\log(2/\delta)$. 
\end{proof}

\begin{proof}[Proof of \Cref{theoreme_bornesuperieure2}]
To ease notation, we set $\sigma_{n,p}^2 = (n-1)\sigma^2p/n$, 
$w_i=\big( 1-\sigma\gamma/\sqrt{(\|\bZ_i \|_2^2-\sigma_{n,p}^2)_+} \big)_+$,
$\tilde\bDelta_n = \tilde\bL_n-\bL_n(\bTheta)$ and $\bar\bxi_i= \bxi_i-\bar\bxi$.
Then,
	\begin{equation}
		\bZ_i = \bt_i + \tilde\bDelta_n + \sigma\bar\bxi_i.
	\end{equation}	
We first show that with high probability the weights $w_i$
vanish outside the support $S$.

\begin{lemma}\label{lemZ1}
In the event $\Omega_1\cap \Omega_2$, where
\begin{align}
\Omega_2 &= \Big\{\max_i\big|\|\bar\bxi_i\|_2^2-(1-\nicefrac1n)p\big|\le 0.5\gamma^2\Big\},
\end{align}
we have $w_i=0$ for every $i\not\in S$. Furthermore, under the condition
$\gamma^2 > 4\log(4n/\delta)+4 \{p\log(4n/\delta)\}^{1/2}$, the probability of
$\O_2$ is at least $1-\d/2$.
\end{lemma}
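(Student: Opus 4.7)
The plan splits into two independent parts: (i) showing deterministically that $\Omega_1\cap\Omega_2$ forces $w_i=0$ for $i\notin S$, and (ii) controlling the probability of $\Omega_2$ via a union bound on chi-square tails.

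\textbf{Step 1 — Pointwise argument for $i\notin S$.} Since $\bt_i=\mathbf 0_p$ off the support, the identity $\bZ_i=\bt_i+\tilde\bDelta_n+\sigma\bar\bxi_i$ reduces to $\bZ_i=\tilde\bDelta_n+\sigma\bar\bxi_i$, hence by the triangle inequality $\|\bZ_i\|_2\le\|\tilde\bDelta_n\|_2+\sigma\|\bar\bxi_i\|_2$. Recalling $\sigma_{n,p}^2=\sigma^2(1-\nicefrac1n)p$, the weight $w_i$ vanishes exactly when $\|\bZ_i\|_2^2-\sigma_{n,p}^2\le\sigma^2\gamma^2$, equivalently when
\begin{equation}
\|\tilde\bDelta_n\|_2+\sigma\|\bar\bxi_i\|_2\le\sigma\sqrt{(1-\nicefrac1n)p+\gamma^2}.
\end{equation}
On $\Omega_2$ we may use $\sigma\|\bar\bxi_i\|_2\le\sigma\sqrt{(1-\nicefrac1n)p+0.5\gamma^2}$, so it is enough to show
\begin{equation}
\|\tilde\bDelta_n\|_2\le\sigma\Bigl\{\sqrt{(1-\nicefrac1n)p+\gamma^2}-\sqrt{(1-\nicefrac1n)p+0.5\gamma^2}\Bigr\}.
\end{equation}

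\textbf{Step 2 — The key $\sqrt{a+b}-\sqrt{a}$ inequality.} I would combine the two events via the elementary bound $\sqrt{u+v}-\sqrt{u}\ge v/(2\sqrt{u+v})$, applied with $u=(1-\nicefrac1n)p+0.5\gamma^2$ and $v=0.5\gamma^2$. This gives
\begin{equation}
\sqrt{(1-\nicefrac1n)p+\gamma^2}-\sqrt{(1-\nicefrac1n)p+0.5\gamma^2}\ge\frac{0.5\gamma^2}{2\sqrt{(1-\nicefrac1n)p+\gamma^2}}\ge\frac{\gamma^2}{4\sqrt{p+\gamma^2}},
\end{equation}
and the right-hand side is exactly what the event $\Omega_1$ dominates. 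Consequently $\|\bZ_i\|_2^2-\sigma_{n,p}^2\le\sigma^2\gamma^2$ on $\Omega_1\cap\Omega_2$, which forces $w_i=0$. This is the cleanest part: both events were essentially defined so that this calibration succeeds.

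\textbf{Step 3 — Probability of $\Omega_2$.} Since $\bar\bxi_i=\bxi_i-\bar\bxi$ is a Gaussian vector with covariance $(1-\nicefrac1n)\bfI_p$ (a short direct computation), the random variable $\|\bar\bxi_i\|_2^2/(1-\nicefrac1n)$ is distributed as $\chi^2_p$. Write $\eta_i:=\|\bar\bxi_i\|_2^2/(1-\nicefrac1n)$, so that $|\|\bar\bxi_i\|_2^2-(1-\nicefrac1n)p|=(1-\nicefrac1n)|\eta_i-p|$. A union bound gives
\begin{equation}
\prob(\Omega_2^c)\le n\,\prob\!\left(|\eta_1-p|>\tfrac{0.5\gamma^2}{1-1/n}\right)\le n\,\prob\!\left(|\eta_1-p|>0.5\gamma^2\right).
\end{equation}
I would then plug in the standard chi-square deviation bounds recalled in \Cref{section_tails}: with $t=\log(4n/\delta)$, both $\prob(\eta_1-p>2t+2\sqrt{pt})$ and $\prob(p-\eta_1>2\sqrt{pt})$ are at most $e^{-t}=\delta/(4n)$. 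The assumption $\gamma^2>4\log(4n/\delta)+4\sqrt{p\log(4n/\delta)}$ ensures $0.5\gamma^2\ge 2t+2\sqrt{pt}$, hence $\prob(|\eta_1-p|>0.5\gamma^2)\le 2e^{-t}=\delta/(2n)$. Summing over $i$ yields $\prob(\Omega_2^c)\le\delta/2$.

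\textbf{Main obstacle.} The only subtle point is the deterministic calibration in Steps~1--2: the definition of $\Omega_1$ (with the specific factor $\gamma^2/(4\sqrt{p+\gamma^2})$) and the slack of $0.5\gamma^2$ in $\Omega_2$ are tuned precisely so that the inequality $\sqrt{a+b}-\sqrt{a}\ge b/(2\sqrt{a+b})$ matches them exactly. Once this calibration is seen, the probabilistic bound in Step~3 is essentially bookkeeping with chi-square tails.
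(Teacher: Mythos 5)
Your proof is correct and follows essentially the same route as the paper: the deterministic part combines the triangle inequality on $\bZ_i$ with the calibration $\sqrt{u+v}-\sqrt{u}\ge v/(2\sqrt{u+v})$ matching the radii in $\Omega_1$ and $\Omega_2$ (the paper phrases this as a contradiction from $w_i>0$, you phrase it as a sufficient condition for $w_i=0$, which is the same argument), and the probability of $\Omega_2$ follows from a union bound and two-sided chi-square tails, where you in fact supply more detail than the paper does.
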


	Using equation~\eqref{Z0} and the fact that $w_i=0$ for every
	$i\not\in S$ in the event $\Omega_1\cap\Omega_2$, we get	
	\begin{align}
	\bL(\hat{\bTheta}-\bTheta)
		&= \sum_{i=1}^n \bZ_i w_i-\sum_{i\in S} \bt_i
		= \sum_{i\in S} \big(\bZ_i w_i -\bt_i\big).
	\end{align}
	Replacing $\bZ_i$ by $\bt_i + \tilde\bDelta_n + \sigma\bar\bxi_i$ and using the
	triangle inequality, we obtain
	\begin{align}
	\|\bL(\hat{\bTheta}-\bTheta) \|_2
		&\le \Big\|\sum_{i\in S} \big(\bZ_i-\sigma\bar\bxi_i\big) (w_i-1)\Big\|_2 +
			\Big\|\sum_{i\in S} \big(\bZ_i-\bt_i\big)+\sigma\bar\bxi_i(w_i-1)\Big\|_2\\
		&\le \sum_{i\in S} \|\bZ_i-\sigma\bar\bxi_i\|_2 |w_i-1| +
			\Big\|\sum_{i\in S} \big(\tilde\bDelta_n
			+ \sigma\bar\bxi_i w_i\big)\Big\|_2\\
		&\le \sum_{i\in S} \|\bZ_i-\sigma\bar\bxi_i\|_2 |w_i-1| +
			{s}\big\|\tilde\bDelta_n\big\|_2
			+ \sigma\Big\|\sum_{i\in S}\bar\bxi_i w_i\Big\|_2.\label{Z1}
	\end{align}	
	We will now evaluate the first and the third terms of the right-hand side.	
	\begin{lemma}\label{lemZ2}
	There is a sequence of standard Gaussian random variables 
	$\eta_1,\ldots,\eta_n$ such that in the event $\O_1\cap\O_2$, it holds
	\begin{align}
	\|\bZ_i - \sigma\bar\bxi_i\|_2|w_i-1|
		\le 2\s|\eta_i| + \big\{|2\sigma\bar\bxi_i^\top\tilde\bDelta_n|\big\}^{1/2} + 
			1.96\sigma\gamma.
		\label{Z2}
	\end{align}
	\end{lemma}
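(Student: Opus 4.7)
The plan is to rewrite the left-hand side of \eqref{Z2} in a convenient algebraic form and then carry out a case analysis on the value of $w_i$.

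First, note that $\bZ_i-\sigma\bar\bxi_i=\bt_i+\tilde\bDelta_n$, so that the left-hand side equals $a_i|w_i-1|$ with $a_i:=\|\bt_i+\tilde\bDelta_n\|_2$. Expanding the squared norm,
\begin{equation*}
\|\bZ_i\|_2^2-\sigma_{n,p}^2 = a_i^2 + G_i + D_i + E_i,
\end{equation*}
where $G_i:=2\sigma\bt_i^\top\bar\bxi_i$, $D_i:=2\sigma\tilde\bDelta_n^\top\bar\bxi_i$, and $E_i:=\sigma^2(\|\bar\bxi_i\|_2^2-(1-\nicefrac1n)p)$. Since $\bar\bxi_i\sim\mathcal N_p(\mathbf 0,(1-\nicefrac1n)\bfI_p)$, I would set $\eta_i:=\bt_i^\top\bar\bxi_i/(\|\bt_i\|_2\sqrt{1-\nicefrac1n})$ when $\bt_i\ne\mathbf 0$ (and any standard Gaussian otherwise), which gives a marginally $\mathcal N(0,1)$ variable with the deterministic bound $|G_i|\le 2\sigma\|\bt_i\|_2|\eta_i|$. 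On $\Omega_2$, $|E_i|\le\tfrac12\sigma^2\gamma^2$; on $\Omega_1$, $\|\tilde\bDelta_n\|_2\le\sigma\gamma^2/(4\sqrt{p+\gamma^2})\le\sigma\gamma/4$, so the triangle inequality yields $\|\bt_i\|_2\le a_i+\sigma\gamma/4$, which in turn gives $|G_i|\le 2\sigma a_i|\eta_i|+\tfrac12\sigma^2\gamma|\eta_i|$.

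I would then distinguish two cases. When $w_i=0$, one has $|w_i-1|=1$ together with $\|\bZ_i\|_2^2-\sigma_{n,p}^2\le\sigma^2\gamma^2$, which combined with the above estimates produces the quadratic inequality
\begin{equation*}
a_i^2-2\sigma|\eta_i|\,a_i\le \tfrac32\sigma^2\gamma^2+\tfrac12\sigma^2\gamma|\eta_i|+|D_i|.
\end{equation*}
Completing the square in $a_i$, peeling off the $\sqrt{|D_i|}$ contribution via sub-additivity of the square root, and finally completing the square inside the remaining radical gives a bound of the form $a_i\le 2\sigma|\eta_i|+c\sigma\gamma+\sqrt{|D_i|}$ for an explicit constant $c\le 1.96$. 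When instead $w_i>0$, set $\tilde r:=\sqrt{\|\bZ_i\|_2^2-\sigma_{n,p}^2}>\sigma\gamma$, so that $a_i|w_i-1|=a_i\sigma\gamma/\tilde r$: if $\tilde r^2\ge a_i^2/2$ then $a_i|w_i-1|\le\sqrt 2\,\sigma\gamma$, whereas if $\tilde r^2<a_i^2/2$ the expansion forces $a_i^2<2(|G_i|+|D_i|+|E_i|)$, a quadratic of the same type as in the first case yielding the same bound on $a_i$, and hence on $a_i|w_i-1|\le a_i$.

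The main obstacle is the numerical bookkeeping: to recover precisely the coefficient $2$ in front of $\sigma|\eta_i|$ and the constant $1.96$ in front of $\sigma\gamma$, one has to balance the cross term $2\sigma\|\tilde\bDelta_n\|_2|\eta_i|$ (absorbed through $\Omega_1$) against the completion of squares in the residual $\sigma^2\eta_i^2+\tfrac32\sigma^2\gamma^2+\tfrac12\sigma^2\gamma|\eta_i|$. A subtler structural point is that $\eta_i$ is defined using only the deterministic vector $\bt_i$, so its Gaussian concentration is genuine, whereas $D_i$ depends on the data through $\tilde\bDelta_n$; keeping $\sqrt{|D_i|}$ intact at this stage is what later enables the summation $\sum_{i\in S}\sqrt{|D_i|}$ to be treated cleanly (via Cauchy--Schwarz on the single scalar $\bar\bxi_i^\top\tilde\bDelta_n$) in the subsequent part of the proof of Proposition~\ref{theoreme_bornesuperieure2}.
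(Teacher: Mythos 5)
Your setup (the expansion $\|\bZ_i\|_2^2-\sigma_{n,p}^2=a_i^2+G_i+D_i+E_i$, the choice of $\eta_i$, and the use of $\O_1,\O_2$ to bound $\|\tilde\bDelta_n\|_2\le\sigma\gamma/4$ and $|E_i|\le\tfrac12\sigma^2\gamma^2$) matches the paper, and your case $w_i=0$ does deliver $a_i\le 2\s|\eta_i|+\sqrt{1.5}\,\sigma\gamma+\sqrt{|D_i|}$, which is within the stated constants. The gap is in the case $w_i>0$ with $\tilde r^2<a_i^2/2$: there the claim that you get ``a quadratic of the same type as in the first case yielding the same bound on $a_i$'' is not true. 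From $a_i^2<2(|G_i|+|D_i|+|E_i|)$ and $|G_i|\le 2\sigma a_i|\eta_i|+\tfrac12\sigma^2\gamma|\eta_i|$ you obtain $a_i^2<4\sigma a_i|\eta_i|+\sigma^2\gamma|\eta_i|+2|D_i|+\sigma^2\gamma^2$; completing the square gives at best $a_i\le 4\s|\eta_i|+\sqrt{2|D_i|}+\sigma\gamma$, i.e.\ the coefficient of $\s|\eta_i|$ doubles and the $D_i$-term picks up a factor $\sqrt2$, so bounding $a_i|w_i-1|\le a_i$ yields a strictly weaker inequality than \eqref{Z2}. Re-tuning the split threshold $\tilde r^2\ge\alpha a_i^2$ does not repair this: making $\alpha$ small to keep the quadratic tight blows up the bound $\sigma\gamma/\sqrt{\alpha}$ in the complementary sub-case, so no choice of $\alpha$ recovers the stated constants $(2,\,1,\,1.96)$. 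Your sketch therefore proves a constant-degraded version of the lemma (which would propagate into the constant $4$ in \eqref{Z6}, the $\nicefrac4n$ update in \Cref{algo:3} and the $33$ in \Cref{lemma:recursion}), but not the lemma as stated.

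The paper avoids the case split altogether: it lower-bounds $\|\bZ_i\|_2^2-\sigma_{n,p}^2\ge(a_i-\s|\eta_i|)^2-c$, with $c$ collecting $\sigma^2\eta_i^2-\sigma^2\|\bar\bxi_i\|_2^2+2\sigma\|\tilde\bDelta_n\|_2|\eta_i|-2\sigma\bar\bxi_i^\top\tilde\bDelta_n+\sigma_{n,p}^2$, and then invokes the elementary inequality
\begin{equation}
a\,\min\Big(1,\frac{d}{\{(a-b)^2-c\}_+^{1/2}}\Big)\le b+d+\sqrt{c_+},\qquad a,b,c,d>0,
\end{equation}
with $a=\|\bZ_i-\sigma\bar\bxi_i\|_2$, $b=\s|\eta_i|$, $d=\sigma\gamma$. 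This inequality treats jointly the regime where the denominator $(\|\bZ_i\|_2^2-\sigma_{n,p}^2)_+^{1/2}$ barely exceeds $\sigma\gamma$ --- exactly the regime where your argument loses a factor --- without any doubling of the cross term, and the $\O_1,\O_2$ bounds on $\sqrt{c_+}$ then give $1+0.25+\sqrt{0.5}\le1.96$. If you want to keep your case-analysis structure, you need an analogue of this coupling between $a_i$ and $\tilde r$ in the small-$\tilde r$ regime rather than the crude bound $a_i|w_i-1|\le a_i$.
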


	Let us introduce the $p\times s$ matrix $\bXi_S = [\,\bar\bxi_i; i\in S\,]$ (the matrix
	obtained by concatenating the vectors $\bar\bxi_i$ with subscript $i$ running over $S$).
	Using the H\"older inequality, one can check that
	\begin{align}
	\sum_{i\in S}\big\{2\s|\bar\bxi_i^\top\tilde\bDelta_n|\big\}^{1/2}
			&\le \sqrt{2\sigma}\, s^{3/4} \bigg(\sum_{i\in S} |\bar\bxi_i^\top\tilde\bDelta_n|^2
				\bigg)^{1/4}\\
			&= \sqrt{2\sigma}\, s^{3/4} \|\bXi_S^\top\tilde\bDelta_n\|_2^{1/2}\\
			&\le \sqrt{2\sigma}\, s^{3/4} \|\bXi_S\|^{1/2}\|\tilde\bDelta_n\|_2^{1/2}.
			\label{Z3}
	\end{align}
	Finally, the third term in the right-hand side of \eqref{Z1} can be bounded as follows:
	\begin{equation}
		\Big\| \sum_{i\in S}\bar\bxi_iw_i\Big\|_2 = \|\bXi_S \bsw\|_2\le \sqrt{s}\,\|\bXi_S\|.
		\label{Z4}
	\end{equation}
	Combining \eqref{Z1}, \eqref{Z2}, \eqref{Z3} and \eqref{Z4}, we arrive at
	\begin{align}
	\|\bL(\hat{\bTheta}-\bTheta) \|_2
		&\le 2\sigma\sum_{i\in S}\big(|\eta_i| + \big\{2\s|\tilde\bDelta_n^\top 
		\bar\bxi_i|\big\}^{1/2}\big) +1.96\,\sigma s\gamma
			+ {s}\big\|\tilde\bDelta_n\big\|_2  + \sigma\sqrt{s}\,\|\bXi_S\|\\
		&\le 2\sigma\sum_{i\in S}|\eta_i|+2.21\,\sigma s\g+\sqrt{2\sigma}\, s^{3/4} \|\bXi_S\|^{1/2}
		\big\|\tilde\bDelta_n\big\|_2^{1/2}+\sigma\sqrt{s}\,\|\bXi_S\|\\
		&\le 2\sigma\sum_{i\in S}|\eta_i| +2.34\,\sigma s\gamma + 
		2\sigma\sqrt{s}\,\|\bXi_S\|.\label{Z5}
	\end{align}			
	According to \citep[Corollary~5.35]{vershynin_2012} (recalled
	in~\Cref{lemma_Wishart1} below for the reader's convenience) the event
	$\O_3 = \big\{ \|\bXi_S\| \le \sqrt{s}+\sqrt{p}+\{2\log(8/\delta)\}^{1/2} \big\}$
  has probability at least $1-\delta/8$. One can also check that with probability
	at least $1-\delta/8$, the event $\Omega_4 = \{\sum_{i\in S}|\eta_i| \le s + 
	\sqrt{s\log(8/\delta)}\}$ is realized. Assuming that $\delta\le 1/2$ so that 
	$\log (8/\delta)\le (4/3)\log (4/\delta)$, this implies that in $\O_1\cap\bar\O$ 
	with $\bar\O=\O_2\cap\O_3\cap\Omega_4$, we have
	\begin{align}
		\|\bL(\hat{\bTheta}-\bTheta) \|_2				
		&\le 4\sigma(\,s\gamma+ s + \sqrt{sp} +  \{2s\log(4/\delta)\}^{1/2}).
	\end{align}
	This completes the proof.
	\end{proof}

\section{Some technical lemmas}\label{section_proofs2}

\begin{lemma}\label{lemme_Pi}
	Let us introduce the projection matrix $\bfPi = \bfI_n - \frac1n \bfJ_n$, where $\bfI_n$ and $\bfJ_n$ are respectively the $n\times n$ identity matrix and
    the constant matrix with all the entries equal to $1$.
    Let $\bfU$ be a $p\times n$ matrix with columns $\bu_i\in\RR^p$ satisfying, for some set $S\subset \{1,\ldots,n\}$ and some real number $a>0$,
	\begin{equation}\label{cone_cond}
		\sum_{i\not\in S} \|\bu_i\|_2 \le a \sum_{i\in S} \|\bu_i\|_2,
	\end{equation}
	then
	\begin{equation}
		\|\bfU\bfPi\|_F^2 \ge \Big( 1 - \frac{(1+a)^2|S|}{n} \Big) \|\bfU\|_F^2.
	\end{equation}
\end{lemma}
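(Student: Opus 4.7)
The plan is to exploit the explicit form of the projection $\bfPi = \bfI_n - \frac{1}{n}\bfJ_n$ to get a clean identity for $\|\bfU\bfPi\|_F^2$ and then reduce the inequality to a triangle/Cauchy--Schwarz estimate that uses the cone condition \eqref{cone_cond}.

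First I would observe that $\bfU\bfPi = \bfU - \bar\bu\,\mathbf 1_n^\top$, where $\bar\bu = \frac1n\sum_{i=1}^n \bu_i$, because $\frac1n\bfU\bfJ_n$ is the matrix whose every column equals $\bar\bu$. Expanding the Frobenius norm column by column then gives the identity
\begin{equation}
\|\bfU\bfPi\|_F^2 \;=\; \sum_{i=1}^n\|\bu_i-\bar\bu\|_2^2
\;=\;\|\bfU\|_F^2-n\|\bar\bu\|_2^2
\;=\;\|\bfU\|_F^2 - \frac1n\Big\|\sum_{i=1}^n\bu_i\Big\|_2^2.
\end{equation}
Thus it is enough to prove that
\begin{equation}
\Big\|\sum_{i=1}^n\bu_i\Big\|_2^2 \;\le\; (1+a)^2 |S|\,\|\bfU\|_F^2.
\end{equation}

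Next I would apply the triangle inequality and the cone condition \eqref{cone_cond}:
\begin{equation}
\Big\|\sum_{i=1}^n\bu_i\Big\|_2 \;\le\; \sum_{i\in S}\|\bu_i\|_2+\sum_{i\notin S}\|\bu_i\|_2 \;\le\; (1+a)\sum_{i\in S}\|\bu_i\|_2,
\end{equation}
and then bound the right-hand side by Cauchy--Schwarz:
\begin{equation}
\sum_{i\in S}\|\bu_i\|_2 \;\le\; \sqrt{|S|}\,\Big(\sum_{i\in S}\|\bu_i\|_2^2\Big)^{1/2} \;\le\; \sqrt{|S|}\,\|\bfU\|_F.
\end{equation}
Squaring and inserting back in the identity yields exactly the desired bound $\|\bfU\bfPi\|_F^2 \ge \bigl(1-(1+a)^2|S|/n\bigr)\|\bfU\|_F^2$.

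There is no real obstacle here: the argument is a two-line identity plus triangle inequality plus Cauchy--Schwarz. The only point worth double-checking is the identity $\bfU\bfPi = \bfU - \bar\bu\mathbf 1_n^\top$ and the fact that this gives $\|\bfU\|_F^2 - n\|\bar\bu\|_2^2$ (which is the standard decomposition of total sum of squares into within + between, and is consistent with $\bfPi$ being an orthogonal projection of rank $n-1$). The rest is routine.
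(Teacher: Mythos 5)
Your proof is correct and follows essentially the same route as the paper: the Pythagorean identity $\|\bfU\bfPi\|_F^2=\|\bfU\|_F^2-\frac1n\|\sum_i\bu_i\|_2^2$ (the paper writes it as $\|\bfU\|_F^2-\frac1{n^2}\|\bfU\bfJ_n\|_F^2$, which is the same quantity), then the triangle inequality with the cone condition and Cauchy--Schwarz on $S$. Nothing is missing.
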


\begin{proof}
	We denote by $\bu_i\in\RR^n$ the column vector corresponding to the $i$-th row of $\bfU$.
    On the one hand, since $\nicefrac1n\bfJ_n$ is an orthogonal projection matrix, we have by the Pythagorean theorem
	\begin{align}
		\|\bfPi\bu^i\|_2^2 &= \|\bu^i\|_2^2 -\frac1{n^2} \|\bfJ_n\bu^i\|_2^2.
	\end{align}
    In particular, this implies that
	\begin{align}\label{ineq:1}
		\|\bfU\bfPi\|_F^2 &= \|\bfU\|_F^2 -\frac1{n^2} \|\bfU\bfJ_n\|_F^2.
	\end{align}
	On the other hand,
	\begin{align}
		\|\bfU\bfJ_n\|_F &= \|\mathbf \bfU 1_n\mathbf 1_n^\top\|_F =\sqrt{n}\, \|\bfU \mathbf 1_n^\top\|_2 = \sqrt{n}\, \Big\|\sum_{i=1}^n \bu_i\Big\|_2
        \le \sqrt{n}\, \sum_{i=1}^n \|\bu_i\|_2.
	\end{align}
    Using \eqref{cone_cond} and the Cauchy-Schwarz inequality, we get
	\begin{align}
		\sum_{i=1}^n \|\bu_i\|_2 &\le (1+a)\sum_{i\in S} \|\bu_i\|_2 \le (1+a)|S|^{1/2}\Big(\sum_{i\in S} \|\bu_i\|^2_2\Big)^{1/2} \le (1+a)|S|^{1/2}\|\bfU\|_F.
	\end{align}
    This readily yields $\|\bfU\bfJ_n\|_F \le (1+a) (n|S|)^{1/2}\|\bfU\|_F$. Combining this inequality with \eqref{ineq:1}, we get the claim of the lemma.
\end{proof}

	\begin{proof}[Proof of \Cref{lemma:recursion}]
		We prove by inductive reasoning that for every $k$ we have
		$\epsilon_k^2\le p$ and \eqref{recursion:claim}. This is trivially
		true for $k=0$. Assume that these claims hold true for some given 
		value $k$. Let us check them for the value $k+1$. 
		From recursion \eqref{eq:recursion}, one can infer that  
		\begin{align}\label{eq:recursion1a}
				\epsilon_{k+1} \le \frac{16.5s}{n}(\epsilon_k\vee p^{1/4}\epsilon_k^{1/2}) +a
				\le \frac{33s}{n}(\epsilon_k\vee p^{1/4}\epsilon_k^{1/2})\vee 2a.
		\end{align}
		The conditions of the lemma directly imply that $\epsilon_{k+1}^2\le p$. Therefore,
		\begin{align}\label{eq:recursion1b}
				\epsilon_{k+1} \le \frac{33sp^{1/4}\epsilon_k^{1/2}}{n}\vee 2a.
		\end{align}
		Applying inequality \eqref{recursion:claim}, we get the claim.
		
		One can note that even in the case $\epsilon_k>\sqrt{p}$, we get 
		from \eqref{eq:recursion1a} that
		\begin{align}\label{eq:recursion2}
				\epsilon_{k+1} \le \frac{1}{2}\epsilon_k +a.
		\end{align}
		Therefore, if the preliminary estimator is not good enough to guarantee that 
		$\epsilon_0\le \sqrt{p}$, after a number of steps at most logarithmic in 
		$\epsilon_0/(\sqrt{p}-2a)$, we will get an error $\epsilon_k$ smaller than
		$\sqrt{p}$.
	\end{proof}

	\begin{proof}[Proof of \Cref{lemZ1}]
		It follows from the definition of $w_i$ that, for every $i\not\in S$, $w_i>0$ 
		is equivalent to
		$$
				\|\sigma\bar\bxi_i+\tilde\bDelta_n\|_2^2 > \sigma^2(p+\gamma^2).
		$$
		In view of the triangle inequality, this implies that
		$$
				\sigma\|\bar\bxi_i\|_2 + \|\tilde\bDelta_n\|_2 > \sigma\sqrt{p+\gamma^2}.
		$$			
		In the event $\Omega_1\cap\Omega_2$, the last inequality implies
		$$		
				\sqrt{p+0.5\gamma^2} + \frac{\gamma^2}{4\sqrt{p+\gamma^2}}> \sqrt{p+\gamma^2}.
		$$
		It is easy to see that the last inequality is never true, implying thus that
		$w_i=0$. Indeed,
		\begin{align}
				\sqrt{p+\gamma^2} -\sqrt{p+0.5\gamma^2}  
					& = \frac{0.5\gamma^2}{\sqrt{p+\gamma^2} +\sqrt{p+0.5\gamma^2}}\\
					& > \frac{0.5\gamma^2}{\sqrt{p+\gamma^2} +\sqrt{p+\gamma^2}}
					= \frac{\gamma^2}{4\sqrt{p+\gamma^2}}.
		\end{align}
		The fact that the probability of $\O_2$ is at least $1-\d/2$ is
		a consequence of the tail bound of a chi-squared random variable and the
		union bound.
	\end{proof}

	\begin{proof}[Proof of \Cref{lemZ2}] The definition of $\bZ_i$ yields
	\begin{align}
	\|\bZ_i-\sigma\bar\bxi_i\|_2|w_i-1|  &= \|\bZ_i-\sigma\bar\bxi_i\|_2 \min\bigg\{ 1,
	\frac{\sigma\gamma}{\big(\|\bZ_i\|^2_2-\sigma_{n,p}^2\big)^{1/2}_+} \bigg\}.
	\end{align}
	Let us introduce the random variables $\eta_i = 
	\frac{n}{n-1}\bt_i^\top\bar\bxi_i/\|\bt_i\|_2$. It is clear that $\eta_i$ is Gaussian
	with zero mean and unit variance. Repeated use of the triangle inequality leads to
	\begin{align}
	\|\bZ_i\|_2^2
		&\ge \|\bZ_i-\sigma\bar\bxi_i\|_2^2  + \|\sigma\bar\bxi_i\|_2^2 + 2\sigma\bar\bxi_i^\top(\bt_i
		+\tilde\bDelta_n) \\
		&\ge \|\bZ_i-\sigma\bar\bxi_i\|_2^2  + \|\sigma\bar\bxi_i\|_2^2 -2\sigma\|\bt_i\|_2|\eta_i|
		+2\sigma\bar\bxi_i^\top\tilde\bDelta_n\\
		&\ge \|\bZ_i-\sigma\bar\bxi_i\|_2^2  + \|\sigma\bar\bxi_i\|_2^2 -2\sigma\|\bZ_i-\sigma\bar\bxi_i\|_2
		|\eta_i| -2\sigma\|\tilde\bDelta_n\|_2|\eta_i|+2\sigma\bar\bxi_i^\top\tilde\bDelta_n\\
		&\ge (\|\bZ_i-\sigma\bar\bxi_i\|_2-\s|\eta_i|)^2 -\sigma^2\eta_i^2 + \|\sigma\bar\bxi_i\|_2^2
		-2\sigma\|\tilde\bDelta_n\|_2|\eta_i|+2\sigma\bar\bxi_i^\top\tilde\bDelta_n.
	\end{align}
	One can check the following simple fact: if $a,b,c,d>0$ then
	$$
	a\min\Big(1,\frac{d}{\{(a-b)^2-c\}_+^{1/2}}\Big)\le  b + d+\sqrt{c_+}.
	$$
	Taking in this inequality $a = \|\bZ_i-\sigma\bar\bxi_i\|_2$, 
	$c = \sigma^2\eta_i^2 - \|\sigma\bar\bxi_i\|_2^2
		+2\sigma\|\tilde\bDelta_n\|_2|\eta_i|-2\sigma\bar\bxi_i^\top\tilde\bDelta_n + 
		\sigma^2_{n,p}$, $b=\s|\eta_i|$ and $d=\sigma\g$, we arrive at
	\begin{align}
	\|\bZ_i-\sigma\bar\bxi_i\|_2|w_i-1|
		&\le 2\s|\eta_i| + \sigma\g+ \|\tilde\bDelta_n\|_2\\
		&\qquad+\sqrt{(\sigma_{n,p}^2-\sigma^2\|\bar\bxi_i\|_2^2)_+}
		+\sqrt{(2\sigma\bar\bxi_i^\top\tilde\bDelta_n -\|\tilde\bDelta_n\|_2^2)_+}.
	\end{align}
	In the event $\O_1\cap\O_2$, we upper bound $\|\tilde\bDelta_n\|_2$ and
	$|\sigma_{n,p}^2-\sigma^2\|\bar\bxi_i\|_2^2|$ respectively by $0.25\sigma\gamma$ 
	and $0.5\sigma^2\gamma^2$. This leads to the claim of the lemma.
\end{proof}

\section{Tail bounds}\label{section_tails}

In this section, we recall  well-known results on the tails of
some random variables appearing in the analysis of the Gaussian models
of the previous sections.

\begin{lemma}\label{lemme_chideux}
	If $\eta$ is a random variable drawn from the $\chi^2_d$ distribution, then for every $x>0$
	\begin{equation}
		\prob\big(\eta \ge d+ x\big) \le e^{-\frac1{16d} x (x\wedge 4d)} .
	\end{equation}
\end{lemma}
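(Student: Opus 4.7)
The plan is to apply the standard Cramér--Chernoff method to the chi-squared moment generating function, optimize the tilting parameter, and then massage the resulting rate function into the two-regime bound that appears in the statement.

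Concretely, for any $\lambda\in(0,\nicefrac12)$ we have $\esp[e^{\lambda\eta}] = (1-2\lambda)^{-d/2}$, so Markov's inequality gives
\begin{align}
\prob(\eta\ge d+x) \le (1-2\lambda)^{-d/2}\,e^{-\lambda(d+x)}.
\end{align}
Minimizing the right-hand side in $\lambda$ leads to $\lambda^\star = \frac{x}{2(d+x)}\in(0,\nicefrac12)$, which plugged back into the bound yields
\begin{align}
\prob(\eta\ge d+x)\le \exp\!\Big(-\tfrac{d}{2}\,\phi(x/d)\Big),\qquad \phi(u) := u - \log(1+u).
\end{align}
So the entire task reduces to establishing the pointwise inequality
\begin{align}
\tfrac{d}{2}\,\phi(x/d) \;\ge\; \frac{x(x\wedge 4d)}{16 d},
\end{align}
i.e., with $u=x/d$, the two-regime bound $\phi(u) \ge u^2/8$ for $u\le 4$ and $\phi(u)\ge u/2$ for $u>4$.

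The second regime is the easier one: $h(u):=\phi(u)-u/2$ satisfies $h'(u) = \tfrac12 - \tfrac{1}{1+u}$, which is nonnegative on $[1,\infty)$, and a direct computation gives $h(4) = 2-\log 5 > 0$, so $h(u)\ge 0$ on $[4,\infty)$. The first regime is where the only real work lies: I set $g(u):=\phi(u)-u^2/8$ and compute
\begin{align}
g'(u) = \frac{u}{1+u} - \frac{u}{4} = \frac{u(3-u)}{4(1+u)},
\end{align}
which is positive on $(0,3)$ and negative on $(3,4)$, so $g$ attains its minimum on $[0,4]$ at either $u=0$ or $u=4$. Since $g(0)=0$ and $g(4)=4-\log 5 - 2 > 0$, we conclude $g\ge 0$ on $[0,4]$, finishing the argument.

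The two regime inequalities combine exactly with the definitions to give $\tfrac{d}{2}\phi(x/d) \ge x^2/(16d)$ when $x\le 4d$ and $\tfrac{d}{2}\phi(x/d) \ge x/4 = 4dx/(16d)$ when $x>4d$, i.e. the unified bound $\tfrac{d}{2}\phi(x/d)\ge x(x\wedge 4d)/(16d)$. Substituting into the Chernoff bound yields the claim. The main (mildly) delicate point is verifying $\phi(u)\ge u^2/8$ on $[0,4]$ — sharper textbook bounds such as $\phi(u)\ge u^2/(2+2u/3)$ fail at $u=4$, so one really needs the monotonicity argument above rather than a one-line inequality.
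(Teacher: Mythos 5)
Your proof is correct: the Chernoff bound with the $\chi^2_d$ moment generating function gives exactly $\exp\{-\tfrac{d}{2}\phi(x/d)\}$ with $\phi(u)=u-\log(1+u)$, your optimizer $\lambda^\star=\tfrac{x}{2(d+x)}$ is right, and the two elementary calculus checks ($\phi(u)\ge u^2/8$ on $[0,4]$ via the sign of $g'(u)=\tfrac{u(3-u)}{4(1+u)}$ and the endpoint values $g(0)=0$, $g(4)=2-\log 5>0$; and $\phi(u)\ge u/2$ on $[4,\infty)$ by monotonicity plus $2-\log 5>0$) do yield the stated two-regime bound $\tfrac{d}{2}\phi(x/d)\ge \tfrac{x(x\wedge 4d)}{16d}$. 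The paper itself offers no proof of this lemma — it is recalled in the appendix as a well-known tail bound — so your argument simply supplies the standard Cram\'er--Chernoff derivation that the authors leave implicit, and it is a complete and valid justification of the constants $16$ and $4d$ as stated.
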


\begin{lemma}\label{lemme_esperancechideux}
	If $\eta$ is a random variable drawn from the $\chi^2_d$ distribution, then for every $x>0$ and $d\ge 2$,
	\begin{equation}
	\esp\big(\eta \fcar_{\eta\ge d+x}\big) \le 2d\,e^{-x^2/32d} \,\fcar_{x< 4d} +
	2xe^{-x/4} \fcar_{x\ge 4d}.
	\end{equation}	
\end{lemma}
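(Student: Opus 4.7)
The plan is to reduce this bound on the truncated moment to a tail probability, by exploiting the classical density identity
\[
y f_{\chi^2_d}(y) \;=\; d \, f_{\chi^2_{d+2}}(y),
\]
which follows from $\Gamma(d/2+1)=(d/2)\Gamma(d/2)$. Integrating over $[d+x,\infty)$ gives
\[
\esp\bigl[\eta\,\fcar_{\eta\ge d+x}\bigr] \;=\; d\,\prob\bigl(\chi^2_{d+2}\ge d+x\bigr),
\]
so the task reduces to controlling a tail of a chi-squared with $d+2$ degrees of freedom. Alternatively (and it may be cleaner for constant-tracking), one may perform integration by parts directly, writing
\[
\esp\bigl[\eta\,\fcar_{\eta\ge d+x}\bigr] \;=\; (d+x)\prob(\eta\ge d+x) + \int_{d+x}^{\infty}\prob(\eta\ge y)\,dy,
\]
and plug in the tail bound from Lemma \ref{lemme_chideux}.

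I would then split according to the regime of Lemma \ref{lemme_chideux}. For $x\ge 4d$, the linear tail $\prob(\eta\ge y)\le e^{-(y-d)/4}$ is valid on the whole range $y\ge d+x\ge 5d$, so the integration-by-parts identity gives
\[
\esp\bigl[\eta\,\fcar_{\eta\ge d+x}\bigr] \;\le\; (d+x)e^{-x/4} + 4e^{-x/4} \;=\; (d+x+4)e^{-x/4}.
\]
Since $d\ge 2$ and $x\ge 4d\ge 8$ force $d+4\le x$, we conclude $(d+x+4)\le 2x$ and obtain $2xe^{-x/4}$, as required.

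For the regime $0\le x<4d$ the integrals split at $y=5d$: the sub-Gaussian part contributes $(d+x)e^{-x^2/16d}+\int_x^{4d}e^{-t^2/16d}\,dt$ and the linear tail adds $4e^{-d}$, which is exponentially small. The boundary term is easily absorbed into $2d\,e^{-x^2/32d}$ (here one uses $x<4d$ and $d\ge 2$). For the Gaussian integral, the decisive step is the pointwise inequality $e^{-t^2/16d}\le e^{-x^2/32d}e^{-t^2/32d}$ valid for $t\ge x$, which factors the desired rate out of the integral and leaves $\int_0^\infty e^{-t^2/32d}\,dt=\sqrt{8\pi d}$. Combining the two pieces, the bound has the right exponential factor $e^{-x^2/32d}$ with a constant that is controlled by a multiple of $d$.

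The main obstacle will be constant tracking in Case 2 in order to get the prefactor exactly $2d$: the naive estimate produces a term $\sqrt{d}$ (from the Gaussian integral) rather than $d$, which is harmless for large $d$ but borderline when $d=2$. I would handle the small-$x$ range separately by the trivial upper bound $\esp[\eta\,\fcar_{\eta\ge d+x}]\le\esp[\eta]=d$, which already satisfies $d\le 2d\,e^{-x^2/32d}$ as long as $x^2\le 32d\log 2$, covering the range where the more refined computation is loose. Matching the two sub-ranges at an appropriately chosen threshold (e.g.\ $x^2\asymp d$) closes the gap and delivers the stated bound with the factor $2d$.
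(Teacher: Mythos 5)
Your treatment of the regime $x\ge 4d$ is essentially identical to the paper's: integration by parts, the tail bound of \Cref{lemme_chideux}, and the observation that $d+x+4\le 2x$ there; that part is fine. The gap is in the regime $x<4d$, and it is exactly the point you flag as "the main obstacle" but then dismiss with an unproved matching claim. The paper does not integrate by parts in this regime at all: it applies the Cauchy--Schwarz inequality,
\begin{equation}
\esp\big(\eta\,\fcar_{\eta\ge d+x}\big)\le \big(\esp[\eta^2]\big)^{1/2}\,\prob(\eta\ge d+x)^{1/2}
=\sqrt{d(d+2)}\;e^{-x^2/32d}\le 2d\,e^{-x^2/32d}\qquad (d\ge 2),
\end{equation}
which is precisely why the exponent in the statement is $x^2/(32d)$, i.e.\ half of the exponent $x^2/(16d)$ appearing in \Cref{lemme_chideux}. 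This one-line device is the missing idea in your proposal.

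Your proposed patch does not close the gap for small $d$. The trivial bound $\esp[\eta\,\fcar_{\eta\ge d+x}]\le d\le 2d\,e^{-x^2/32d}$ is only valid for $x^2\le 32d\log 2$, and just beyond that threshold your refined estimate is not strong enough: take $d=2$ and $x=6.8$. Then the target is $2d\,e^{-x^2/32d}\approx 1.94$, the trivial bound $d=2$ already exceeds it, and your boundary term alone gives $(d+x)e^{-x^2/16d}\approx 8.8\times e^{-1.445}\approx 2.07>1.94$, before adding the Gaussian-integral contribution $\sqrt{8\pi d}\,e^{-x^2/32d}$, which by itself exceeds $2d\,e^{-x^2/32d}$ whenever $d\le 2\pi$, and the residual term $4e^{-d}$. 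Hence no choice of matching threshold $x^2\asymp d$ can rescue the argument at $d=2$ (and nearby small $d$) with the estimates you use; the lemma is of course still true there (the exact value at $(d,x)=(2,6.8)$ is about $0.13$), but the looseness of the constant $16$ in \Cref{lemme_chideux} makes your route fail. Either replace the whole small-$x$ analysis by the Cauchy--Schwarz step above, or you would need a genuinely sharper tail bound in that range.
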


\begin{proof}
	First assume that $x\ge 4d$. Combining the relation
	\begin{align}
		\esp\big(\eta\, \fcar_{\eta\ge d+x}\big) &= (d+x)\prob(\eta\ge d+x)+\int_x^\pinf \prob(\eta\ge d+t)\,dt
	\end{align}
	with Lemma~\ref{lemme_chideux}, we get
	\begin{align}
		\esp\big(\eta\, \fcar_{\eta\ge d+x}\big)
		&\le (d+x) e^{-x/4} + \int_x^\pinf e^{-t/4}\, dt\\
		&\le (4+d+x)e^{-x/4}\le (4d+x)e^{-x/4}\le 2xe^{-x/4}. \phantom{\int_0^\pinf}
	\end{align}
	Now assume that $x<4d$. Then, using the Cauchy-Schwarz inequality and
	Lemma~\ref{lemme_chideux},
	\begin{align}
	\esp\big(\eta\, \fcar_{\eta\ge d+x}\big) \le \sqrt{d(d+2)} e^{-x^2/32d}.
	\end{align}
\end{proof}

\begin{lemma}\label{lemme_probabiliteA}
	Denote $\bfPi=I_n-\frac1n \bfJ_n$  where $\bfI_n$ is the identity matrix in dimension $n$ and $\bfJ_n$ is the constant matrix with only $1$ coefficients, and assume that
	\begin{equation}
		\bxi_1,\ldots,\bxi_n \simiid \mathcal{N}(0,\bfI_p).
	\end{equation}
	Then, with probability at least $1-\d$, the matrix $\bXi = [\bxi_1,\ldots,\bxi_n]$ 
	satisfies
	\begin{equation}
		\max_{i=1,\ldots,n}  \big\|(\bXi\bfPi)_i\big\|_2^2 \le 2p+16\log(n/\delta).
	\end{equation}
\end{lemma}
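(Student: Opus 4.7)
The plan is to first identify $(\bXi\bfPi)_i$ explicitly, then observe that its squared norm is a scaled chi-squared random variable, and finally combine the tail bound \Cref{lemme_chideux} with a union bound over $i\in[n]$.

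First, I would compute the columns of $\bXi\bfPi$ by noting that the $i$-th column of $\bfPi=\bfI_n-(1/n)\bfJ_n$ is $e_i-(1/n)\mathbf 1_n$. Hence
\begin{equation*}
(\bXi\bfPi)_i = \bXi\big(e_i-(1/n)\mathbf 1_n\big)=\bxi_i-\bar\bxi,\qquad \bar\bxi=\frac1n\sum_{j=1}^n\bxi_j.
\end{equation*}
Since each $\bxi_j\sim\mathcal N(\mathbf 0,\bfI_p)$ independently, $\bxi_i-\bar\bxi$ is centred Gaussian with covariance
\begin{equation*}
\operatorname{Cov}(\bxi_i-\bar\bxi)=\bfI_p-\tfrac2n\bfI_p+\tfrac1n\bfI_p=\big(1-\tfrac1n\big)\bfI_p.
\end{equation*}
Consequently $\|\bxi_i-\bar\bxi\|_2^2=(1-1/n)\,\eta_i$ with $\eta_i\sim\chi^2_p$ (although the $\eta_i$'s are not independent, marginal tail bounds are all I need).

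Next I would apply \Cref{lemme_chideux} to each $\eta_i$ and take a union bound:
\begin{equation*}
\prob\Big(\max_{i\in[n]}\|\bxi_i-\bar\bxi\|_2^2\ge p+x\Big)\le \prob\Big(\max_{i\in[n]}\eta_i\ge p+x\Big)\le n\,e^{-\frac{x(x\wedge 4p)}{16p}}.
\end{equation*}
Setting the right-hand side equal to $\delta$, I would solve for $x$ in both regimes. If $x\le 4p$, then $x=4\sqrt{p\log(n/\delta)}$ suffices; if $x>4p$, then $x=4\log(n/\delta)$ suffices. Taking $x=4\sqrt{p\log(n/\delta)}+4\log(n/\delta)$ covers both cases, and the elementary inequality $2\sqrt{ab}\le a+b$ gives
\begin{equation*}
p+4\sqrt{p\log(n/\delta)}+4\log(n/\delta)\le 2p+8\log(n/\delta)\le 2p+16\log(n/\delta),
\end{equation*}
which yields exactly the claimed bound.

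There is no substantial obstacle here; the only mildly delicate point is to keep track of the two regimes in the chi-squared tail (Gaussian-like for small deviations, exponential for large ones) so that the logarithmic term in the final bound has the correct order. Everything else is a direct application of tools already stated in the paper.
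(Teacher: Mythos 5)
Your proof is correct and follows essentially the same route as the paper: identify $(\bXi\bfPi)_i=\bxi_i-\bar\bxi$ as a centred Gaussian with covariance $\frac{n-1}{n}\bfI_p$, so its squared norm is a (scaled) $\chi^2_p$ variable, then apply \Cref{lemme_chideux} together with a union bound over $i\in[n]$. The only difference is that you spell out the choice of $x$ in the two tail regimes explicitly, which the paper leaves implicit, and your constants ($2p+8\log(n/\delta)$) are in fact slightly sharper than the stated bound.
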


\begin{proof}
	We first notice that
	\begin{equation}
		(\bXi\bfPi)_i = \bxi_i - \frac1n\sum_{j=1}^n \bxi_j\ \simiid\ 
		\mathcal{N}\big(0,{\textstyle\frac{n-1}{n}}\bfI_p\big).
	\end{equation}
	This implies that the random variable
	$\big\|(\bXi\bfPi)_i\big\|_2^2$ is drawn from the $\frac{n-1}{n} \chi^2_p$ 
	distribution, and the result follows from~\Cref{lemme_chideux}.
\end{proof}

\begin{lemma}[Corollary~5.35 in~\citet*{vershynin_2012}]\label{lemma_Wishart1}
	Assume that $\bfA$ is a $N\times n$ random matrix with independent standard Gaussian entries.
	Then, for any $t\ge 0$,  with probability at least $1-2e^{-t^2/2}$, it holds that
	\begin{equation}
	\|\bfA\| \le \sqrt{N}+\sqrt{n}+t.
	\end{equation}
\end{lemma}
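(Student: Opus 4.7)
The plan is to combine two classical ingredients: a variance-comparison inequality (Slepian--Gordon) to control the expected operator norm, and Gaussian concentration of Lipschitz functions to upgrade an expectation bound into a high-probability bound. Write $\|\bfA\|=\sup_{x\in S^{n-1},\,y\in S^{N-1}} \langle y,\bfA x\rangle$, and view the right-hand side as the supremum of the centered Gaussian process $X_{x,y}=\langle y,\bfA x\rangle$ indexed by $(x,y)\in S^{n-1}\times S^{N-1}$.

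First I would bound $\esp\|\bfA\|$. A direct computation using independence of the entries shows that the increments of $X$ satisfy
\begin{equation*}
\esp\bigl[(X_{x,y}-X_{x',y'})^2\bigr]\le \|x-x'\|_2^2+\|y-y'\|_2^2.
\end{equation*}
Introduce an auxiliary process $Y_{x,y}=\langle g,x\rangle+\langle h,y\rangle$, where $g\sim\calN(\mathbf 0_n,\bfI_n)$ and $h\sim\calN(\mathbf 0_N,\bfI_N)$ are independent. Then the increments of $Y$ equal exactly $\|x-x'\|_2^2+\|y-y'\|_2^2$. Slepian's inequality (in its Gordon form, here only the one-sided comparison is needed) therefore yields
\begin{equation*}
\esp\sup X_{x,y}\le \esp\sup Y_{x,y}=\esp\|g\|_2+\esp\|h\|_2\le \sqrt n+\sqrt N,
\end{equation*}
using Jensen on $\esp\|g\|_2\le(\esp\|g\|_2^2)^{1/2}=\sqrt n$ and similarly for $h$.

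Next I would add the concentration step. The map $\bfA\mapsto\|\bfA\|$ is $1$-Lipschitz with respect to the Frobenius (Euclidean) norm on the entries, since
$\bigl|\|\bfA\|-\|\bfB\|\bigr|\le\|\bfA-\bfB\|\le\|\bfA-\bfB\|_F$. The Cirel'son--Ibragimov--Sudakov Gaussian concentration inequality applied to a $1$-Lipschitz function of $Nn$ i.i.d.\ standard Gaussians gives, for every $t\ge 0$,
\begin{equation*}
\prob\bigl(\|\bfA\|\ge\esp\|\bfA\|+t\bigr)\le e^{-t^2/2}.
\end{equation*}
Combining with the expectation bound established above yields $\|\bfA\|\le\sqrt N+\sqrt n+t$ with probability at least $1-e^{-t^2/2}$; the stated factor $2$ in $1-2e^{-t^2/2}$ is a harmless slackening (and could equivalently be obtained by using the two-sided Gaussian concentration inequality, which already carries a factor $2$).

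There is no genuine obstacle: both the Slepian--Gordon comparison and Gaussian concentration for Lipschitz functions are textbook tools. The only point deserving some care is the computation of the two processes' increments---specifically, checking the inequality $\esp[(X_{x,y}-X_{x',y'})^2]\le\|x-x'\|_2^2+\|y-y'\|_2^2$, which relies on $\|x\|_2=\|x'\|_2=\|y\|_2=\|y'\|_2=1$ to cancel a cross term---after which the remainder of the argument is mechanical.
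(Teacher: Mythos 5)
Your argument is correct, and it is essentially the proof behind the result the paper simply cites: the paper gives no proof of its own, invoking Corollary~5.35 of \citet{vershynin_2012}, which is established exactly as you do, by a Sudakov--Fernique/Gordon comparison giving $\esp\|\bfA\|\le\sqrt{N}+\sqrt{n}$ followed by Gaussian concentration for the $1$-Lipschitz map $\bfA\mapsto\|\bfA\|$. Your increment computation and the observation that the one-sided bound $e^{-t^2/2}$ is even slightly stronger than the stated $2e^{-t^2/2}$ are both accurate, so nothing is missing.
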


We deduce from this the following lemma.

\begin{lemma}\label{lemma_Wishart2}
	If $\bfA$ is a $N\times n$ random matrix with independent standard Gaussian
	entries, then
	$\esp[\|\bfA\|^2] \le 3N+3n+12$.	
\end{lemma}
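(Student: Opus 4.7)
The plan is to deduce the bound in expectation from the high-probability tail bound of \Cref{lemma_Wishart1} by integrating the tail. The key observation is that \Cref{lemma_Wishart1} can be rewritten as a concentration statement for the centered quantity $X = \bigl(\|\bfA\| - \sqrt{N}-\sqrt{n}\bigr)_+$, namely $\prob(X > t) \le 2e^{-t^2/2}$ for all $t \ge 0$. First I would use the formula $\esp[X^2] = \int_0^\infty \prob(X^2 > u)\,du$ and the change of variables $u = t^2$ to obtain
\begin{equation}
\esp[X^2] \le \int_0^\infty 2e^{-u/2}\,du = 4.
\end{equation}

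Next I would transfer this moment bound on the positive part back to a moment bound on $\|\bfA\|^2$ itself. The elementary inequality
\begin{equation}
a^2 \le (1+\epsilon)\,b^2 + (1+\epsilon^{-1})(a-b)_+^2, \qquad a\ge 0,\ b\ge 0,\ \epsilon>0,
\end{equation}
(which follows from $a \le b + (a-b)_+$ and the Cauchy–Schwarz type splitting of a square) applied to $a = \|\bfA\|$, $b = \sqrt{N}+\sqrt{n}$ with the choice $\epsilon = 1/2$ yields
\begin{equation}
\esp[\|\bfA\|^2] \le \tfrac{3}{2}\bigl(\sqrt{N}+\sqrt{n}\bigr)^2 + 3\,\esp[X^2] \le \tfrac{3}{2}\bigl(\sqrt{N}+\sqrt{n}\bigr)^2 + 12.
\end{equation}

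Finally I would bound $\bigl(\sqrt{N}+\sqrt{n}\bigr)^2 = N + n + 2\sqrt{Nn} \le 2(N+n)$ by the AM–GM inequality, which gives $\tfrac{3}{2}\bigl(\sqrt{N}+\sqrt{n}\bigr)^2 \le 3N+3n$ and hence $\esp[\|\bfA\|^2] \le 3N+3n+12$, as claimed. There is no real obstacle here: the proof is a routine conversion of an exponential tail bound into a second-moment bound, and the slight inflation factor $3/2$ in the splitting inequality is precisely what allows the factor $2\sqrt{Nn}$ in the cross term to be absorbed into $3N+3n$ via AM–GM.
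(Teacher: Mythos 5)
Your proof is correct and follows essentially the same route as the paper: both results are obtained by integrating the Gaussian tail bound of \Cref{lemma_Wishart1}. The only cosmetic difference is that you first bound $\esp[(\|\bfA\|-\sqrt{N}-\sqrt{n})_+^2]\le 4$ and then absorb the cross term via the $(1+\epsilon)$-split and AM--GM, whereas the paper truncates directly at $3(N+n)$ and integrates the tail beyond that threshold; the resulting constant $3N+3n+12$ is the same.
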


\begin{proof}
	It is clear that
	\begin{align}
	\esp[\|\bfA\|^2] &\le 3(N+n) + \int_0^\pinf \prob(\|\bfA\|^2>3(N+n)+x)\, dx \\
	&\le 3(N+n) + 12\,\int_0^\pinf te^{-t^2/2}\, dt.
	\end{align}
	The result follows from the fact that the last integral is equal to one.
\end{proof}

\section*{Acknowledgments}
O.\ Collier's research has been conducted as part of the project Labex MME-DII (ANR11-LBX-0023-01).
The work of A. Dalalyan was partially supported by the grant Investissements d'Avenir
(ANR-11IDEX-0003/Labex Ecodec/ANR-11-LABX-0047).

\bibliography{refs}

\end{document}